\definecolor{Nblue}{HTML}{0080ff}
\definecolor{NewGreen}{rgb}{0, 0.501, 0}
\definecolor{Red1}{rgb}{0.858, 0.188, 0.478}
\definecolor{BlueNice1}{rgb}{0.243137, 0.396078, 0.709804}
\definecolor{BlueNice2}{rgb}{0.356863, 0.588235, 0.815686}
\definecolor{RedNice1}{rgb}{0.8, 0.262745, 0.231373}
\definecolor{RedNice2}{rgb}{0.67451, 0.207843, 0.184314}
\definecolor{GreenNice1}{rgb}{0.0980392, 0.631373, 0.152941}
\newtheorem{lemma}{Lemma}
\newtheorem{example}{Example}
\newtheorem{theorem}{Theorem}
\newtheorem{remark}{Remark}
\newtheoremstyle{claim}
{\topsep}
{\topsep}
{}
{}
{\itshape}
{.}
{.5em}
{\thmname{#1}\thmnumber{ #2}\thmnote{ (#3)}}
\theoremstyle{claim}
\titleformat{\subsection}[runin]
{\normalfont\large\bfseries}{\thesubsection}{1em}{}
\titleformat*{\subsection}{\normalfont\bfseries}
\titleformat{\subsubsection}[runin]
{\normalfont\large\bfseries}{\thesubsubsection}{1em}{}
\titleformat*{\subsubsection}{\normalfont\bfseries}
\def\gavg{\tau_{\rm m}}
\def\r0{\mathcal{R}_0}
\newcommand{\medcap}{\mathbin{\scalebox{1.4}{$\cap$}}}
\newcommand{\medcup}{\mathbin{\scalebox{1.4}{$\cup$}}}
\def\gavg{\tau_{\rm m}}
\def\var{\xi}
\def\sinhc{{\rm sinhc}}
\def\cschc{{\rm cschc}}
\def\sinc{{\rm sinc}}
\def\tauA{\tau_1} 
\def\tauB{\tau_2} 
\def\ball{\mathcal{B}}
\def\sigmaeps{\mathcal{P}^{\epsilon}} 
\def\pieps{\mathcal{R}^{\epsilon}} 
\def\seps{S_{\epsilon}} 
\date{}
\title{Influence of Large Mean Delay on Distributed Delay Differential Equations Dynamics: Application to a Neural Mass Model}
\date{ }
\author{
Isam Al-Darabsah\footnote{Department of Mathematics and Statistics, Faculty of Science and Arts, Jordan University of Science and Technology, P.O. Box 3030, Irbid 22110, Jordan.}\textsuperscript{ ,}\footnote{Corresponding author.}\textsuperscript{ ,}{$^\|$}
\hspace{0.5cm}
Sue Ann Campbell\footnote{Department of Applied Mathematics and Centre for Theoretical Neuroscience, University of Waterloo, Waterloo, Canada.}\textsuperscript{ ,}{$^\|$}
 \hspace{0.5cm}
Bootan Rahman\footnote{Mathematics Unit, School of Science and Engineering, University of Kurdistan Hewler (UKH), Erbil 44001, Iraq.}\textsuperscript{ ,}\footnote{Center for Applied Mathematics and Bioinformatics (CAMB), Gulf University for Science and Technology, 32093, Hawally, Kuwait.}
\textsuperscript{ ,}\footnote{Email addresses: imaldarabsah@just.edu.jo (I. Al-Darabsah), sacampbell@uwaterloo.ca (S.A. Campbell), bootan.rahman\linebreak @ukh.edu.krd (B. Rahman).}
}
\begin{document}

\maketitle
 
\begin{abstract} 
Delay differential equations (DDEs) with large delays play a pivotal role in understanding stability and bifurcations in systems ranging from neural networks to laser dynamics. While prior work has extensively studied DDEs with discrete delays, the impact of distributed delays has been less explored. This paper investigates the spectrum of linear DDEs with a uniformly distributed delay kernel, with mean delay $ \gavg$ and 
a half-width of $\rho$. 
When  $\gavg\to\infty$, 
we carry out asymptotic analysis and show that the spectrum splits into (i) a strong critical spectrum referring to a finite set of isolated, pure imaginary eigenvalues that are unaffected by delay, (ii) an asymptotic strong spectrum consisting of a finite set of eigenvalues with limits that are determined by non-delayed terms in the model, and (iii) a pseudo-continuous spectrum consisting of infinitely many eigenvalues that limit on the imaginary axis, with real parts that scale linearly with the delay. 
Although this behavior is similar to the fixed delay case, the distributed delay introduces additional spectral features, including an infinite countable number of horizontal asymptotes in the pseudo-continuous spectrum at frequencies $\omega= k\pi/\rho$, where $k\in \mathbb{Z}\setminus\{0\}$. We validate our theoretical result through several examples and compare our findings with fixed-delay results from the literature.
Finally, we apply the results to study the stability and bifurcations of a Wilson-Cowan model with a delayed self-coupling, large mean delay, and homeostatic plasticity.
\end{abstract}

{\bf Keywords}:  Large delay  $\cdot$  distributed delay $\cdot$ stability

\section{Introduction}\label{section_introduction}

Time-delayed dynamical systems have been widely studied as a result of their relevance in various fields such as physics, biology, and engineering. The inclusion of delay in differential equations introduces complex behaviors and high dimensions, often leading to different instability phenomena. In particular, large mean self-coupling delays play a crucial role in shaping the long-term behavior of a single dynamical node, which can be observed in laser systems with delayed feedback, population dynamics, and neural networks \cite{wolfrum2006eckhaus, heil2001chaos, carr1993processing, giacomelli1998multiple}.
In scalar systems, only the presence of delays can induce non-trivial dynamics. For example, models of population regulation or circadian rhythms exhibit oscillations solely due to time-delayed feedback \cite{may2007theoretical, mackey1977oscillation}. In contrast, for systems with multiple variables, the delay terms interact with the intrinsic dynamics 
and can lead to multistability, complex bifurcation structures, and chaotic dynamics \cite{kuang1993delay, diekmann1995delay}. 
Moreover, analytical and numerical studies have also demonstrated that time delays can significantly alter stability regions and oscillatory thresholds, leading to phenomena such as Hopf bifurcations and complex attractors \cite{hale1993introduction, insperger2011semi,al2020}. This potential for rich dynamics makes it essential to include delays when modeling feedback mechanisms in engineering control, neural activity, and epidemiological models.

Delay differential equations (DDEs) characterized by large delays appear in many applications, including neural network models, laser dynamics, and population dynamics. For example, the authors in \cite{al2020} studied two identical weakly connected oscillators with time-delayed coupling. They showed that a large delay leads to the existence of in-phase and anti-phase solutions and the occurrence of various bifurcations. In \cite{marino2019excitable}, a semiconductor laser model with two large time delays was investigated. The authors assumed two hierarchically long-delayed feedback loops on an excitable semiconductor laser, which led to complex spatio-temporal dynamics for wave propagation patterns. The authors in \cite{otto2012delay} studied passively mode‑locked semiconductor lasers under long optical feedback delays. They incorporated a delay much larger than the internal pulse period and found that long feedback delays induce complex dynamics, including pulse timing jitter and bistability. The pattern formation in a semiconductor laser model with two long-delayed optical feedbacks was studied in \cite{yanchuk2014pattern}. In \cite{kashchenko2024logistic}, the author proposed a logistic equation with a large delay in a feedback loop and investigated how the dynamics change with increasing delay. The authors in \cite{hasan2023stability} studied an optoelectronic oscillator model with a large delay due to fiber-optic-based delay lines.

While DDEs with a small delay can often be treated as perturbations to ordinary differential equations (ODEs), a large delay is a qualitatively different regime. It is of significant theoretical and practical importance, as they represent the infinite-dimensional nature of the DDEs and show more dynamic complexity. For instance, when the time delay becomes very large and approaches infinity, it exhibits dynamics on multiple time scales, allows for the coexistence of aperiodic solutions, leads to an increase in the dimensionality of unstable manifolds and chaotic attractors, and shows hyperchaos with several positive Lyapunov exponents \cite{wolfrum2010complex,lichtner2011spectrum}.
Yanchuk et al.~have developed a theory for linear DDEs in the singular limit of a large delay, identifying two distinct components of the spectrum: the strong spectrum, governed by the non-delayed terms, and the asymptotically continuous spectrum, emerging from the delayed term and clustering near the imaginary axis as the delay increases \cite{yanchuk2006control, wang2024universal}. This framework has allowed a rigorous classification of delay-induced instabilities and the derivation of amplitude equations near bifurcation points \cite{wolfrum2006eckhaus, kozyreff2023multiple,yanchuk2017spatio}.
This spectral dichotomy has been used to analyze laser dynamics \cite{lichtner2011spectrum}, control schemes \cite{yanchuk2006control},  and neural synchronization \cite{yanchuk2005properties}. For example, Yanchuk et al. \cite{yanchuk2006control} showed that delayed feedback control of unstable steady states fails for large delay unless the system is near a bifurcation threshold, while Wolfrum and Yanchuk \cite{wolfrum2006eckhaus} linked the pseudo-continuous spectrum to the Eckhaus instability, a pattern-forming phenomenon in spatially extended systems.
For multiple delays, Ruschel and Yanchuk showed in \cite{Stefan2021} that the characteristic spectrum of linear DDEs with multiple large hierarchical delays splits into the strong and asymptotically continuous spectra. Moreover, they constructed spectral manifolds corresponding to each delay timescale and used them to determine stability and bifurcation behavior in the large‑delay limit.

The analysis of DDEs with discrete delays has received considerable attention and it becomes critical to understand how large delays influence the global and local behavior \cite{lichtner2011spectrum,wolfrum2010complex,yanchuk2006control, wang2024universal}. 
However, many real systems do not exhibit a single, fixed delay but rather a distribution of delays due to variability in transmission times or processing lags. For instance, synaptic transmission in neural systems, signal routing in networks, and ecological interactions often introduce distributed rather than discrete delays \cite{al2024distributed, carr1993processing}. Yet, the extension of the spectral framework to such systems, especially in the case of large mean delay, has not been systematically developed. 
In this work, we investigate the spectrum of linear DDEs with a uniformly distributed delay kernel, with mean delay $ \gavg$ and a half-width of $\rho$. 
Through asymptotic analysis when $\gavg\to\infty$ 
we show that the characteristic spectrum still splits into a strong critical spectrum, an asymptotic strong spectrum, and a pseudo-continuous spectrum, similar to the fixed delay case.
However, the distributed delay introduces additional spectral features, including an infinite countable number of horizontal asymptotes in the pseudo-continuous spectrum at frequencies $\omega= k\pi/\rho$, where $k\in \mathbb{Z}\setminus\{0\}$.  We also emphasize that wider distributions require larger mean delays for the asymptotic approximation to be accurate.

The paper is organized as follows. In Section \ref{section_preliminary}, we first present the necessary mathematical notation and key definitions. Then, we provide the main theorems for the spectrum of linear DDEs with a uniformly distributed delay kernel and derive the scaling properties of the spectrum for a large mean delay. 
Section \ref{section_examples} presents examples to illustrate our results in different cases and to compare our findings with existing fixed-delay results in the literature. 
In Section \ref{section_model}, we apply the results and study the dynamics of a single-node population model of excitatory and inhibitory neurons with a large mean delay.
Section \ref{sec_discussion}  provides a discussion of the results.
The Appendix contains the proofs of the properties of the asymptotic spectrum.

\section{Preliminaries and Main Results}\label{section_preliminary}

Solving nonlinear mathematical models is usually difficult; hence, analyzing their linearization at equilibrium points is a foundational technique to study the local stability, exhibit the oscillatory behaviors, and determine parameter thresholds at which qualitative changes, like bifurcations, occur. In this section, we study the spectrum  of the eigenvalues of a linear delay differential equation system derived from a nonlinear model with a uniform distribution kernel when the mean delay is large. Many nonlinear systems, such as neural field models or laser equations with feedback, undergo instabilities that are first detected through the eigenvalues of their linearization. Therefore, a precise characterization of the spectrum in the large-delay regime provides direct insight into when and how nonlinear oscillations, multistability, or chaotic regimes may arise. In particular, extending spectral theory from the classical fixed-delay case to the more realistic setting of distributed delays is essential for understanding real-world systems where signal transmission times are inherently variable.

In a series of papers, \cite{yanchuk2006control,lichtner2011spectrum}, Yanchuk and collaborators studied the eigenvalue spectrum of the following linear delay differential equation
\begin{equation}\label{model_linear_discrete}
    \frac{du}{dt}=A\,u(t)+ B\,u(t-\tau).
\end{equation}
In the limit $\tau\rightarrow\infty$, they showed that the spectrum, $\sigmaeps$, is comprised of two parts: the {\em strong spectrum}, $\sigmaeps_s$, and the {\em asymptotically continuous spectrum}, $\sigmaeps_{c}$. The eigenvalues of $\sigmaeps_s$ approach fixed values as $\tau_m\rightarrow\infty$ that can be determined from the matrix $A$. The eigenvalues of $\sigmaeps_c$ approach the imaginary axis as $\tau\rightarrow\infty$. They further showed each of these latter eigenvalues satisfies the asymptotic scaling
$\lambda= \gamma/\tau+i\omega +O\left(1/\tau^2\right),\ \text{for}\,\ \gamma, \omega\in\mathbb{R}$
and derived expressions for curves on which the scaled real part $\gamma$ lies. It is the purpose of this paper to extend these results to the case of a uniform distributed delay. In particular, we will show similar results hold with the discrete delay replaced by the mean delay of the distribution, if the variance is fixed and sufficiently small. 

Consider the linear delay differential equations  
of the form
\begin{equation}\label{model_linear_prel}
    \frac{du}{dt}=A\,u(t)+ B\,\int_{0}^{\infty}\,g(s)\, u(t-s)\, ds
\end{equation}
 according to a uniform distribution kernel 
\begin{equation}\label{unieq1}
	g(s)=\left\{\begin{array}{ccc}
		\displaystyle{\frac{1}{2\rho} },& \mbox{for} &  \gavg-\rho \leq s \leq \gavg+\rho, \\ \\
		\displaystyle{0},& & \mbox{otherwise},
	\end{array}\right. 
\end{equation}
which has the mean time delay $ \gavg$ and the variance  $\var= {\rho^2}/{3}$. The parameter $\rho$ controls the width and height of the distribution and must satisfy $0< \rho\le \tau_m$. 
Here $u \in \mathbb{R}^n$ and $A, B \in \mathbb{R}^{n \times n}$. 
  We fix $\rho\ll \gavg$ for sufficiently large $\gavg$.

Taking the Laplace transform of equation  \eqref{model_linear_prel} with an initial value of zero leads to the  characteristic equation
\begin{equation}\label{charact_eq_tau}
    \Delta(\lambda):=\det\left( \lambda I_n-A-B\,e^{-\lambda\gavg}\,\sinhc(\lambda\rho)\right)=0
\end{equation}
where $I_n$ is the $n\times n$ identity matrix  and
\[ \sinhc(x)=\left\{ \begin{array}{cc}
      \frac{\sinh (x)}{x},&\mbox{if } x\ne 0 \\
       1,& \mbox{if } x=0
  \end{array}\right.\]

We show in Appendix~\ref{sec:appendix} that the behaviour of the eigenvalues of \eqref{charact_eq_tau} as $\gavg\rightarrow\infty$ is as described by \cite{yanchuk2006control,lichtner2011spectrum}. Specifically, all eigenvalues lie in one of the following sets.
\begin{enumerate}
  \item[(i)] The strong critical spectrum $\mathcal{A}_0$, which has pure imaginary eigenvalues that do not depend on $\tau_m$.
    \item[(ii)] The strong spectrum $\mathcal{P}^\epsilon_s$,  which is a finite set of eigenvalues that approach some fixed value as  $\gavg\rightarrow\infty$. The set of limiting values of these eigenvalues is called the asymptotic strong spectrum, $\mathcal{A}_s$, which consists of two subsets: the asymptotic strong unstable spectrum, $\mathcal{A}_+$, and asymptotic strong stable spectrum, $\mathcal{A}_-$, that, respectively, contain eigenvalues with positive and negative real parts.   
 \item[(iii)] The pseudo-continuous spectrum $\mathcal{P}_c^{\epsilon}$, which has eigenvalues  with real parts that scale as $\epsilon=1/\gavg$, $0<\epsilon\ll 1$, that is, for $\lambda\in \mathcal{P}_c^{\epsilon}$, we have
\[\lambda=\epsilon \gamma+i\omega +O\left(\epsilon^2\right),\qquad \text{for}\,\gamma, \omega\in\mathbb{R},\gamma\ne 0\] 
Note that these eigenvalues all approach the imaginary axis as $\epsilon\rightarrow 0$ i.e., $\gavg\rightarrow\infty$.
\end{enumerate}
To describe these sets further, we will decompose the system \eqref{model_linear_prel}. 
Assume with a suitable change of coordinates, the matrix $B$ can be written as 
\begin{equation}\label{matrix_B}
B=\left[\begin{array}{ll}
0 & 0 \\
0 & \widebar{B}
\end{array}\right],    
\end{equation}
where $\widebar{B} \in \mathbb{R}^{d \times d}$ is an invertible matrix with  $1 \leq d \leq n$. Then, we can decompose the matrix $A$ as 
\begin{equation}\label{matrix_A}
A=\left[\begin{array}{ll}
A_1 & A_2 \\
A_3 & A_4
\end{array}\right],
\end{equation}
where   $A_1 \in \mathbb{R}^{(n-d) \times(n-d)}$ and $A_4 \in \mathbb{R}^{d \times d}$. 
Finally,  equation \eqref{charact_eq_tau} can be written as
\begin{equation}\label{model_linear_A_stable}
   \Delta(\lambda)=\det\left( \begin{bmatrix}
\lambda I_{n-d} - A_1 &  -A_2 \\
-A_3 & \lambda I_d - A_4 - \widebar{B}\,e^{-\lambda\gavg}\,\sinhc(\lambda\rho)
\end{bmatrix} \right) = 0,
\end{equation}
where $I_{n-d}$ and $I_d$ are identity matrices of sizes $n-d$ and $d$, respectively. Here, the matrix $A_1$ (resp. $A_4$) represents the subsystem that is not directly influenced by the delay  (resp. influenced by the delay). The off-diagonal matrices  $A_2$ and $A_3$ couple the two subsystems. Further, let $\sigma(M)$ denote the spectrum of a matrix $M$.
\subsection{Strong critical spectrum.}
Based on the definition above, we have 
\[ \mathcal{A}_0:=\left\{i\omega\in\mathbb{C} \,\mid\,  \Delta(i\omega)=0,\ \forall \tau_m\ge 0\right\}.\]
Simple calculations show that there are two ways that this can occur. First, if $\sinhc(i\omega\rho)=\sinc(\omega\rho)=0$ and $i\omega\in\sigma(A)$, then clearly $i\omega\in\mathcal{A}_0$.
Second, if the two subsystems are decoupled, i.e., either $A_2=0$ or $A_3=0$, then 
\[\Delta(\lambda)=\det(\lambda I_{n-d} - A_1)\det(\lambda I_d - A_4 - \widebar{B}\,e^{-\lambda\gavg}\,\sinhc(\lambda\rho))=0.\]
In this case, $i\omega\in\sigma(A_1)$ implies $i\omega\in\mathcal{A}_0$.
In summary, we have
\[\mathcal{A}_0:=\left\{i\omega\in\mathbb{C} \,\mid\,i\omega\in\sigma(A) \mbox{ and } i\omega\in \sigma(A_1) \mbox{ or } \omega =k\pi/\rho,\ k\in\mathbb{Z}\setminus \{0\}\right\}.\]

Note that the first situation can only occur in the system with uniform distributed delay, while the second is identical for the system with fixed and distributed delay. Thus   $\mathcal{A}_0$ for a system with a fixed delay is a subset of that for the same system with a uniform distributed delay.

\subsection{Asymptotic strong  spectrum.}\label{asymptotic_spectrum}
 Theorem~\ref{thm_append_1} proves that asymptotic strong spectra are given as follows.
 \begin{itemize}
 \item[] Asymptotic strong unstable spectrum:
 \begin{equation}
\mathcal{A}_{+}:=\{\lambda\in\sigma(A) \, \mid   \, \Re(\lambda)>0\}.
\label{A+def}
\end{equation}
 \item[] Asymptotic strong stable spectrum:
 \begin{equation}
\mathcal{A}_{-}:=\{\lambda\in\sigma(A_1) \, \mid   \, \Re(\lambda)<0\}.
\label{A_def}
\end{equation}
  \item[] Asymptotic strong  spectrum:
   \[
\mathcal{A}_s=\mathcal{A}_{+} \cup \mathcal{A}_{-}.
\]
\end{itemize}
These can be understood heuristically as follows.

Let $\lambda=\lambda_0+\lambda_{\gavg}$ be an eigenvalue of \eqref{charact_eq_tau}  where $\lambda_{\gavg}\rightarrow 0$ and ${\gavg}\lambda_{\gavg}\rightarrow K<\infty$ as ${\gavg}\rightarrow\infty$. 
If $\Re(\lambda_0)>0$, then it follows  from equation \eqref{model_linear_A_stable} that $\det (\lambda_0 I-A)=0$ as $\gavg\to \infty$ because $e^{-\lambda\tau_m}\rightarrow 0$. 
If $\Re(\lambda_0)<0$, note that \eqref{model_linear_A_stable} is equivalent to
\begin{equation}\label{model_linear_A_ustable_1}
\det\left( \begin{bmatrix}
\lambda I_{n-d} - A_1 &  -A_2 \\
-A_3\,e^{\lambda\gavg} & (\lambda I_d - A_4)\,e^{\lambda\gavg} - \widebar{B}\,\sinhc(\lambda\rho)
\end{bmatrix} \right) = 0.
\end{equation}
Thus  as  $\gavg\to \infty$, we have
\begin{equation}\label{model_linear_A_ustable}
  \det\left( \begin{bmatrix}
\lambda_0 I_{n-d} - A_1 &  -A_2 \\
0 &  - \widebar{B}\,\sinhc(\lambda_0\rho)
\end{bmatrix} \right) = 0
\end{equation}
because $e^{\lambda\tau_m}\rightarrow 0$, which is satisfied if and only if $\det\left(\lambda_0 I_{n-d} - A_1\right)=0$.

Note that the asymptotic strong spectra are the same for a fixed delay as a for a uniform distribution of delays. This would be true for any distribution of delays.

\subsection{Pseudo-continuous spectrum.}\label{pseudo_spectrum}

To study the  pseudo-continuous spectrum $\mathcal{P}_c^{\epsilon}$, we find an  asymptotic  continuous spectrum $\mathcal{A}_c$ such that $\Re(\overline{\lambda})\approx\epsilon\,\Re(\tilde{\lambda})$ and $\Im(\overline{\lambda})\approx \Im(\tilde{\lambda})$ for any $\overline{\lambda}\in \mathcal{P}_c^{\epsilon}$ and $\tilde{\lambda}\in \mathcal{A}_c$. To this end, let 
\[
\lambda=\frac{\gamma}{\gavg}+i\omega=\epsilon\gamma+i\omega,\qquad \gamma, \omega\in\mathbb{R},\gamma\ne 0.
\] 
It then follows from equation \eqref{charact_eq_tau} that 
\begin{equation} \label{eq_111}
  \det\left([\epsilon\gamma+i \omega] I_n-A-B\,e^{-\gamma}\,e^{-i\gavg\omega}\,\sinhc( \rho(\epsilon\gamma+i \omega)\right)=0.
\end{equation}
 Note that 
\begin{equation}\label{eq_O2_sinhc}
    \sinhc(\rho(\epsilon\gamma+i \omega))=
\frac{\sin (\rho\omega)}{\rho\omega}+
i \epsilon \rho\gamma f_1(\omega)
+\gamma ^2 \epsilon ^2 \rho ^2 f_2(\omega)
+O\left(\epsilon ^3\right),
\end{equation}
where
\[
f_1(\omega)=\frac{\sin(\rho\omega)-\rho\omega\cos(\rho\omega)}{  \rho^2\omega^2} \qquad \text{and} \qquad 
f_2(\omega)=\frac{\rho ^2 \omega ^2 \sin (\rho  \omega )-2 \sin (\rho  \omega )+2 \rho  \omega  \cos (\rho  \omega )}{2 \rho^3  \omega ^3}.
\]
For any fixed $\rho>0$ and $\omega\in\mathbb{R}$, we have $|f_1(\omega)|\le 0.4361$ and $-0.1243\le f_2(\omega)< 1/6$. 
Let $\varphi=\gavg\omega$. Then, for sufficiently small $0<\epsilon\ll1$, to leading order, we have
 \begin{equation}\label{eq_zzzzq}
  \det\left(i \omega I_n-A-B\,e^{-(\gamma+i\varphi)}\,\sinc( \rho\omega)\right)=0
\end{equation}
where
 \[ \sinc(x)=\left\{ \begin{array}{cc}
      \frac{\sin(x)}{x},&\mbox{if } x\ne 0 \\
       1,& \mbox{if } x=0.
  \end{array}\right.\]

  \begin{remark}\label{remark_rho}
  Recall that the model is well defined only if $\rho<\tau_m$. Thus  given fixed $\rho>0$, for the expansion above to be valid we require $\epsilon<1/\rho$ and $\epsilon$ sufficiently small such that $0.4361\rho =O(1)$ with respect to $\epsilon$, which will be satisfied if $\epsilon\ll 3/\rho$ 
\end{remark}

Note that for large values of $\gavg$, the term $e^{-i\varphi}$ oscillates rapidly with respect to $\omega$. However, for any fixed $\epsilon$ sufficiently small,  $\gamma$ will satisfy \eqref{eq_zzzzq} to $O(\epsilon)$ for some value of $\varphi$. Furthermore, the Euler formula can eliminate $\varphi$  and express $\gamma$ as a function of $\omega$. Considering this, we treat $\varphi$ as an auxiliary phase parameter and seek the solution to equation \eqref{eq_zzzzq} as a curve in the $(\gamma, \omega)$-plane.
Thus  we define the following polynomial in $x$
\[
p_\omega(x)= \det\left(i \omega I_n-A-x\,\sinc( \rho\omega)\,B\right), 
\]
and the set
\begin{equation}\label{set_1}
    \mathcal{W}=\left\{\omega\in\mathbb{R}\mid i\omega\notin\sigma(A_1)\text{ and } \rho\omega\neq k\pi\text{ for } k\in \mathbb{Z}\setminus\{0\}\right\}.
\end{equation}

\begin{theorem}\label{Thm_App_1}
  The degree of $p_\omega$ equals $\operatorname{rank} (B)=d$ if and only if $\omega\in\mathcal{W}$.
\end{theorem}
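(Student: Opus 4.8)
The plan is to exploit the fact that $p_\omega$ is the determinant of a matrix that is \emph{affine} in $x$ and in which $x$ enters only through the last $d$ columns, so that the degree is automatically at most $d$; the whole theorem then reduces to computing the coefficient of $x^d$ and showing it is a nonzero scalar multiple of $\det(i\omega I_{n-d}-A_1)$ exactly when $\sinc(\rho\omega)\neq 0$. First I would fix $\omega$, abbreviate $c=\sinc(\rho\omega)\in\mathbb{R}$ and $N=i\omega I_n-A$, and write $p_\omega(x)=\det(N-xcB)$. Since $B$ has its only nonzero block $\widebar{B}$ in the last $d$ rows and columns, the variable $x$ appears only in the last $d$ columns of $N-xcB$; expanding the determinant by multilinearity in the columns, any degree-$k$ monomial in $x$ must take its $x$-factors from $k$ distinct columns among those last $d$. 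Hence $\deg_x p_\omega\le d$, and the coefficient of $x^d$ is obtained by selecting the $-xc\,B$-contribution in \emph{every} one of the last $d$ columns and the plain column of $N$ in each of the first $n-d$.

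The crux is then to evaluate this leading coefficient. Carrying out the selection above, and using that the last $d$ columns of $B$ equal $\left[\begin{smallmatrix}0\\ \widebar{B}\end{smallmatrix}\right]$ while the first $n-d$ columns of $N$ equal $\left[\begin{smallmatrix} i\omega I_{n-d}-A_1\\ -A_3\end{smallmatrix}\right]$, I would obtain
\[
[x^d]\,p_\omega(x)=(-c)^d\,\det\!\begin{bmatrix} i\omega I_{n-d}-A_1 & 0\\ -A_3 & \widebar{B}\end{bmatrix}.
\]
The key observation is that this matrix is block lower triangular, so its determinant factors, giving
\[
[x^d]\,p_\omega(x)=(-\sinc(\rho\omega))^{d}\,\det(i\omega I_{n-d}-A_1)\,\det(\widebar{B}).
\]

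Finally I would read off the conclusion. Because $\widebar{B}$ is invertible, $\det(\widebar{B})\neq 0$, so the leading coefficient is nonzero if and only if $\sinc(\rho\omega)\neq 0$ \emph{and} $\det(i\omega I_{n-d}-A_1)\neq 0$. Recalling that $\sinc$ vanishes exactly at nonzero integer multiples of $\pi$ (with $\sinc(0)=1$), the first condition is $\rho\omega\neq k\pi$ for $k\in\mathbb{Z}\setminus\{0\}$, and the second is $i\omega\notin\sigma(A_1)$; together these are precisely the defining conditions of $\mathcal{W}$. Since $\deg_x p_\omega\le d$ always holds, the degree equals $d$ iff the coefficient of $x^d$ is nonzero, i.e.\ iff $\omega\in\mathcal{W}$. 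I do not expect a serious obstacle: the argument is essentially a careful multilinear-expansion bookkeeping step, and the only points needing attention are verifying that the $x^d$ term really isolates the block lower-triangular determinant above, and checking the degenerate cases $\omega=0$ (where $\sinc(0)=1\neq0$) and $d=n$ (where $A_1$ is empty, $\det(i\omega I_0-A_1)=1$, and the condition $i\omega\notin\sigma(A_1)$ is vacuous) against the stated conventions.
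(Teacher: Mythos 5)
Your proof is correct, and it follows the same overall strategy as the paper: bound the degree of $p_\omega$ by $d$ using the block form of $B$, compute the leading coefficient of $x^d$, and conclude from the invertibility of $\widebar{B}$. The difference is in how the leading coefficient is obtained. The paper assumes WLOG that $\widebar{B}$ is in Jordan canonical form and cites \cite[Lemma 7]{lichtner2011spectrum} to assert
\[
c_d=\sinc^d(\rho\omega)\,\det\left(i\omega I_{n-d}-A_1\right)\prod_{j=1}^d\widebar{\lambda}_j,
\]
whereas you derive the coefficient from scratch by column multilinearity: selecting the $-x\sinc(\rho\omega)B$ contribution in each of the last $d$ columns isolates a block lower-triangular matrix, giving $[x^d]\,p_\omega(x)=(-\sinc(\rho\omega))^{d}\,\det(i\omega I_{n-d}-A_1)\,\det(\widebar{B})$. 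Your version is self-contained and avoids both the Jordan-form normalization and the external lemma; it also differs from the paper's stated coefficient by the harmless factor $(-1)^d$ (your sign is the correct one, as the scalar case $p_\omega(x)=i\omega-\alpha-x\sinc(\rho\omega)\beta$ confirms, but this has no bearing on the nonvanishing criterion). Your attention to the edge cases $\omega=0$ and $d=n$ is a nice touch that the paper leaves implicit.
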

\begin{proof}
 It follows from the assumed form of $B$ in \eqref{matrix_B} that the degree of $p_\omega$ is at most $d$. Assume,  without loss of generality, that $\widebar{B}$ is in Jordan canonical form containing the eigenvalues $\widebar{\lambda}_j$, $j=1,\ldots,d$, on its diagonal.
 By \cite[Lemma 7]{lichtner2011spectrum},  the  leading order coefficient for monomial $x^d$ in $p_\omega(x)$ is
\[
c_d:=\sinc^d(\rho\omega)\,\det\left(i \omega I_{n-d}-A_1\right) \prod_{j=1}^d \widebar{\lambda}_j.
\]
Since $\widebar{B}$ is an invertible matrix, $\widebar{\lambda}_j\neq0$ for all $j=1,\ldots,d$. Hence
\[c_d\neq 0 \Leftrightarrow  i \omega \notin \sigma\left(A_1\right)
\text{ and }
\rho\omega\neq k\pi\text{ for } k\in \mathbb{Z}\setminus\{0\}.
\]
\end{proof}
By Theorems \ref{Thm_App_1} and \ref{theorem_rescaled_pseudo_continuous}, there exist $d$ spectral curves $\gamma_{\ell}(\omega) $ that  satisfy
\[p_\omega\left(e^{-(\gamma_{\ell}(\omega)+i\varphi)}\right)=0,\qquad \text{for some} \,  \varphi\in\mathbb{R}.\]
Consequently, we define the asymptotic continuous spectrum  as 
\begin{equation}\label{A_c}
    \mathcal{A}_c:=\bigcup_{\ell
 = 1}^d\left\{\gamma_{\ell}(\omega)+i \omega \in \mathbb{C} \mid  \,\exists \varphi \in \mathbb{R}:p_\omega\left(e^{-(\gamma_{\ell}(\omega)+i\varphi)}\right)=0,  \, 
\gamma_{\ell}(\omega) \notin \left\{ - \infty ,\infty \right\}\right\}
\end{equation}
For the singularities $\left\{ - \infty ,\infty \right\}$ in \eqref{A_c}, it follows from Theorem \ref{theorem_rescaled_pseudo_continuous} that  for $\ell\in\{1,\ldots,d\}$, we have
\begin{align*}
 ~& \gamma_{\ell}(\omega)=\infty 
\Leftrightarrow
i\omega\in \sigma(A) \\
~&\gamma_{\ell}(\omega)=-\infty \Leftrightarrow
i\omega\in \sigma(A_1).
\end{align*}
The previous definition applies if $\mathcal{A}_0=\emptyset$. If not, then the definition of the asymptotic continuous spectrum becomes 
\begin{equation}\label{eqq_A0_new}
 \mathcal{A}_c:=\mathrm{closure}\left\{\gamma+i \omega \in \mathbb{C} \mid i \omega \notin \mathcal{A}_0 \text { and } \exists \varphi \in \mathbb{R}: p_\omega\left(e^{-\gamma} e^{-i \varphi}\right)=0\right\}.  
\end{equation}
Note that we can use the notation introduced here to give an alternate definition of $\mathcal{A}_0$ 
\[
\mathcal{A}_0:=\left\{i \omega \in \mathbb{C} \,\mid\,  i \omega \in \sigma(A) \,\text{ and }\,  p_\omega(x)=0 \text { for all } x\in \mathbb{C}\right\}.
\]

\subsection{Stability results.}
Based on the results in the Appendix \ref{sec:appendix}, the following results characterize the stability of \eqref{model_linear_prel} in terms of the spectral curves.
 \begin{theorem}
\begin{enumerate}[series=MyList,label= ({\roman*})]
    \item  If the spectral curves are in the negative half-plane, i.e., $\gamma_l(\omega)<0$ for all $\omega \in \mathbb{R}, 1 \leq l \leq d$, and $\mathcal{A}_{+}=\emptyset$, then there exists $0<\epsilon_0\le 1/\rho$ such that for $0<\epsilon<\epsilon_0$ the equation \eqref{model_linear_prel} is exponentially asymptotically stable.

   \item  If some spectral curve admits positive values, i.e., $\gamma_l(\omega)>0$ for some $\omega \in \mathbb{R}$ and $l \in\{1, \ldots, d\}$, or $\mathcal{A}_{+} \neq \emptyset$, then there exists $0<\epsilon_0\le 1/\rho$ such that for $0<\epsilon<\epsilon_0$ the the equation \eqref{model_linear_prel} is exponentially asymptotically unstable.
\end{enumerate}
\end{theorem}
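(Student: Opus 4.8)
The plan is to reduce both statements to the sign of the spectral abscissa $\alpha(\epsilon):=\sup\{\Re(\lambda):\Delta(\lambda)=0\}$ of \eqref{charact_eq_tau} and then read it off from the three-part decomposition $\mathcal{A}_0\cup\mathcal{P}_s^{\epsilon}\cup\mathcal{P}_c^{\epsilon}$ established in the Appendix. For each fixed $\epsilon=1/\gavg$ the system \eqref{model_linear_prel} is of retarded type, so it has only finitely many roots in any right half-plane and exponential asymptotic stability is equivalent to $\alpha(\epsilon)<0$, while instability follows from producing a single root with $\Re(\lambda)>0$. Accordingly, for (i) I would bound $\Re(\lambda)$ from above by a negative quantity uniformly over the three sets, and for (ii) I would exhibit one eigenvalue in the open right half-plane.

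For part (i) I would first dispose of the delay-independent imaginary spectrum: exponential stability forces $\mathcal{A}_0=\emptyset$, and away from the coincidences $\omega=k\pi/\rho$ this is guaranteed by the hypothesis, since by the singularity characterization in Theorem~\ref{theorem_rescaled_pseudo_continuous} any frequency with $i\omega\in\sigma(A)$ produces a curve with $\gamma_l(\omega)\to+\infty$, contradicting $\gamma_l(\omega)<0$; the non-generic coincident frequencies must be excluded separately. Next, because $\mathcal{A}_+=\emptyset$, Theorem~\ref{thm_append_1} gives that the finite set $\mathcal{P}_s^{\epsilon}$ converges to $\mathcal{A}_-\subset\{\Re<0\}$, so for $\epsilon$ small these finitely many eigenvalues satisfy $\Re(\lambda)\le -c_1<0$ with $c_1$ independent of $\epsilon$.

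The core of the argument is the pseudo-continuous spectrum. Using the expansion \eqref{eq_O2_sinhc} with the uniform bounds $|f_1(\omega)|\le 0.4361$ and $-0.1243\le f_2(\omega)<1/6$ (valid for $\epsilon\ll 3/\rho$ by Remark~\ref{remark_rho}), each $\lambda\in\mathcal{P}_c^{\epsilon}$ obeys $\Re(\lambda)=\epsilon\,\gamma_l(\omega)+O(\epsilon^2)$ with the remainder controlled uniformly in $\omega$. To conclude I need the strengthening $\sup_\omega\gamma_l(\omega)<0$, not merely pointwise negativity: I would show the curves tend to $-\infty$ at the ends of their domain --- near each excluded frequency $\omega=k\pi/\rho$ with $i\omega\notin\sigma(A)$ the vanishing of $\sinc(\rho\omega)$ forces $e^{-\gamma_l}=|e^{-(\gamma_l+i\varphi)}|\to\infty$, and the same decay of $\sinc(\rho\omega)$ together with the linear growth of the branches $y$ solving $i\omega\in\sigma(A+yB)$ gives $\gamma_l(\omega)\to-\infty$ as $|\omega|\to\infty$ --- so the supremum is attained on a compact set, where continuity and strict negativity yield $\sup_\omega\gamma_l(\omega)=-c_2<0$. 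Combining the bounds, the rightmost roots are the pseudo-continuous ones and $\alpha(\epsilon)\le -c_2\,\epsilon+O(\epsilon^2)<0$ for all $0<\epsilon<\epsilon_0$, which is the claimed exponential stability.

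For part (ii) I would simply exhibit a root to the right of the axis in each case. If $\mathcal{A}_+\neq\emptyset$, Theorem~\ref{thm_append_1} supplies an eigenvalue of \eqref{charact_eq_tau} converging to some $\lambda_0\in\sigma(A)$ with $\Re(\lambda_0)>0$, hence $\Re(\lambda)>0$ for small $\epsilon$; if instead $\gamma_l(\omega_0)>0$ for some $\omega_0$ and index $l$, the expansion \eqref{eq_O2_sinhc} gives a pseudo-continuous root with $\Re(\lambda)=\epsilon\,\gamma_l(\omega_0)+O(\epsilon^2)>0$ for small $\epsilon$; in either case $\alpha(\epsilon)>0$. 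I expect the only real difficulty to lie in part (i), in upgrading the pointwise condition $\gamma_l(\omega)<0$ to a uniform spectral gap over the infinitely many pseudo-continuous eigenvalues: this needs both the uniform control of the $O(\epsilon^2)$ remainder across all $\omega$ and the asymptotic analysis of $\gamma_l$ at the frequencies $\omega=k\pi/\rho$ and as $|\omega|\to\infty$, so that $\sup_\omega\gamma_l(\omega)$ is negative rather than merely zero.
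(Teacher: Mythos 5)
The paper never writes out a proof of this theorem --- it is presented as a direct consequence of the Appendix results (it is the analogue of the stability corollary in \cite{lichtner2011spectrum}) --- and your overall architecture is exactly that intended derivation. Your part (ii) is correct once the appeal to the expansion \eqref{eq_O2_sinhc} is replaced by the result that actually does the work, Theorem~\ref{thm_append_2}(i): applying it to $\mu=\gamma_l(\omega_0)+i\omega_0\in\mathcal{A}_c$ with $\delta<\gamma_l(\omega_0)$ yields a root with $\Re(\lambda)>\epsilon\,(\gamma_l(\omega_0)-\delta)>0$. The genuine gap is in part (i), at the step ``each $\lambda\in\mathcal{P}_c^{\epsilon}$ obeys $\Re(\lambda)=\epsilon\,\gamma_l(\omega)+O(\epsilon^2)$ with the remainder controlled uniformly in $\omega$.'' The expansion \eqref{eq_O2_sinhc} cannot give this: it presupposes that the root already has the scaled form $\epsilon\gamma+i\omega$ with $\gamma=O(1)$, whereas $\mathcal{P}_c^{\epsilon}$ is \emph{defined} as everything outside $\sigmaeps_s\cup\mathcal{A}_0$, so for its elements this form is precisely what must be proved. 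The rigorous tool, Theorem~\ref{thm_append_2}(ii), controls only roots in a strip $|\Im(\lambda)|<R$, with $\epsilon_0$ depending on $R$; hence your global quantities ($\sup_\omega\gamma_l$, the limits of the curves at $\omega=k\pi/\rho$ and as $|\omega|\to\infty$) never touch the roots with large imaginary parts. The missing ingredient is an a priori confinement: for $\Re(\lambda)\ge0$ and $\gavg\ge\rho$, one has $e^{-\lambda\gavg}\sinhc(\lambda\rho)=\bigl(e^{-\lambda(\gavg-\rho)}-e^{-\lambda(\gavg+\rho)}\bigr)/(2\rho\lambda)$, whose modulus is at most $1/(\rho|\lambda|)$; since any root is an eigenvalue of $A+B\,e^{-\lambda\gavg}\sinhc(\lambda\rho)$, every root in the closed right half-plane satisfies $|\lambda|\le R_0:=\max\{1,\,\|A\|+\|B\|/\rho\}$, uniformly in $\epsilon$. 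Then take $R=R_0+r$, let $c:=-\sup\{\gamma_l(\omega):|\omega|\le R,\,1\le l\le d\}>0$ (a supremum of continuous $[-\infty,0)$-valued functions over a compact set), and $\delta<\min\{r,c\}$: a root with $\Re(\lambda)\ge0$ cannot lie in $\sigmaeps_s$ (its elements have $\Re(\lambda)\le-2r$ by \eqref{r_def}) nor in $\mathcal{A}_0=\emptyset$, so it lies in $\sigmaeps_c$ with $|\Im(\lambda)|\le R_0$, and Theorem~\ref{thm_append_2}(ii) then forces $\Re(\lambda)<\epsilon(-c+\delta)<0$, a contradiction. Exponential stability follows because the equation is retarded with bounded delay, so excluding the closed right half-plane bounds the spectral abscissa away from zero; note this route also makes your analysis of the curve behaviour at $k\pi/\rho$ and at infinity unnecessary.

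Separately, your remark that the coincidences $\omega=k\pi/\rho$ with $i\omega\in\sigma(A)$ ``must be excluded separately'' is not a removable technicality: the stated hypotheses cannot exclude them, so part (i) is literally false without the standing assumption $\mathcal{A}_0=\emptyset$. In Example~\ref{example_3} with $\alpha=0$, $\rho=1$, $\beta=\pi$, one checks $|\sinc(\omega)|<|\omega\pm\pi|$ for all $\omega$, so both spectral curves are strictly negative and $\mathcal{A}_{+}=\emptyset$, yet $\pm i\pi$ are characteristic roots for every $\gavg$ (the paper itself records $\mathcal{A}_0=\{\pm ik\pi/\rho\}$ in this situation), so the equation is never exponentially stable. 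The same escape occurs at decoupled coincidences $i\omega\in\sigma(A)\cap\sigma(A_1)$, which your argument via Theorem~\ref{theorem_rescaled_pseudo_continuous}(ii) misses because such $\omega\notin\mathcal{W}$. So $\mathcal{A}_0=\emptyset$ has to be imposed as a hypothesis of the theorem, not derived from curve negativity.
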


  \begin{theorem}
  There exists $0<\epsilon_0\le 1/\rho$ such that for $0<\epsilon<\epsilon_0$ the following assertion is true: Assume that $i \omega$ is the eigenvalue in $\sigmaeps$ with largest real part, and that the polynomial $p_\omega(x)$ is not identically zero. Then,  the eigenvalue $ i \omega$ belongs to the pseudo-continuous spectrum.
  \end{theorem}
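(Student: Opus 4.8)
The plan is to argue by elimination among the three collectively exhaustive spectral classes established in the Appendix, namely the strong critical spectrum $\mathcal{A}_0$, the strong spectrum $\mathcal{P}_s^{\epsilon}$, and the pseudo-continuous spectrum $\mathcal{P}_c^{\epsilon}$: since every point of $\sigmaeps$ lies in one of them, it suffices to show that for all sufficiently small $\epsilon$ the critical eigenvalue $i\omega$ can belong neither to $\mathcal{A}_0$ nor to $\mathcal{P}_s^{\epsilon}$, so that the only remaining possibility is $\mathcal{P}_c^{\epsilon}$.

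First I would dispose of $\mathcal{A}_0$. By the alternate characterization $\mathcal{A}_0=\{i\omega\in\mathbb{C}\mid i\omega\in\sigma(A)\text{ and } p_\omega(x)=0 \text{ for all } x\in\mathbb{C}\}$, membership in $\mathcal{A}_0$ would force $p_\omega$ to vanish identically. This is ruled out directly by the hypothesis that $p_\omega$ is not identically zero, so $i\omega\notin\mathcal{A}_0$, and this holds for every $\epsilon$.

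Next I would eliminate $\mathcal{P}_s^{\epsilon}$, and here the maximality hypothesis does the initial work. If $\mathcal{A}_{+}\neq\emptyset$ then, by Theorem~\ref{thm_append_1}, $\mathcal{P}_s^{\epsilon}$ would contain for small $\epsilon$ an eigenvalue with real part arbitrarily close to a strictly positive element of $\sigma(A)$, contradicting that $i\omega$, with $\Re(i\omega)=0$, is the eigenvalue of largest real part. Hence $\mathcal{A}_{+}=\emptyset$ and $\mathcal{A}_s=\mathcal{A}_{-}$ lies in the open left half-plane. Since $\mathcal{A}_{-}\subset\sigma(A_1)$ is finite and disjoint from the imaginary axis, the quantity $\delta_0:=-\max_{\mu\in\mathcal{A}_{-}}\Re(\mu)$ is strictly positive. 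The clustering of $\mathcal{P}_s^{\epsilon}$ about $\mathcal{A}_s$ as $\epsilon\to 0$ then yields a threshold $\epsilon_0\le 1/\rho$ — with the bound $1/\rho$ enforced by the validity of the expansion in Remark~\ref{remark_rho} — such that for $0<\epsilon<\epsilon_0$ every eigenvalue of $\mathcal{P}_s^{\epsilon}$ satisfies $\Re(\lambda)<-\delta_0/2<0$. As $\Re(i\omega)=0$, the critical eigenvalue is not in $\mathcal{P}_s^{\epsilon}$.

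Combining the two exclusions gives $i\omega\in\mathcal{P}_c^{\epsilon}$, which I would corroborate constructively. Because $\sinhc(i\omega\rho)=\sinc(\omega\rho)$ exactly, the eigenvalue condition $\Delta(i\omega)=0$ is nothing but $p_\omega\!\left(e^{-i\gavg\omega}\right)=0$, so $p_\omega$ has a root on the unit circle; writing its $d$ roots as $e^{-(\gamma_\ell(\omega)+i\varphi)}$ shows that one spectral curve satisfies $\gamma_\ell(\omega)=0$, placing $i\omega$ on a pseudo-continuous branch exactly at its critical crossing. The main obstacle is the quantitative clustering step: I must extract from Theorem~\ref{thm_append_1} a deviation bound of each strong eigenvalue from $\mathcal{A}_s$ that is uniform over the finitely many members of $\mathcal{P}_s^{\epsilon}$, so that a single threshold $\epsilon_0$ simultaneously forces $\Re(\lambda)<-\delta_0/2$ for all of them; granting this uniform estimate, the remaining deductions are immediate.
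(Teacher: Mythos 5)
Your proof is correct and is essentially the argument the paper intends: the paper states this theorem without an explicit proof, presenting it as a direct consequence of the Appendix results (per the correspondence table there, it mirrors Corollary 6 of \cite{lichtner2011spectrum}), and that intended derivation is precisely your elimination among $\mathcal{A}_0$, $\mathcal{P}^{\epsilon}_s$ and $\mathcal{P}^{\epsilon}_c$, using the hypothesis $p_\omega\not\equiv 0$ against the alternate characterization of $\mathcal{A}_0$ and Theorem~\ref{thm_append_1} against $\mathcal{A}_+$.

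One clarification removes the step you flag as the ``main obstacle.'' No uniform clustering estimate is needed to exclude the strong spectrum, because in the Appendix $\mathcal{P}^{\epsilon}_s$ is \emph{defined} as $\sigmaeps\cap\ball_r(\mathcal{A}_s)$, with $r\le \tfrac{1}{3}\operatorname{dist}(\mathcal{A}_s,i\mathbb{R})$ by \eqref{r_def}. Once the maximality hypothesis forces $\mathcal{A}_+=\emptyset$ (your application of Theorem~\ref{thm_append_1}(i), which is the only place where smallness of $\epsilon$ enters and which supplies the threshold $\epsilon_0\le 1/\rho$), we have $\mathcal{A}_s=\mathcal{A}_-$, and every $z\in\ball_r(\mathcal{A}_-)$ satisfies $\Re(z)<-\tfrac{2}{3}\operatorname{dist}\left(\mathcal{A}_-,i\mathbb{R}\right)<0$; if $\mathcal{A}_-=\emptyset$ the exclusion is trivial. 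Hence $i\omega\notin\mathcal{P}^{\epsilon}_s$ for \emph{every} admissible $\epsilon$, by definition rather than by a convergence argument, and no quantitative deviation bound (nor Theorem~\ref{thm_append_1}(ii)) is required. Your three exclusions then give $i\omega\in\sigmaeps\setminus\left(\mathcal{P}^{\epsilon}_s\cup\mathcal{A}_0\right)=\mathcal{P}^{\epsilon}_c$, which is the claim. Finally, your observation that $\Delta(i\omega)=p_\omega\!\left(e^{-i\gavg\omega}\right)$, so that some spectral curve satisfies $\gamma_{\ell}(\omega)=0$, is a correct complementary remark: it places $i\omega$ on the asymptotic continuous spectrum $\mathcal{A}_c$ at a point where a spectral curve touches the imaginary axis, though this is not needed for membership in $\mathcal{P}^{\epsilon}_c$.
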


\section{Examples}\label{section_examples}
This section presents three modified examples from \cite{lichtner2011spectrum} to compare our results with those presented there. The fixed delay case $\tau$ corresponding to the Dirac-delta function $\delta(s-\tau)$ in \eqref{model_linear_prel} is discussed in \cite{lichtner2011spectrum}. Recall that as $\rho$ approaches zero in \eqref{unieq1},  the uniform distribution kernel becomes the Dirac-delta function. 
One of the key differences from  \cite{lichtner2011spectrum} that we noticed in the analysis and computations is that in all of the examples, the spectral curve approaches infinity at an infinite countable number of point $\omega= k\pi/\rho$, where $k\in \mathbb{Z}\setminus\{0\}$.

\begin{example}\label{example_1}
    Consider the scalar equation 
    \begin{equation}\label{example1_eq}
    \frac{du}{dt}=\alpha\,u(t)+\frac{\beta}{2\rho}\,\int_{\tau_m-\rho}^{\tau_m+\rho} u(t-s)\, ds
\end{equation}
where $\alpha,\,\beta\in\mathbb{R}$. We assume $\beta\ne 0$, so that the equation is a DDE.
\end{example}
It is clear that $A=A_4=\alpha$ and $B=\widebar{B}=\beta$. Then,  $\mathcal{A}_{-}=\mathcal{A}_{0}=\emptyset$ and 
\[
\mathcal{A}_{s}=\mathcal{A}_{+}=\{\lambda-\alpha=0 \, \mid   \, \Re(\lambda)>0\}= \left\{ {\begin{array}{*{20}{c}}
{  \alpha  ,}&{\alpha > 0,}\\
\emptyset,&{\alpha < 0.}
\end{array}} \right.\]
Thus  we immediately see that, for all $\rho\ge 0$ and all $\gavg$ sufficiently large, system~\eqref{example1_eq} is unstable when $\alpha>0$.

The characteristic equation corresponding to \eqref{example1_eq} is 
\begin{equation}\label{eq_vvv}
    \Delta_1(\lambda)=\lambda -\alpha-\beta\, e^{-\lambda\tau_m}\,\sinhc( \rho\lambda)=0.
\end{equation}
Consequently, we define 
\[
p_\omega(x)=i \omega -\alpha-x\,\sinc( \rho\omega)\,\beta.
\]
When $\beta\neq0$, we have $\text{rank}(B)=1$. Hence,  there exists one spectral curve that  satisfies
\begin{equation}\label{eq_1_ex_1}
    p_\omega\left(e^{-(\gamma (\omega)+i\varphi)}\right)=0,\qquad \text{for some} \,  \varphi\in\mathbb{R}.
\end{equation}
Separating the real and imaginary parts gives
\begin{neweq_non}\label{eq:ReIm}
\alpha\,+\,\beta\, e^{-\gamma(\omega)}\cos(\varphi)\,\sinc( \rho\omega)=0,\\
\omega\,+\,\beta\, e^{-\gamma(\omega)}\sin(\varphi)\,\sinc( \rho\omega)=0.
\end{neweq_non}
Using the Pythagorean trigonometric identity, we eliminate $\varphi$ and get the spectral  curve
\[  \gamma(\omega)=-\frac{1}{2}\ln\left(\frac{   \alpha ^2+\omega ^2 }{\beta ^2\, \sinc ^2(\rho  \omega )}\right).  \]
Consequently, we have
\[
\mathcal{A}_c=\left\{\gamma(\omega)+i \omega \,\mid\, \omega \in \mathbb{R},\, \omega \neq0 \text{ or } \alpha \neq 0\right\}.
\]
Figures \ref{Fig:Example_1_1} and \ref{Fig:Example_1_2} show consistency with the theoretical predictions.
We use the command \textsf{Solve} in \textit{Wolfram Mathematica} to find the eigenvalues of the characteristic equation \eqref{eq_vvv} in a circle of radius twenty units centered at the origin. 
The numerically computed eigenvalues follow the pseudo-continuous spectrum curve and become more closely aligned with the curve as $\gavg$ increases with fixed $\rho$ and as $\rho$ decreases with fixed large $\gavg$. Moreover, the horizontal asymptotes appear at the expected $\omega = k\pi/\rho$,  $k\in \mathbb{Z}\setminus\{0\}$,  which become denser and closer to each other as $\rho$  increases.  Moreover, when $\alpha=0$, we have that  $\gamma(\omega)$ has a horizontal asymptote at $\omega=0$ since $\gamma(0)\to \infty$, see Figure \ref{Fig:Example_1_2A}.

Note that at $\omega=0$, the numerator $\alpha ^2+\omega ^2$ has the absolute minimum $\frac{1}{2}\ln\left(\alpha ^2 \right)$ and the denominator  $\beta ^2\, \sinc ^2(\rho  \omega )$  has the absolute maximum $\frac{1}{2}\ln\left(\beta ^2 \right)$. Hence, the absolute maximum value of $ \gamma(\omega)$ is $\gamma^*=\frac{1}{2}\ln(\beta^2/\alpha^2)=\ln|\beta/\alpha|$ occurs at $\omega=0$. Note that the value of $\gamma^*$ is independent of  $\rho$. Consequently,  when $\alpha<0$, for all $\rho\ge 0$ and $\gavg$ sufficiently large, system  \eqref{example1_eq} is
\begin{equation}\label{stbility_example_1}
    \text{stable if }\, \alpha<-|\beta|<0
   \qquad\text{and}\qquad
   \text{unstable if }\, -|\beta|<\alpha<0,
\end{equation}
see Figures \ref{Fig:Example_1_3}A and \ref{Fig:Example_1_3}C. Note that the eigenvalue spectrum touches the imaginary axis when $\alpha=-|\beta|$. This stability condition  is shown in Figure \ref{Fig:Example_1_3}B. Further, the fact that $\gamma^*$ is independent of  $\rho$ can be seen in numerical computations of the spectra presented in Figure \ref{Fig:Example_1_1}, where increases in $\rho$ demonstrate no effect on the maximum real part of the continuous spectrum.

As  $\rho\to 0$, it follows from $g(s)$ in \eqref{unieq1} that 
\[\lim_{\rho \rightarrow 0}\int_{0}^{\infty}\,g(s)\, u(t-s)\, ds= \lim_{\rho \rightarrow 0}\int_{-\infty}^{\infty}\,g(s)\, u(t-s)\, ds=\int_{-\infty}^{\infty}\,\delta(s-\tau)\, u(t-s)\, ds=u(t-\gavg).\]
Consequently, system \eqref{example1_eq} transitions to a fixed delay equation, which can be expressed as
\[ \frac{du}{dt}=\alpha\,u(t)+\beta\,u(t-\gavg).\]
Compared to the fixed delay case, the stability condition  \eqref{stbility_example_1} remains unchanged, as previously discussed in \cite[Example 1]{lichtner2011spectrum}. As noted in Remarks~\ref{remark_rho} and \ref{rem_rhodep} when $\rho$ is larger, larger values of $\tau_m$ are need to see good correspondence between the true spectra and the asymptotic spectra.

\begin{figure} [!htb]
    \centering
    \includegraphics[width=1\linewidth]{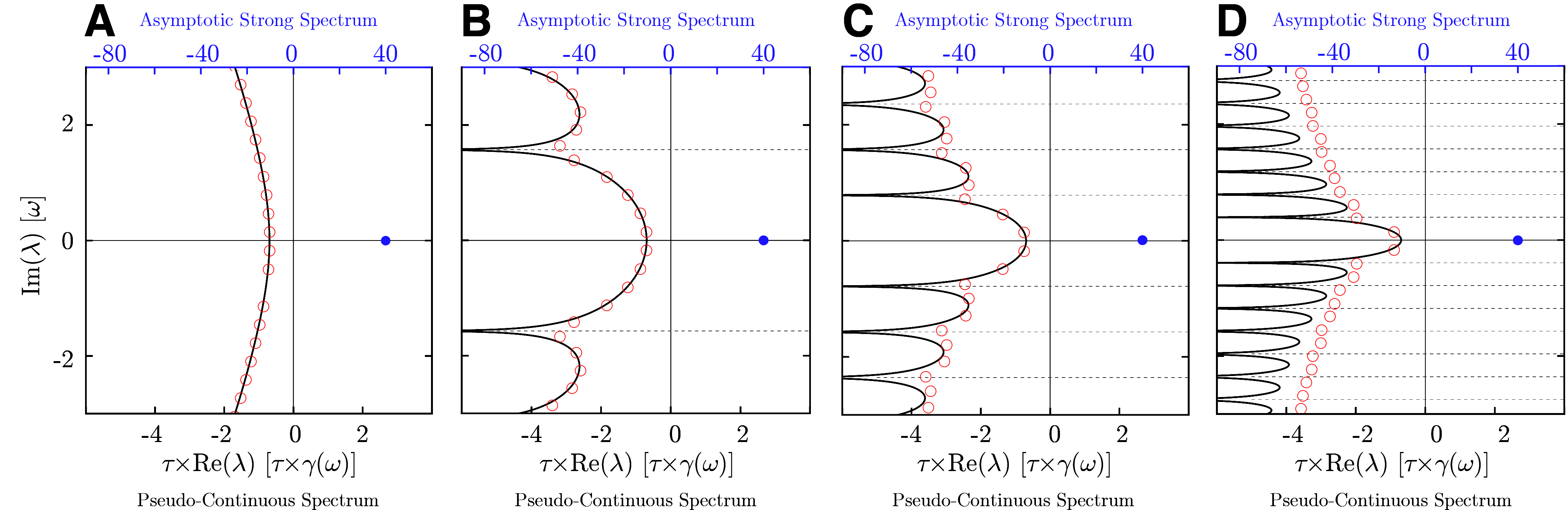}
\caption{The effect of increasing $\rho$ with a fixed large $\gavg$  on the asymptotic calculations in Example \ref{example_1}. Spectra of Example \ref{example_1} for large mean delay with $\gavg=20$, $\alpha=2$, and $\beta=1$:  {\rm (A)} $\rho=0.5$      {\rm (B)} $\rho=2$   {\rm (C)} $\rho=4$      {\rm (D)} $\rho=8$. 
Black curve is the pseudo-continuous spectrum, blue dots are the asymptotic strong spectrum, red circles are numerically computed eigenvalues, and gray dashed lines are horizontal asymptotes at $\omega= k\pi/\rho$, $k\in \mathbb{Z}\setminus\{0\}$.}
\label{Fig:Example_1_1}
\end{figure}

 \begin{figure} [!htb]
    \centering
    \includegraphics[width=1\linewidth]{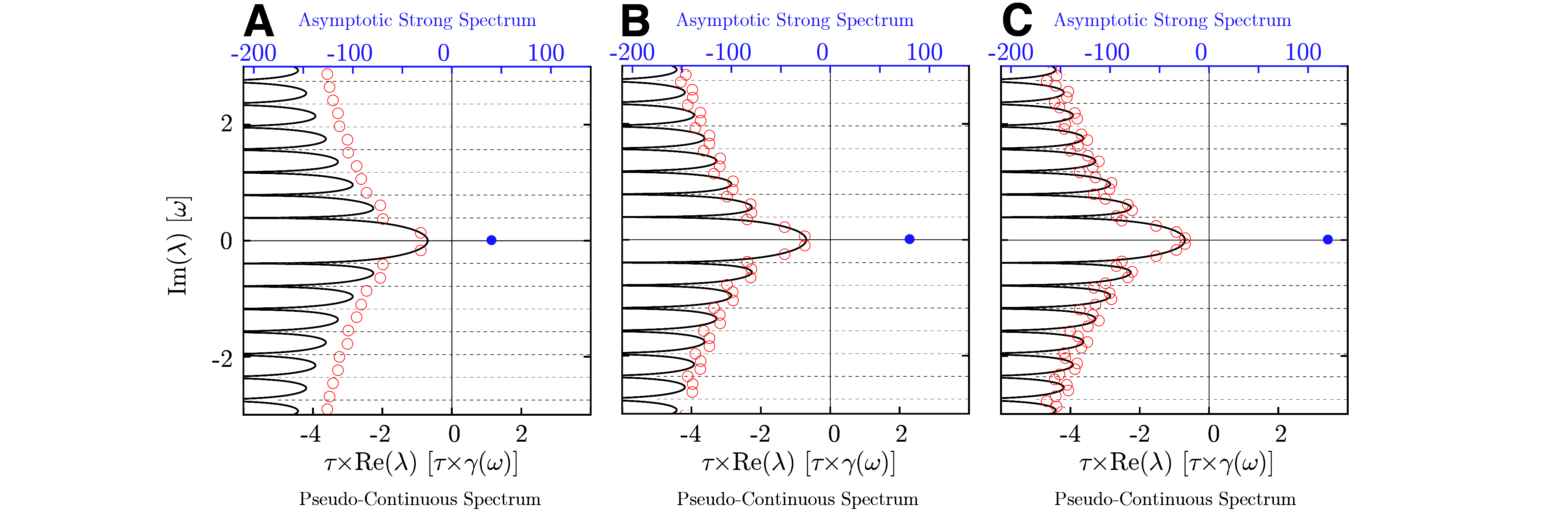}
    \caption{The effect of increasing $\gavg$ with a fixed  $\rho$  on the asymptotic calculations in Example \ref{example_1}. Spectra of Example \ref{example_1} for large mean delay with $\rho=8$, $\alpha=2$, and $\beta=1$: {\rm (A)} $\gavg=20$   {\rm (B)} $\gavg=40$      {\rm (C)} $\gavg=60$.  
Black curve is the pseudo-continuous spectrum, blue dots are the asymptotic strong spectrum, red circles are numerically computed eigenvalues, and gray dashed lines are horizontal asymptotes at $\omega= k\pi/\rho$,  $k\in \mathbb{Z}\setminus\{0\}$.}
\label{Fig:Example_1_2}
\end{figure}

\begin{figure} [!htb]
    \centering
    \includegraphics[width=1\linewidth]{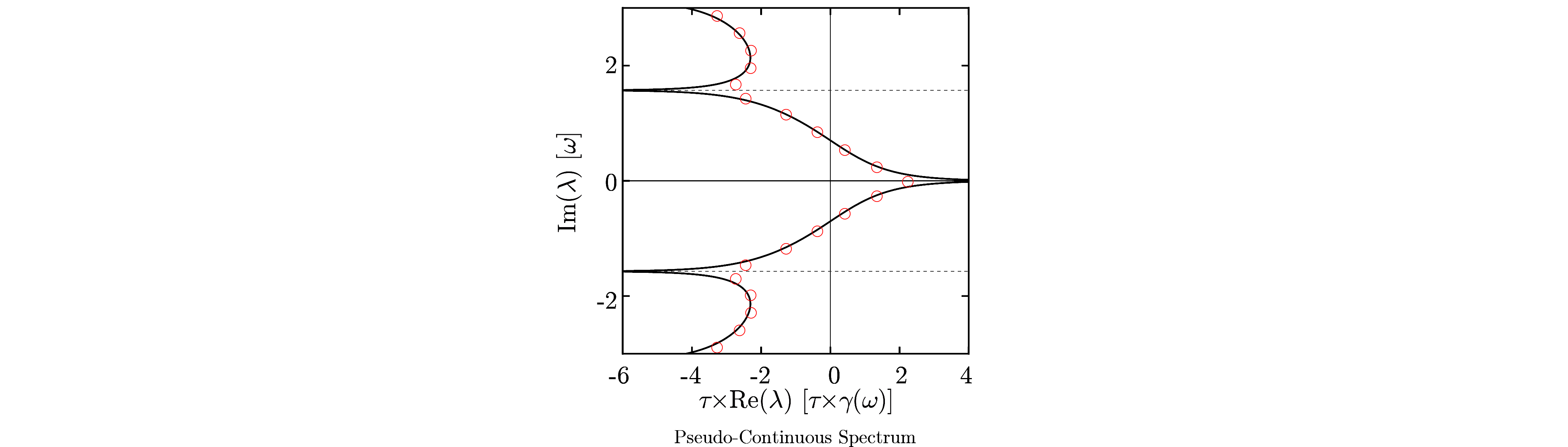}
\caption{Spectra of Example \ref{example_1} when $\alpha=0$.  Parameter values are $\gavg=20$, $\rho=2$, and $\beta=2$.
Black curve is the pseudo-continuous spectrum, red circles are numerically computed eigenvalues, and gray dashed lines are horizontal asymptotes at $\omega= k\pi/\rho$, $k\in \mathbb{Z}\setminus\{0\}$.}
\label{Fig:Example_1_2A}
\end{figure}

 \begin{figure} [!htb]
    \centering
    \includegraphics[width=1\linewidth]{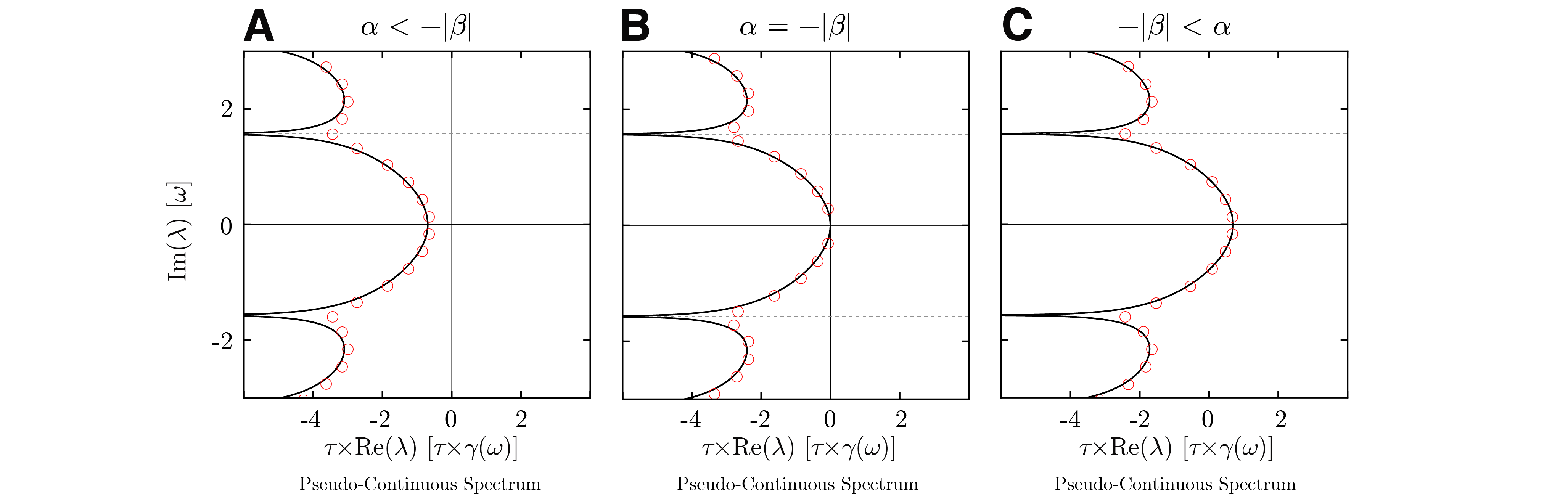}
\caption{Stability condition  \eqref{stbility_example_1}  in Example \ref{example_1}. Spectra of Example \ref{example_1} for large mean delay $\gavg=20$, $\rho=2$, $\alpha=-1$:  {\rm (A)} $\beta=-0.5$   {\rm (B)} $\beta=-1$      {\rm (C)} $\beta=2$.
Black curve is the pseudo-continuous spectrum, red circles are numerically computed eigenvalues, and gray dashed lines are horizontal asymptotes at $\omega= k\pi/\rho$, $k\in \mathbb{Z}\setminus\{0\}$.}
\label{Fig:Example_1_3}
\end{figure}

  
\begin{example}\label{example_2}
    Consider the system 
    \begin{equation}\label{example2_eq}
    \frac{du}{dt}=\left[\begin{array}{cc}
\alpha & 1 / 2 \\
2 & 0
\end{array}\right]\,u(t)+ \frac{1}{2\rho} \,\int_{\tau_m-\rho}^{\tau_m+\rho} 
\left[\begin{array}{ll}
0 & 0 \\
0 & 1
\end{array}\right]\,u(t-s) \, ds
\end{equation}
where $u(t)=[u_1(t),u_2(t)]^T$ and $\alpha \in\mathbb{R}$.
\end{example}
In this example, we have 
\[A=
\left[\begin{array}{c|c}
A_1 & A_2 \\\hline
A_3 & A_4
\end{array}\right]=
\left[\begin{array}{c|c}
\alpha & 1 / 2 \\\hline
2 & 0
\end{array}\right]
\qquad\text{and}\qquad
B=
\left[\begin{array}{c|c}
0 & 0 \\\hline
0 & \widebar{B}
\end{array}\right]=
\left[\begin{array}{c|c}
0 & 0 \\\hline
0 & 1
\end{array}\right].
\]
Since $\sigma(A)$ only contains real, non-zero eigenvalues $\mathcal{A}_0=\emptyset$.
Further, we have
\begin{align*}
    \mathcal{A}_{-}&=\{\lambda-A_1=0 \, \mid   \, \Re(\lambda)<0\}= \left\{ {\begin{array}{*{20}{c}}
{ \emptyset   ,}&{\alpha \ge 0,}\\
 \alpha,&{\alpha < 0.}
\end{array}} \right.\\
       \mathcal{A}_{+}&=\{\det(\lambda\,I_2-A)=0 \, \mid   \, \Re(\lambda)>0\}= \left\{ {\begin{array}{*{20}{c}}
{ \frac{\alpha}{2}+\sqrt{1+\frac{\alpha^2}{4}}  ,}&{\alpha > 0,}\\
\emptyset,&{\alpha \le 0.}
\end{array}} \right.
\end{align*}
Hence, we obtain 
\[  \mathcal{A}_{s}= \left\{ {\begin{array}{*{20}{c}}
{ \frac{\alpha}{2}+\sqrt{1+\frac{\alpha^2}{4}}  ,}&{\alpha > 0,}\\
\alpha,&{\alpha< 0.}
\end{array}} \right.\]
Thus  as in the previous example, for all $\rho\ge 0$ and all $\gavg$ sufficiently large, system~\eqref{example2_eq} is unstable when $\alpha>0$.

The characteristic equation corresponding to \eqref{example2_eq} is 
\[\Delta_2(\lambda)=\lambda^2-\alpha\lambda-1 -(\lambda-\alpha) e^{-\lambda\tau_m}\,\sinhc( \rho\lambda)=0
\]
Hence, we define 
\[
p_\omega(x)=-\omega^2-\alpha\,i\,\omega-1 -(i\,\omega-\alpha)\,x\,\sinc( \rho\omega).
\]
Since  $\text{rank}(B)=1$,  there exists one  spectral curve 
$  \gamma(\omega)$ 
that  satisfies
$p_\omega\left(e^{-(\gamma (\omega)+i\varphi)}\right)=0$ for some $\varphi\in\mathbb{R}$. Separating the real and imaginary parts, we obtain 
\begin{neweq}\label{eq:ReIm_2}
\omega^2+1&=(\omega\sin(\varphi)-\alpha\cos(\varphi))\,\sinc( \rho\omega)\, e^{-\gamma(\omega)}\\
\alpha\omega &=(\omega\cos(\varphi)+\alpha\sin(\varphi))\,\sinc( \rho\omega)\, e^{-\gamma(\omega)}
\end{neweq}
Then,  squaring and adding equation \eqref{eq:ReIm_2} lead to
\[
\gamma(\omega)=-\frac{1}{2} \ln \left(\frac{\left(1+\omega^2\right)^2+\alpha^2 \omega^2}{\left(\alpha^2+\omega^2\right)\,\sinc^2( \rho\omega)}\right)
\]
Consequently, we have
\[
\mathcal{A}_c=\left\{\gamma(\omega)+i \omega \,\mid\, \omega \in \mathbb{R},\, \omega \neq0 \text{ or } \alpha \neq 0\right\}.
\]
Figure \ref{Fig:Example_2_1} shows the same behavior as in Figures \ref{Fig:Example_1_1}  and  \ref{Fig:Example_1_2}: the numerically computed eigenvalues remain consistent with the pseudo-continuous spectrum predicted by theory. In particular, for fixed $\gavg$ when $\rho$ becomes smaller, the numerical results align more closely with the pseudo-continuous curve.

Simple calculations show that $\gamma(\omega)$ always has a critical point at $\omega=0$, which is a global maximum for all $\rho\ge 0$ if $|\alpha|>\alpha^*:=\sqrt{\sqrt{2}-1}$. Further, if $0<|\alpha|\le \alpha^*$ then $\gamma(0)=\ln\left|\alpha\right |$ is a global maximum if $\rho>\rho^*:=\sqrt{\frac{3}{\alpha^2}-6-3\alpha^2}$ and a local minimum if $\rho<\rho^*.$
Since $\gamma(0)$ is independent of $\rho$ we conclude that the system will be asymptotically stable for all $\rho\ge 0$ and sufficiently large $\gavg$ if $-1<\alpha<-\alpha^*$, and for all $\rho>\rho^*$ and all sufficiently large $\gavg$ if $-\alpha^*<\alpha<0$. Figure~\ref{Fig:Example_2_1} illustrates the case $\alpha>\alpha^*$. It can be seen that the maximum real part of the pseudo-continuous spectrum occurs at $\omega=0$ and does not vary with $\rho$.

When $\alpha=0$, note that $\gamma(\omega)$ has a horizontal asymptote at $\omega=0$ since $\gamma(0)\to -\infty$, see Figure \ref{Fig:Example_2_2}B. This also occurs in the fixed delay case \cite[Example 3]{lichtner2011spectrum}:
\[
    \frac{du}{dt}=\left[\begin{array}{cc}
\alpha & 1 / 2 \\
2 & 0
\end{array}\right]\,u(t)+  
\left[\begin{array}{ll}
0 & 0 \\
0 & 1
\end{array}\right]\,u(t-\gavg).  
\]
Note that the curves of the asymptotic pseudo-continuous spectrum for the discrete delay case are given by 
\[
\gamma_{ fixed}(\omega)=-\frac{1}{2} \ln \left(\frac{\left(1+\omega^2\right)^2+\alpha^2 \omega^2}{\left(\alpha^2+\omega^2\right)}\right)\ge \gamma(\omega).
\]
Thus  we may expect the model with distributed delay to be more stable than that with fixed delay.
Figure \ref{Fig:Example_2_2} shows a comparison with \cite[Figure 3.2]{lichtner2011spectrum}. The parameters in  Figures \ref{Fig:Example_2_2}A and  \ref{Fig:Example_2_2}B correspond to $|\alpha|<\alpha^*$ and $\rho<\rho^*$.
We notice that the pseudo-continuous spectrum curve with a distributed delay has horizontal asymptotes at an infinitely countable number of points $\omega= k\pi/\rho$, where $k\in \mathbb{Z}\setminus\{0\}$, and an additional horizontal asymptote at $\omega=0$ when $\alpha=0$. On the other hand, only one horizontal asymptote exists at $\omega=0$ when $\alpha=0$ in the case of fixed delay.

\begin{figure} [!htb]
    \centering
    \includegraphics[width=1\linewidth]{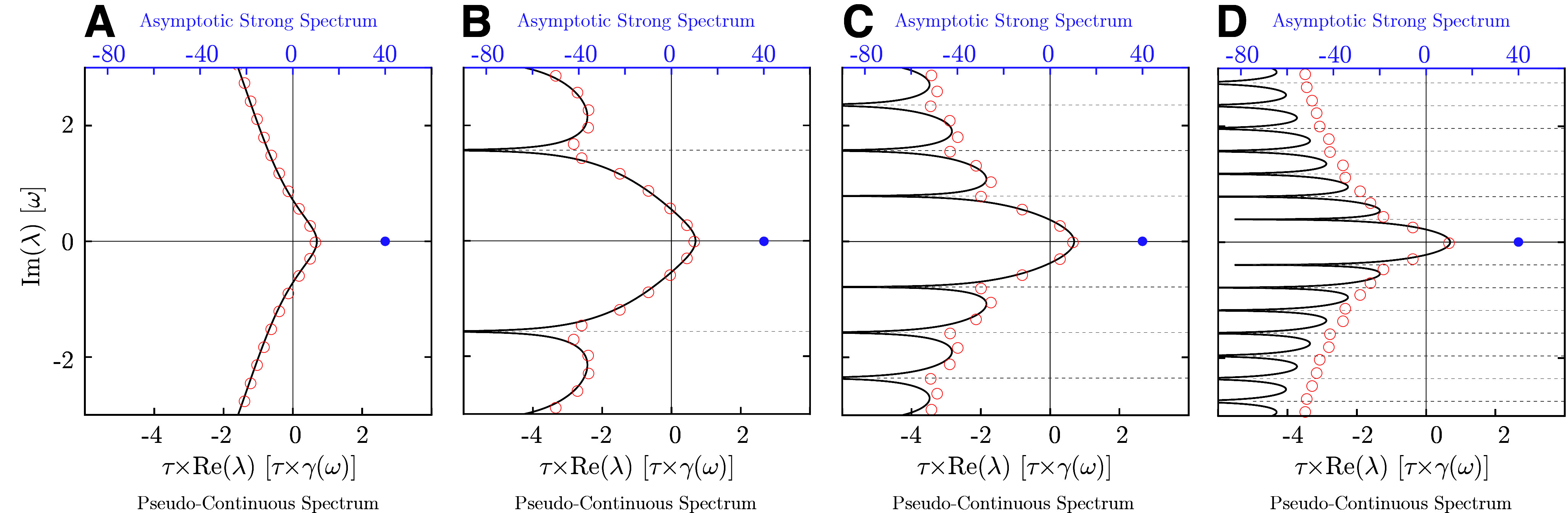}
    \caption{The effect of increasing $\rho$ with a fixed large mean delay $\gavg$ on the asymptotic calculations  in Example \ref{example_2}. Spectra of Example \ref{example_2} for large mean delay with $\gavg=20$, and $\alpha=2$:  {\rm (A)} $\rho=0.5$      {\rm (B)} $\rho=2$   {\rm (C)} $\rho=4$      {\rm (D)} $\rho=8$. 
Black curve is the pseudo-continuous spectrum, blue dots are the asymptotic strong spectrum, red circles are numerically computed eigenvalues, and gray dashed lines are horizontal asymptotes at $\omega= k\pi/\rho$, where $k\in \mathbb{Z}\setminus\{0\}$.}
\label{Fig:Example_2_1}
\end{figure}

\begin{figure} [!htb]
    \centering
    \includegraphics[width=1\linewidth]{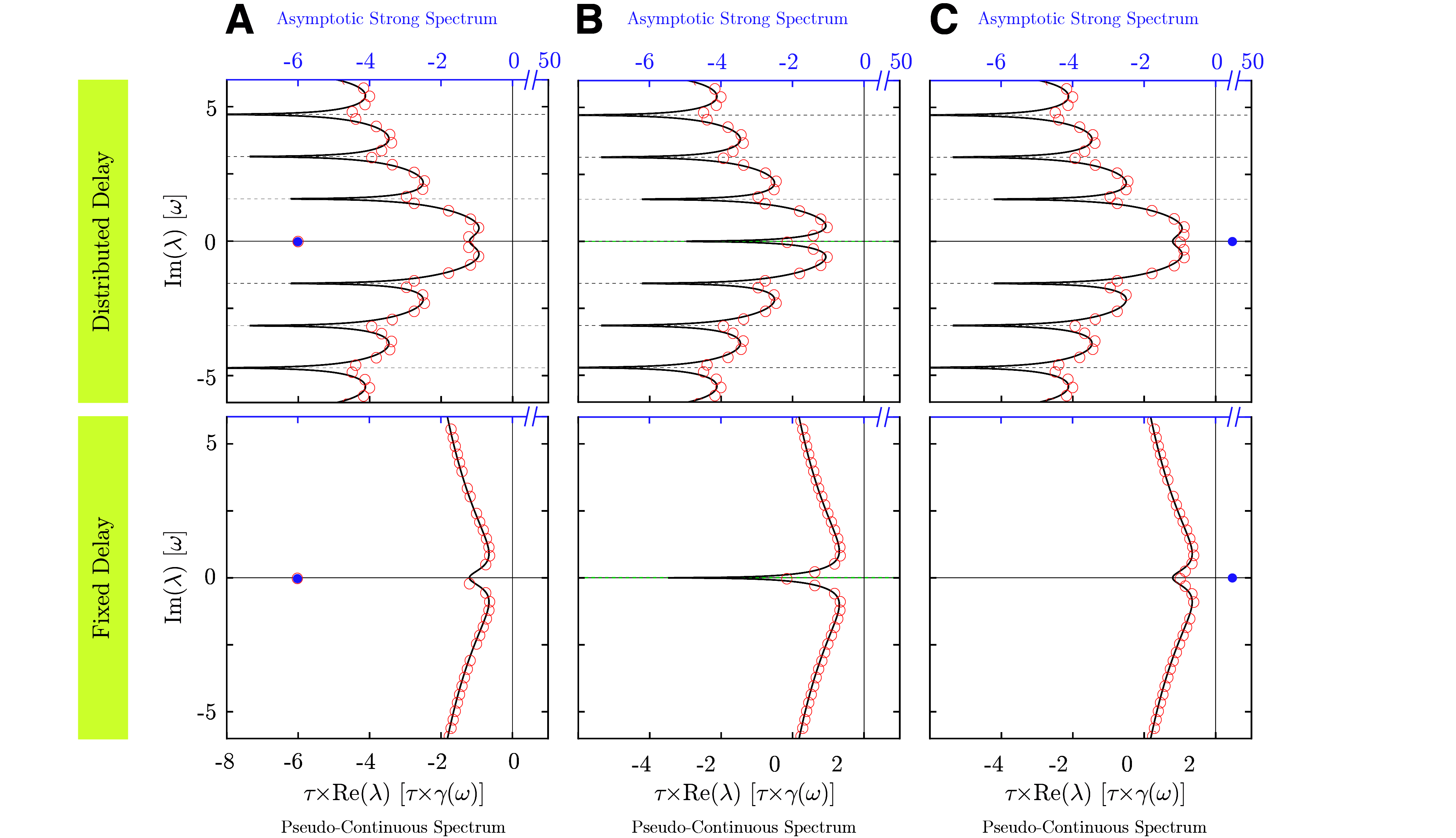}
       \caption{Comparison between uniform distributed delay  in Example \ref{example_2} and fixed delay in  \cite[Figure 3.2]{lichtner2011spectrum}. Parameter values are $\gavg=20$  and $\rho=2$:       {\rm (A)} $\alpha=-0.3$   {\rm (B)}    $\alpha=0$  {\rm (C)}    $\alpha=0.3$. 
Black curve is the pseudo-continuous spectrum, blue dots are the asymptotic strong spectrum, red circles are numerically computed eigenvalues, gray dashed lines are horizontal asymptotes at $\omega= k\pi/\rho$, where $k\in \mathbb{Z}\setminus\{0\}$, and green dashed line is horizontal asymptote at $\omega= 0$ when $\alpha=0$.}
\label{Fig:Example_2_2}
\end{figure}

 
\begin{example}\label{example_3}
    Consider the system 
    \begin{equation}\label{example3_eq}
    \frac{du}{dt}=\left[\begin{array}{cc}
\alpha & \beta \\
-\beta & \alpha
\end{array}\right]\,u(t)+ \frac{1}{2\rho} \,\int_{\tau_m-\rho}^{\tau_m+\rho} 
\left[\begin{array}{ll}
1 & 0 \\
0 & 1
\end{array}\right]\,u(t-s) \, ds
\end{equation}
where $u(t)=[u_1(t),u_2(t)]^T$ and $\alpha,\,\beta\in\mathbb{R}$. 
\end{example}
It is clear that
\[A=A_4=\left[\begin{array}{cc}
\alpha & \beta \\
-\beta & \alpha
\end{array}\right]
\qquad\text{and}\qquad
B=\widebar{B}=\left[\begin{array}{ll}
1 & 0 \\
0 & 1
\end{array}\right]
\]
Consequently, $\mathcal{A}_{-}=\emptyset$ and 
\[
\mathcal{A}_{s}=\mathcal{A}_{+}=\{\det(\lambda\,I_2-A)=0 \, \mid   \, \Re(\lambda)>0\}= \left\{ {\begin{array}{*{20}{c}}
{  \alpha\pm i\beta  ,}&{\alpha > 0,}\\
\emptyset,&{\alpha < 0.}
\end{array}} \right.\]
The characteristic equation corresponding to \eqref{example3_eq} is 
\[\Delta_3(\lambda)=\left(\lambda -\alpha-e^{-\lambda\tau_m}\,\sinhc( \rho\lambda)\right)^2-\beta^2=0.
\]
Consequently, we define 
\[
p_\omega(x)=\left(i\omega -\alpha-x\,\sinc( \rho\omega)\right)^2-\beta^2.
\]
Since  $\text{rank}(B)=2$,  there exist two spectral curves
\[  \gamma_{\pm}(\omega)=-\frac{1}{2}\ln\left(\frac{   \alpha ^2+(\beta \pm\omega )^2 }{\sinc ^2(\rho  \omega )}\right)  \]
that  satisfy
$p_\omega\left(e^{-(\gamma (\omega)+i\varphi)}\right)=0$ for some $\varphi\in\mathbb{R}$. Consequently, we have
\[
\mathcal{A}_c=
\left\{\gamma_{+}(\omega)+i \omega \,\mid\, \omega \in \mathbb{R},\, \omega\neq-\beta  \text{ or }  \alpha \neq 0\right\} 
\cup
\left\{\gamma_{-}(\omega)+i \omega \,\mid\, \omega \in \mathbb{R},\,\omega\neq  \beta  \text{ or }  \alpha \neq 0\right\}
\]
Figure \ref{Fig:Example_3_1} shows the accuracy of the pseudo-continuous spectrum predicting the numerically computed eigenvalues with different values of $\rho$ and fixed large delay $\gavg$. We observe the same behavior as in Examples \ref{example_1} and \ref{example_2}.
  
Note that when $\alpha=0$, the behavior of $\gamma_{ \pm}(\omega)$ depends on the value of $\omega$:
\begin{itemize}
    \item When $\omega= \beta$ with $\beta \neq k \pi / \rho$, then 
             \[\lim_{\omega \rightarrow  \pm \beta}\gamma_{\pm}(\omega ) = \ln \left(\frac{|\sin \left(\rho\beta \right)|}{2\rho\beta^2} \right)\qquad\qquad\text{and}\qquad\qquad
            \lim_{\omega \rightarrow  \pm \beta}\gamma_{\mp}(\omega ) =\infty, \]
            see Figure \ref{Fig:Example_3_3}A-B;
    \item    When $\omega= \beta$ with $\beta =k \pi / \rho$, $k\in\mathbb{Z}\setminus \{0\}$, then
             \[\lim_{\omega \rightarrow  \pm \beta}\gamma_{\pm}(\omega ) =-\infty \qquad\qquad\text{and}\qquad\qquad
            \lim_{\omega \rightarrow  \pm \beta}\gamma_{\mp}(\omega ) =\ln \left(\frac{\rho}{|k|\, \pi}\right), \]
            see Figure \ref{Fig:Example_3_3}C. Note that in this case the strong critical spectrum is nonempty: $\mathcal{A}_0=\{\pm i\,k\pi/\rho \}$.
\end{itemize}

For fixed delay, system \eqref{example3_eq} becomes
   \begin{equation}\label{example3_eq_fixed}
    \frac{du}{dt}=\left[\begin{array}{cc}
\alpha & \beta \\
-\beta & \alpha
\end{array}\right]\,u(t)+  
\left[\begin{array}{ll}
1 & 0 \\
0 & 1
\end{array}\right]\,u(t-\gavg)  
\end{equation}
Here, we have 
\[ \gamma_{fixed,\pm}(\omega)=-\frac{1}{2}\ln\left(\alpha ^2+(\beta \pm\omega )^2\right)\ge \gamma_\pm(\omega). \]
The authors in \cite{lichtner2011spectrum} showed that system \eqref{example3_eq_fixed} is stable if $|\alpha|>1$ and unstable if $|\alpha|<1$. 
This condition does not hold in the case of distributed delays, and stability also depends on $\rho$. In Figure \ref{Fig:Example_3_2}, we take $\alpha=-0.5$ and system  \eqref{example3_eq_fixed} corresponding to $\rho\to 0$ is unstable (Figure \ref{Fig:Example_3_2}A). As $\rho>0$ increases, the corresponding system \eqref{example3_eq} remains unstable for relatively small $\rho$ (Figures \ref{Fig:Example_3_2}B-\ref{Fig:Example_3_2}C), and then becomes stable for relatively large $\rho$ (Figures \ref{Fig:Example_3_2}D-\ref{Fig:Example_3_2}E).
Furthermore, in Figure \ref{Fig:Example_3_4}, we compare the results with \cite[Figure 3.1]{lichtner2011spectrum}. 
In both cases, we notice that the pseudo-continuous spectrum is stable (resp. unstable) when $|\alpha|>1$ (resp. $|\alpha|<1$), and if $\alpha>0$ the strong unstable spectrum is nonempty, see Figures \ref{Fig:Example_3_4}A and \ref{Fig:Example_3_4}E (resp. Figures \ref{Fig:Example_3_4}B and \ref{Fig:Example_3_4}D).  
Furthermore, when $\alpha=0$, a horizontal asymptote at $\omega=0$ exists. 
The two cases show the same behavior, but the distributed delay has an infinitely countable number of horizontal asymptotes while only one horizontal asymptote exists at $\omega=0$ when $\alpha=0$ in the fixed delay case.

\begin{figure} [!htb]
    \centering
    \includegraphics[width=1\linewidth]{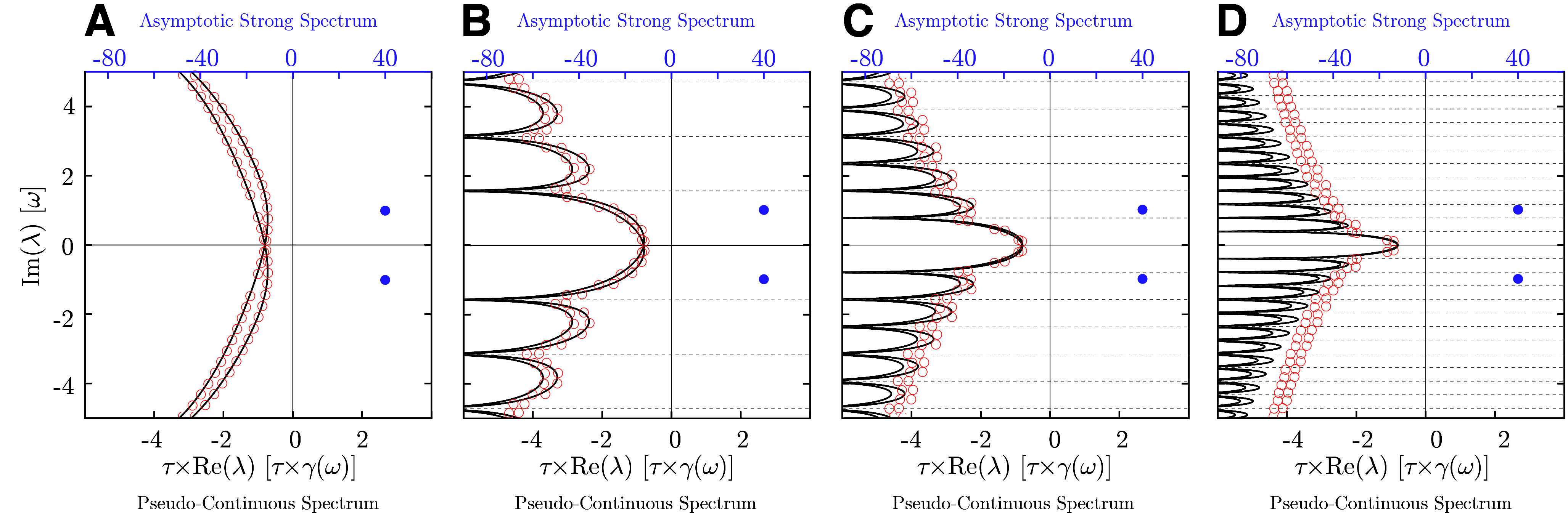}
   \caption{The effect of increasing $\rho$ with a fixed large mean delay $\gavg$  on the asymptotic calculations  in Example \ref{example_3}. Spectra of Example \ref{example_3} for large mean delay with $\gavg=20$, $\alpha=2$, and $\beta=1$: 
 {\rm (A)} $\rho=0.5$    {\rm (B)}  $\rho=2$ {\rm (C)}  $\rho=4$   {\rm (D)}  $\rho=8$. 
Black curve is the pseudo-continuous spectrum, blue dots are the asymptotic strong spectrum, red circles are numerically computed eigenvalues, and gray dashed lines are horizontal asymptotes at $\omega= k\pi/\rho$, where $k\in \mathbb{Z}\setminus\{0\}$.}
\label{Fig:Example_3_1}
\end{figure}

\begin{figure} [!htb]
    \centering
    \includegraphics[width=1\linewidth]{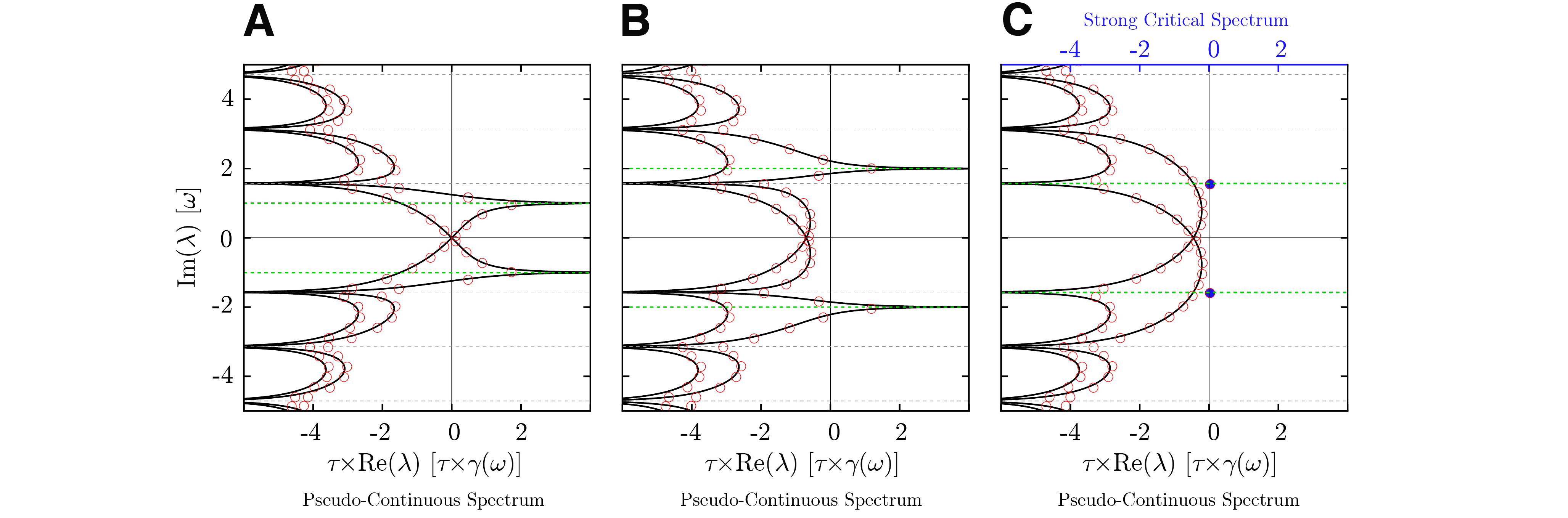}
   \caption{Spectra of Example \ref{example_3} for large mean delay $\gavg$ and $\alpha=0$.  Parameter values are   $\gavg=20$, $\rho=2$, and $\alpha=0$: 
 {\rm (A)}  $\beta=1$     {\rm (B)}  $\beta=2$   {\rm (C)}  $\beta=\pi/\rho=\pi/2$. 
Black curve is the pseudo-continuous spectrum, blue dots are the asymptotic strong spectrum, red circles are numerically computed eigenvalues, gray dashed lines are horizontal asymptotes at $\omega= k\pi/\rho$, where $k\in \mathbb{Z}\setminus\{0\}$, and green dashed lines are horizontal asymptotes at $\omega=  \pm \beta$.}
\label{Fig:Example_3_3}
\end{figure}

\begin{figure} [!htb]
    \centering
    \includegraphics[width=1\linewidth]{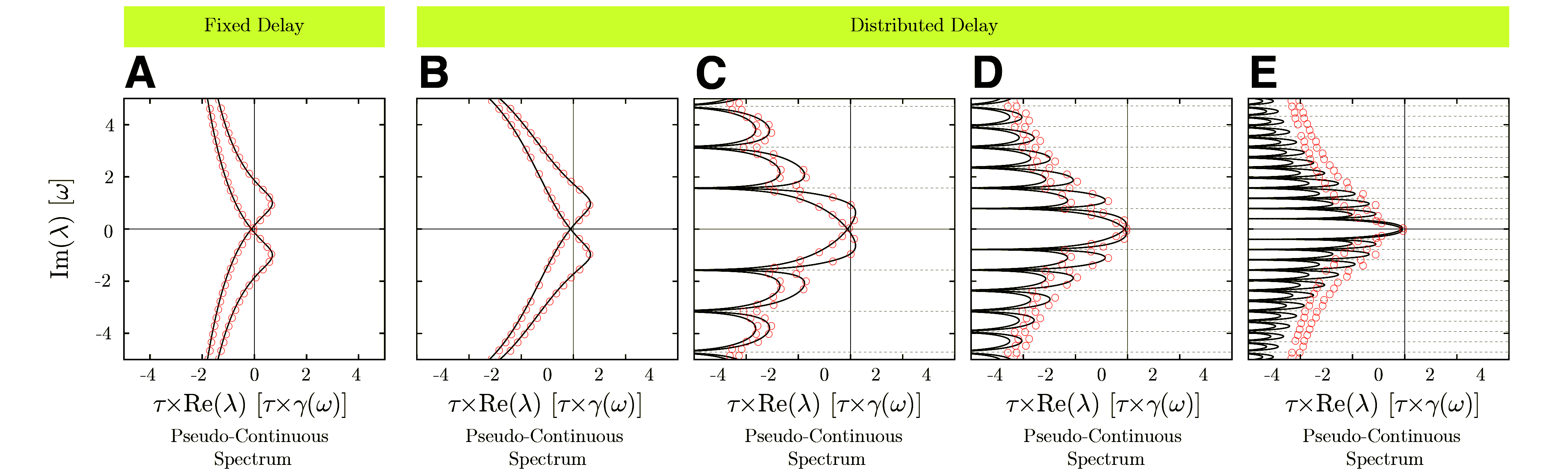}
      \caption{The effect of increasing $\rho$ with a fixed large mean delay $\gavg$ on the stability of system \eqref{example3_eq}  in Example \ref{example_3}. Spectra of Example \ref{example_3} for large mean delay with  $\gavg=20$, $\alpha=-0.5$, and $\beta=1$: 
{\rm (A)} Fixed delay  $\rho\to 0$  {\rm (B)}  $\rho=0.5$    {\rm (C)}  $\rho=2$  {\rm (D)}  $\rho=4$     {\rm (E)}  $\rho=8$. 
Black curve is the pseudo-continuous spectrum, blue dots are the asymptotic strong spectrum, red circles are numerically computed eigenvalues, and gray dashed lines are horizontal asymptotes at $\omega= k\pi/\rho$, where $k\in \mathbb{Z}\setminus\{0\}$.}
\label{Fig:Example_3_2}
\end{figure}

\begin{figure} [!htb]
    \centering
    \includegraphics[width=1\linewidth]{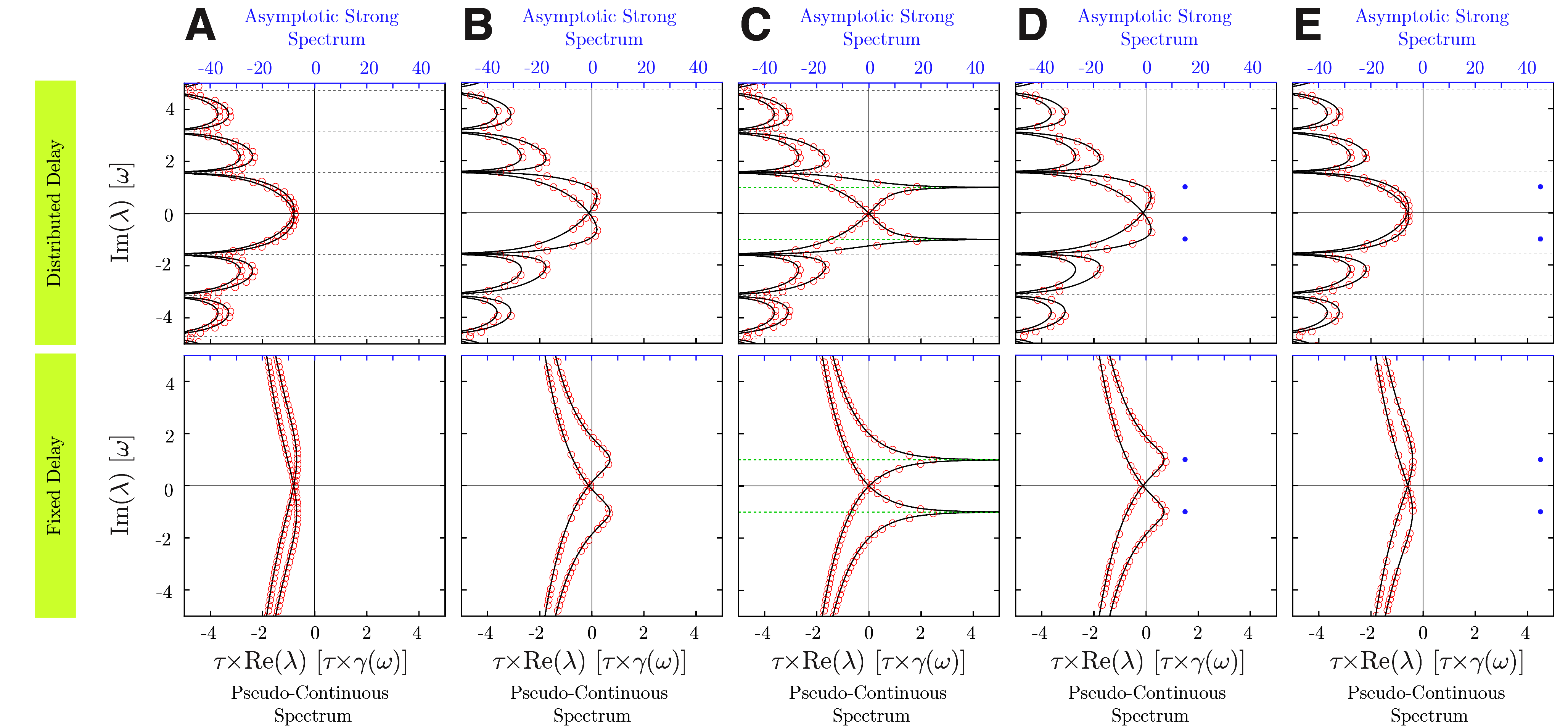}
  \caption{Comparison between uniform distributed delay  in Example \ref{example_3} and fixed delay in  \cite[Figure 3.1]{lichtner2011spectrum}. Parameter values are $\gavg=30$, $\rho=2$, and $\beta=1$:       
{\rm (A)} $\alpha=-2$   
{\rm (B)}    $\alpha=-0.5$  
{\rm (C)}    $\alpha=0$
 {\rm (D)}    $\alpha=0.5$
  {\rm (E)}    $\alpha=1.5$.
Black curve is the pseudo-continuous spectrum, blue dots are the asymptotic strong spectrum, red circles are numerically computed eigenvalues, gray dashed lines are horizontal asymptotes at $\omega= k\pi/\rho$, where $k\in \mathbb{Z}\setminus\{0\}$, and green dashed line is horizontal asymptote at $\omega=\pm \beta$ when $\alpha=0$.}
  
\label{Fig:Example_3_4}
\end{figure}

\section{Application: Wilson-Cowan Model with Delayed Self-coupling and Homeostatic Plasticity}\label{section_model}
Consider a popluation of neurons which consists of two sub-populations: one excitatory and one inhibitory. The Wilson-Cowan (WC) model is a simple yet effective  model to study the neural activity in this situation \cite{wilson1972,wilson2021evolution}. The model consists of two variables, $E$ and  $I$, representing the activity of the excitatory and inhibitory of neurons, respectively, where both variables take values in the interval $[0,1]$. Recent work has considered an extension to this model where the synaptic weight, $W^{EI}>0$, from the inhibitory to the excitatory population is adjusted to keep the activity of the excitatory population from being too large or too small \cite{hellyer2016local,nicola2018chaos}. This is called homeostatic plasticity. In previous work, we have studied networks of such Wilson Cowan nodes to represent large scale brain networks. This prior work primarily focussed on the effect of (small) time delay between nodes on stability and synchronization behaviour \cite{al2021impact,al2024distributed}. Here, we apply the results of Section~\ref{section_model} to study the spectrum and resulting dynamics in a single WC node  with uniformly distributed self-coupling delay. Solutions of this model correspond to solutions in the large brain network model where the nodes are synchronized.
The model we consider is as follows
\begin{eqnarray}\label{model}
	 \tau_2\frac{dW^{EI}}{dt} & = & I(E-p ),\nonumber\\[2mm]
	 \frac{dI}{dt} & = & \phi(W^{IE}E)  -I, \\[2mm]
\tau_1\frac{dE}{dt} & = & \phi \left(\frac{W^{E}}{2\rho} \int_{\gavg-\rho}^{\gavg+\rho} E(t-s)\, ds- W^{EI}I \right)-E. \nonumber
\end{eqnarray}
The parameters $\tauA>0$ and $\tauB>0$ are the ratios of the timescales of the excitatory population activity and of the plasticity of the inhibitory  synaptic weight, respectively, to that of the inhibitory population. The parameter $p$ is the homeostatic set-point for the activity of the excitatory population and thus satisfies $0\le p \le 1$.
The function  $\phi$  is the  transfer function which determines the proportion of the population of neurons which is active in node $i$. It is assumed to be sigmoidal, thus is strictly increasing and satisfies $0\le \phi(x)\le 1$.
Following \cite{al2021impact,nicola2021,nicola2018chaos}, we use the logistic function	
	\begin{eqnarray}\label{eq3}
	\phi(x)=\frac{1}{1+e^{-ax}}
		\end{eqnarray}	
where $a$ controls the steepness of the sigmoid.

Note that the full system has the equilibrium point $(W^{EI},I,E)=(W^{EI*},I^*,E^*)$ with
\begin{eqnarray}\label{eq:eqmdef}
W^{EI*}=\frac{W^Ep-\phi^{-1}(p)}{I^*}, \quad I^*=\phi(W^{IE}p), \quad E^*=p.
\end{eqnarray}
Let $u=(W^{EI},I,E)^T-(W^{EI*},I^*,E^*)^T$. Then, the linearization of model \eqref{model} about the equilibrium point is
\begin{equation}\label{model_linear}
    \frac{du}{dt}=A\,u(t)+\frac{1}{2\rho}B\,\int_{\gavg-\rho}^{\gavg+\rho} u(t-s)\, ds
\end{equation}
where
\begin{equation*} \arraycolsep=1.5pt \def\arraystretch{1.3}
A=
\left[
\begin{array}{c|c}
	{\arraycolsep=1.0pt \def\arraystretch{1.0}\begin{array}{ccc} && \\ & A_1 & \\&&   \end{array}}& A_2 \\
 \hline
	A_3 & A_4   
\end{array}
\right]
=
\left[
\begin{array}{cc|c}
	0& 0 &a_{13} \\[5pt]
	0 & -1 & a_{23} \\[8pt]\hline
	a_{31} & a_{32} & a_{33}   
\end{array}
\right]
=
\left[
\begin{array}{cc|c}
	 0 & 0   	&\frac{1}{\tauB}I^* \\[5pt]
	 0 & -1 & W^{IE}\phi'(W^{IE}p)  \\[8pt]\hline
-\frac{1}{\tauA}\phi'(\phi^{-1}(p))I^*  & -\frac{1}{\tauA}\phi'(\phi^{-1}(p))W^{EI*} &-\frac{1}{\tauA}
\end{array}
\right]
\end{equation*}
and
\begin{equation*} 
B=
\left[
{\arraycolsep=1.4pt \def\arraystretch{1.4}
\begin{array}{c|c}
{\arraycolsep=1.0pt \def\arraystretch{1.0}
 \begin{array}{ccc}
 ~&~&\,\,\, \\ 
 ~& \,0\, & \,\,\, \\
 ~&~&\,\,\,   
 \end{array}}& 0 \\ 
 \hline
	0 &  \widebar{B}
\end{array}}
\right]
=
\left[
\begin{array}{cc|c}
	0& 0 &0\\[5pt]
	0 & 0 & 0 \\[8pt]\hline
	0 & 0 & b_{33}   
\end{array}
\right]
=
\left[
\begin{array}{cc|c}
	0 & 0 & 0  \\[5pt]
	0& 0 & 0   \\[8pt]\hline
 0&0&-\frac{1}{\tauA} W^{IE}\phi'(W^{IE}p)
\end{array}
\right].
\end{equation*}

\begin{theorem}\label{Th_1}
    The asymptotic strong spectrum of \eqref{model_linear} is $\mathcal{A}_s=\{-1\}$.
\end{theorem}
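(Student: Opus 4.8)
The plan is to use the definitions \eqref{A+def} and \eqref{A_def} of $\mathcal{A}_+$ and $\mathcal{A}_-$ and treat the two pieces of $\mathcal{A}_s=\mathcal{A}_+\cup\mathcal{A}_-$ separately. Reading off the decomposition \eqref{matrix_A}, the block governing the delay-free subsystem is $A_1=\begin{bmatrix}0&0\\0&-1\end{bmatrix}$, so $\sigma(A_1)=\{0,-1\}$ and the only eigenvalue with negative real part is $-1$; thus $\mathcal{A}_-=\{-1\}$ immediately. The whole statement therefore reduces to proving $\mathcal{A}_+=\emptyset$, i.e.\ that the full matrix $A$ has no eigenvalue with $\Re(\lambda)>0$.

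To control $\sigma(A)$ I would compute its characteristic polynomial directly. Since the first row and the first column of $A$ each contain two zero entries, a cofactor expansion collapses and produces a monic cubic
\[ p(\lambda)=\det(\lambda I_3-A)=\lambda^3+c_2\lambda^2+c_1\lambda+c_0, \]
with $c_2=1-a_{33}$, $c_1=-a_{33}-a_{23}a_{32}-a_{13}a_{31}$ and $c_0=-a_{13}a_{31}$. I would then fix the signs of the entries from their definitions: because $\phi$ is strictly increasing we have $\phi'>0$, and together with $I^*=\phi(W^{IE}p)>0$, $W^{EI*}>0$, $W^{IE}>0$ and $\tauA,\tauB>0$ this gives $a_{33}=-1/\tauA<0$, $a_{13}a_{31}<0$ and $a_{23}a_{32}<0$. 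Consequently $c_2,c_1,c_0>0$.

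The decisive step is the Routh--Hurwitz test for a monic cubic: all roots lie in the open left half-plane precisely when $c_2>0$, $c_0>0$ and $c_2c_1>c_0$. The first two conditions are already in hand, so the argument hinges on the inequality $c_2c_1>c_0$. The observation that makes it transparent is that the constant term $c_0=-a_{13}a_{31}$ is itself one of the summands of $c_1$. Writing $r:=1/\tauA>0$ and $M:=W^{IE}W^{EI*}\phi'(W^{IE}p)\phi'(\phi^{-1}(p))\ge 0$, one has $c_2=1+r$ and $c_1=r(1+M)+c_0$, so that
\[ c_2c_1-c_0=(1+r)c_1-c_0=(1+r)\,r\,(1+M)+r\,c_0, \]
a sum of manifestly nonnegative terms that is strictly positive since $r,c_0>0$. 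Hence $c_2c_1>c_0$, the matrix $A$ is Hurwitz, $\mathcal{A}_+=\emptyset$, and combining with $\mathcal{A}_-=\{-1\}$ yields $\mathcal{A}_s=\{-1\}$.

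I expect the only genuine obstacle to be the verification of $c_2c_1>c_0$: written out in full the coefficients are cumbersome and the inequality is not obvious entry-by-entry. The key simplification is recognizing that $c_0$ reappears inside $c_1$, which forces the cancellation above; this in turn rests on the sign condition $W^{IE}W^{EI*}\ge 0$ (equivalently $M\ge 0$), so I would state the positivity assumptions on the synaptic weights explicitly among the hypotheses. Everything else---the cofactor expansion and the reading-off of signs---is routine.
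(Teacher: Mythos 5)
Your proof is correct and follows essentially the same route as the paper's: both compute the cubic $\det(\lambda I_3-A)=\lambda^3+c_2\lambda^2+c_1\lambda+c_0$, verify $c_2,c_1,c_0>0$ and $c_2c_1>c_0$ via the Routh--Hurwitz criterion (exploiting, as you do, that $c_0$ reappears as a summand of $c_1$) to get $\mathcal{A}_+=\emptyset$, and then read off $\sigma(A_1)=\{0,-1\}$ to conclude $\mathcal{A}_-=\{-1\}$. Your remark about stating $W^{IE},W^{EI*}>0$ explicitly is a fair point of hygiene, but the paper already imposes positivity of the synaptic weights in the model setup, so there is no substantive difference.
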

\begin{proof}
    First, we calculate the asymptotic strong unstable spectrum $\mathcal{A}_+$. To this end, we have
   \begin{equation}\label{A_unstable}
       \det(\lambda I_3-A)=\lambda^3+p_2\lambda^2+p_1 \lambda+p_0=0
   \end{equation}
where
\begin{align*}
    p_2&=1-a_{33}=1+\frac{1}{\tauA}>0,\qquad 
    p_1=-(a_{33}+a_{13}a_{31}+a_{23}a_{32})=\cfrac{1}{\tauA}+\cfrac{{W}^{EI*}W^{IE}\phi'(\phi^{-1}(p)) \,\phi'(W^{IE}p)}{\tauA} +p_0>0,
   \\
 p_0&=-a_{13}a_{31}=\cfrac{{I^*}^2\phi'(\phi^{-1}(p))}{\tauA\tauB}>0.
\end{align*}
Moreover
\[
p_2p_1-p_0=\frac{1}{\tauA}\left(p_1+1+{W}^{EI*}W^{IE}\phi'(\phi^{-1}(p)) \,\phi'(W^{IE}p) \right)>0.
\]
Hence, all roots in equation \eqref{A_unstable} have a negative real part by the Routh-Hurwitz criterion \cite{levine2018control}. Therefore, $\mathcal{A}_+=\emptyset$.

It is easy to check that $\sigma(A_1)=\{0,-1\}$.
Hence, $\mathcal{A}_-=\{-1\}$. Consequently, $\mathcal{A}_s=\{-1\}$.
  \end{proof}
  
Theorem \ref{Th_1} implies that for sufficiently large delay, as $\gavg\to\infty$, there exist eigenvalues approaching $-1$ while no eigenvalue approaches any values with a positive real part. From the proof of Theorem~\ref{Th_1},  $\sigma(A)$ contains only eigenvalues with negative real part. Thus  $\mathcal{A}_0=\emptyset$. 
\begin{theorem}
    The  asymptotic continuous spectrum of \eqref{model_linear} is 
 \[\mathcal{A}_c=\left\{\gamma(\omega)+i \omega \in \mathbb{C} \mid  \omega \in \mathbb{R},  \,\,  \rho\omega\neq k\pi\,\text{for}\,k\in \mathbb{Z}\right\}\] 
    where
      \begin{equation}\label{eq_1111}
       \gamma(\omega)=-\frac{1}{2}\ln\left(  \frac{(p_2\omega^2-p_0)^2+\omega^2(\omega^2-p_1)^2}{q^2\,\omega^2\,(1+\omega^2)\,\sinc^2(\rho\omega)}  \right) 
     \qquad\text{with}\qquad q=\cfrac{W^E\phi'(\phi^{-1}(p))}{\tauA}>0.     
      \end{equation}
\end{theorem}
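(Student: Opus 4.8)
The plan is to specialize the general machinery of Section~\ref{pseudo_spectrum}---in particular Theorem~\ref{Thm_App_1}, Theorem~\ref{theorem_rescaled_pseudo_continuous}, and the definition \eqref{A_c}---to the $3\times 3$ system \eqref{model_linear}. First I would record the block data: here $n=3$, $\operatorname{rank}(B)=d=1$, the delayed block is the scalar $\widebar B=q$, and $A_1=\operatorname{diag}(0,-1)$ so that $\sigma(A_1)=\{0,-1\}$. Since $d=1$, Theorem~\ref{Thm_App_1} guarantees a single spectral curve $\gamma(\omega)$, and the polynomial $p_\omega(x)=\det(i\omega I_3-A-x\,\sinc(\rho\omega)B)$ is affine in $x$, say $p_\omega(x)=p_\omega(0)+c_1\,x$, because the only $x$-dependence sits in the $(3,3)$ entry.

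Next I would compute the two coefficients explicitly. Setting $x=0$ gives $p_\omega(0)=\det(i\omega I_3-A)$, which by the characteristic polynomial \eqref{A_unstable} from the proof of Theorem~\ref{Th_1} equals
\[
p_\omega(0)=(i\omega)^3+p_2(i\omega)^2+p_1(i\omega)+p_0=(p_0-p_2\omega^2)+i\,\omega(p_1-\omega^2).
\]
For the linear coefficient I would invoke the leading-order formula in Theorem~\ref{Thm_App_1} (equivalently, expand the determinant along the delayed row), giving
\[
c_1=\sinc(\rho\omega)\,\det(i\omega I_2-A_1)\,\widebar B=q\,\sinc(\rho\omega)\,i\omega(i\omega+1),
\]
where $\det(i\omega I_2-A_1)=i\omega(i\omega+1)$ since $A_1=\operatorname{diag}(0,-1)$.

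Since $p_\omega$ is affine and $c_1\neq 0$ precisely when $\omega\in\mathcal{W}$, its unique root is $x^\ast=-p_\omega(0)/c_1$, and the spectral-curve condition $p_\omega(e^{-(\gamma+i\varphi)})=0$ forces $e^{-(\gamma+i\varphi)}=x^\ast$. As $x^\ast$ is finite and nonzero (the numerator $p_\omega(0)$ cannot vanish for real $\omega$ because $\sigma(A)$ has no purely imaginary eigenvalue, per the Routh--Hurwitz step in Theorem~\ref{Th_1}), only the modulus constrains $\gamma$, with $\varphi=-\arg(x^\ast)$ supplying the phase. Hence $e^{-\gamma}=|p_\omega(0)|/|c_1|$, i.e. $\gamma(\omega)=-\tfrac12\ln\!\big(|p_\omega(0)|^2/|c_1|^2\big)$. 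I would then substitute $|p_\omega(0)|^2=(p_2\omega^2-p_0)^2+\omega^2(\omega^2-p_1)^2$ and, using $|i\omega(i\omega+1)|^2=\omega^2(1+\omega^2)$ together with $\widebar B^2=q^2$, the identity $|c_1|^2=q^2\,\sinc^2(\rho\omega)\,\omega^2(1+\omega^2)$, which reproduces the numerator and denominator of \eqref{eq_1111} exactly.

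Finally I would pin down the domain, and this coefficient/modulus bookkeeping is where the main care is needed. By Theorem~\ref{Thm_App_1} the degree of $p_\omega$ equals $1$ (so the curve is well defined and finite) exactly on $\mathcal{W}=\{\,i\omega\notin\sigma(A_1)\ \text{and}\ \rho\omega\neq k\pi,\ k\in\mathbb{Z}\setminus\{0\}\,\}$; since $\sigma(A_1)=\{0,-1\}$ the clause $i\omega\notin\sigma(A_1)$ reduces to $\omega\neq0$, which is precisely the $k=0$ case of $\rho\omega\neq k\pi$, so the two conditions merge into $\rho\omega\neq k\pi$ for all $k\in\mathbb{Z}$. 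Because $\mathcal{A}_0=\emptyset$ was already established after Theorem~\ref{Th_1}, the relevant definition is \eqref{A_c}, and I would verify that no excluded value of $\gamma$ is admitted: $\gamma\to-\infty$ only as $\omega\to0$ (where $0\in\sigma(A_1)$) and as $\rho\omega\to k\pi$ (where $\sinc\to0$ sends $|c_1|\to0$, producing the countable family of horizontal asymptotes), both lying outside the admissible set, while $\gamma\to+\infty$ never occurs since $\sigma(A)$ contains no imaginary eigenvalue. The principal obstacle is thus the clean identification of $c_1=q\,\sinc(\rho\omega)\det(i\omega I_2-A_1)$ and the modulus tracking that makes \eqref{eq_1111} emerge, with the secondary subtlety being the folding of the $\omega=0$ exclusion into the single condition $\rho\omega\neq k\pi$, $k\in\mathbb{Z}$.
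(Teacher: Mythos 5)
Your proposal is correct and follows essentially the same route as the paper: both specialize the general framework (Theorem~\ref{Thm_App_1}, rank$(B)=1$, definition~\eqref{A_c}) to obtain a single spectral curve, and your step of solving the affine equation $p_\omega(x^\ast)=0$ and taking the modulus of $x^\ast$ is exactly the paper's separation into real and imaginary parts followed by squaring and adding, just written in complex-modulus form. The sign you assign to $c_1$ differs from the coefficient one reads off $\Delta_4$, but this is harmless since only $|c_1|$ enters $\gamma(\omega)$ and the phase $\varphi$ absorbs the sign.
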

\begin{proof}
First, note that  the characteristic equation associated with \eqref{model_linear} is
\[
\Delta_4(\lambda)=\lambda^3+p_2\lambda^2+p_1 \lambda+p_0-q\lambda(\lambda+1)\,e^{\lambda\gavg}\sinhc(\rho\lambda)=0.
\]
Hence, we obtain
\[p_{\omega}(x)=-i\omega^3-p_2\omega^2+ip_1\omega+p_0-q\omega(i-\omega)\,\sinc(\rho\omega)\,x.\]
 Assume $\rho\omega\neq k\pi$ for $k\in \mathbb{Z}\setminus\{0\}$.
 Since $\text{rank}(B)=1$ and $i\omega\notin\sigma(\mathcal{A}_1)$, we have only one spectral curve $\gamma(\omega)$ satisfying $p_\omega\left(e^{-(\gamma(\omega)+i\varphi)}\right)=0$. 
    By separating the real and imaginary parts of this equation, we obtain
  \begin{neweq_non} 
 p_2 \omega^2-p_0&= \omega\, q\, \sinc(\rho \omega)\, e^{-\gamma(\omega)}\, \left[\omega\cos(\varphi)-\sin(\varphi)  \right] \\
\omega^3-p_1\omega&=-\omega\, q\, \sinc(\rho \omega)\, e^{-\gamma(\omega)}\, \left[\omega\sin(\varphi)+\cos(\varphi)  \right] .
\end{neweq_non}  
  Squaring and adding both equations lead to 
  \[( p_2 \omega^2-p_0)^2+(\omega^3-p_1\omega)^2=\omega^2\, q^2\, \sinc^2(\rho \omega)\, e^{-2\gamma(\omega)}\,\]
 Consequently, we have
      \begin{equation}\label{gamma_fun_e}
      \gamma(\omega)=-\frac{1}{2}\ln\left(  \frac{(p_2\omega^2-p_0)^2+\omega^2(\omega^2-p_1)^2}{q^2\,\omega^2\,(1+\omega^2)\,\sinc^2(\rho\omega)}  \right).    
      \end{equation}
Since $q>0$ and the numerator inside the log function is positive, we have $ \gamma(\omega)\to -\infty$ if  $\omega\rho=k\pi$ for $k\in \mathbb{Z}$. This completes the proof.  
\end{proof}

In the following, we set the model \eqref{model} parameter values as in \cite{nicola2018chaos, nicola2021, al2021impact, al2024distributed}:  
\begin{equation}  
p = 0.2, \quad a = 5, \quad \tau_1 = 1, \quad \tau_2 = 5.  
\label{params}  
\end{equation}  
These values are used to plot the pseudo-continuous spectrum curves. We also numerically computed the eigenvalues, demonstrating consistency with the theoretical results and analyzing the effect of $\rho$ on stability.

In Figures \ref{Fig:model_2} and \ref{Fig:model_1}, we notice, similar to the previous examples, that the pseudo-continuous spectrum closely tracks the numerically computed eigenvalues. Figure \ref{Fig:model_2} shows that the equilibrium point becomes stable as $\rho$ increases. On the other hand, Figure \ref{Fig:model_1} explores the influence of the parameter $W^{IE}$ on the stability of the equilibrium point. We notice that the equilibrium point starts off unstable when $W^{IE}$ is small. Then, as $W^{IE}$ increases the equilibrium point gains stability, and then becomes unstable again. Numerical simulations of the full nonlinear system \eqref{model} confirm the stability predictions of the linearization and show that the system transitions to stable oscillations when stability is lost, likely via a Hopf bifurcation.
 
\begin{figure} [!htb]
    \centering
    \includegraphics[width=1\linewidth]{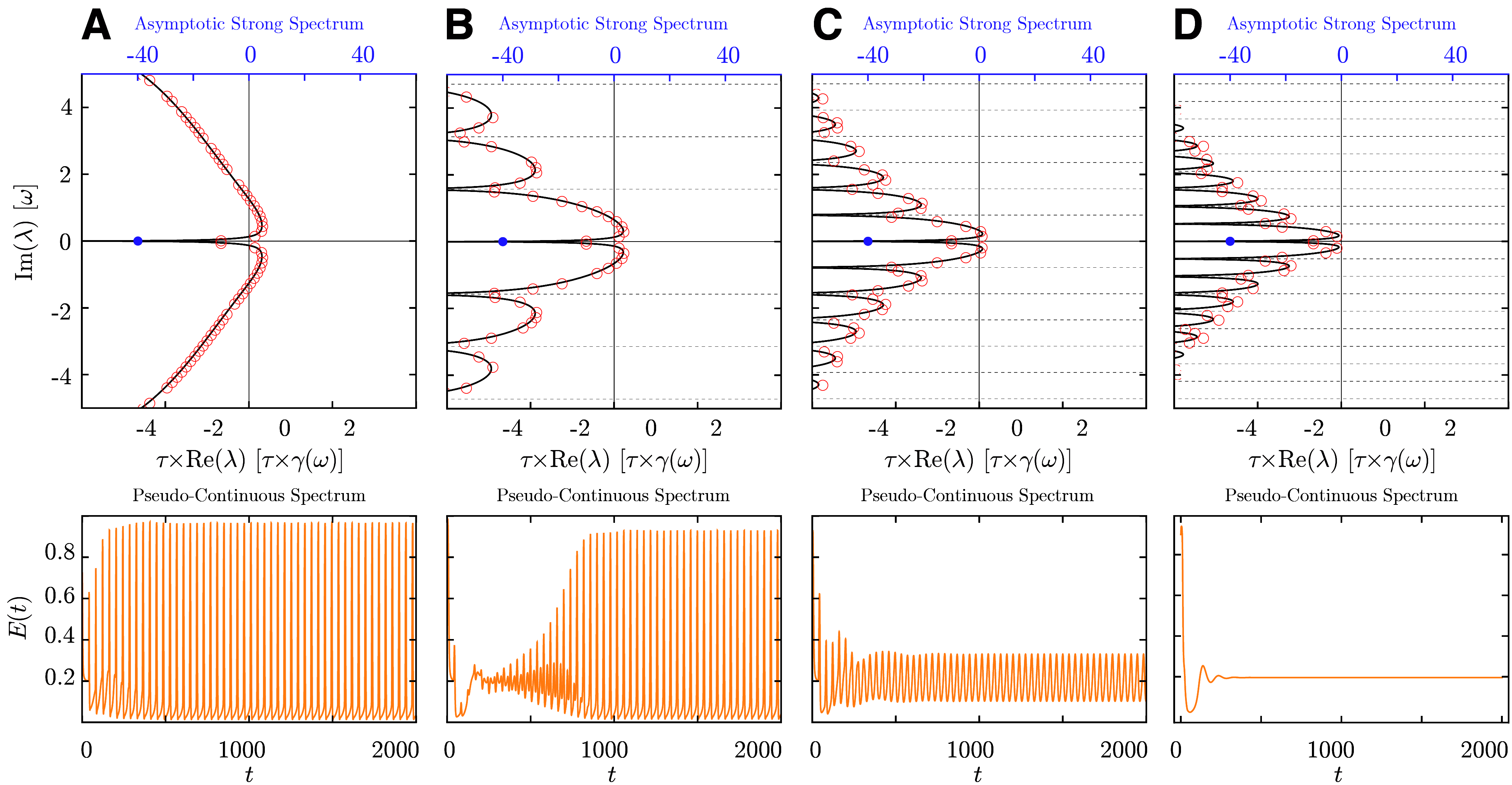}
       \caption{The effect of increasing $\rho$ with a fixed  large $\gavg$. 
    Row 1: Spectra of system \eqref{model_linear}.  
     Row 2: Time series for the solutions  of system \eqref{model}. 
Parameter values are  $\gavg=40$, $W^E=2$, and $W^{IE}=4$: {\rm (A)} $\rho=0.5$     {\rm (B)} $\rho=2$  {\rm (C)} $\rho=4$    {\rm (D)} $\rho=6$. 
Black curve is the pseudo-continuous spectrum, blue dots are the asymptotic strong spectrum, red circles are numerically computed eigenvalues, and gray dashed lines are horizontal asymptotes at $\omega= k\pi/\rho$, where $k\in \mathbb{Z}\setminus\{0\}$.}
\label{Fig:model_2}
\end{figure}

\begin{figure} [!htb]
    \centering
    \includegraphics[width=1\linewidth]{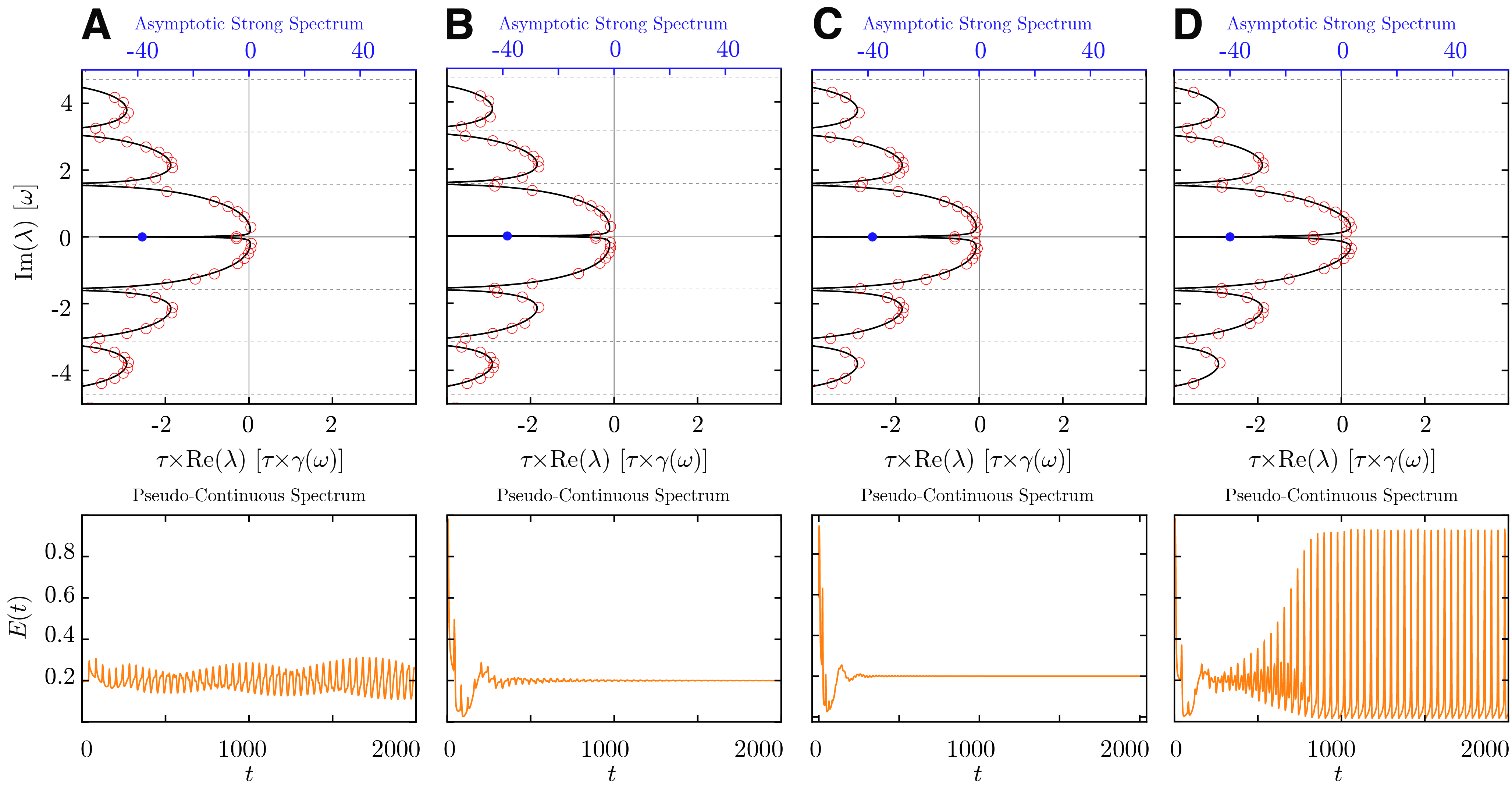}
    \caption{The effect of increasing $W^{IE}$ with fixed $\rho$ and large mean delay  $\gavg$. 
    Row 1: Spectra of system \eqref{model_linear}.  
     Row 2: Time series for the solutions of system \eqref{model}. 
Parameter values are  $\gavg=40$, $\rho=2$, and $W^E=2$:  {\rm (A)}  $W^{IE}=0.5$      {\rm (D)}  $W^{IE}=1$       {\rm (C)}  $W^{IE}=2$ {\rm (D)}  $W^{IE}=4$. 
Black curve is the pseudo-continuous spectrum, blue dots are the asymptotic strong spectrum, red circles are numerically computed eigenvalues, and gray dashed lines are horizontal asymptotes at $\omega= k\pi/\rho$, where $k\in \mathbb{Z}\setminus\{0\}$.}
\label{Fig:model_1}
\end{figure}

To further explore the effect of parameters on the stability of the equilibrium point,
 we determine points in the $(\rho,W^{IE},W^E)$-space where $\gamma(\omega)$ changes from negative to positive.
Figure \ref{Fig:model_bif_3d}  shows a surface of such points in $(\rho,W^{IE},W^E)$-space. 
We generate this diagram as follows.
We use the \textit{Wolfram Mathematica} command \textsf{NMaximize} to  
calculate the absolute maximum value of $\gamma(\omega)$ in \eqref{eq_1111}
\[
\gamma^*=\max\{\gamma(\omega)\,:\,\omega\in\mathbb{R}\},
\]
on a grid over the  $(\rho,W^{IE},W^E)$-space. Then, we approximate the boundary of the regions where $\gamma^*$ changes its sign from negative to positive using the commands \textsf{ListInterpolation} and  \textsf{RegionPlot}. The surface shown in Figure~\ref{Fig:model_bif_3d} thus forms the boundary of the stability region of the equilibrium and each point on the surface is a (potential) Hopf bifurcation point. 
In Figure \ref{Fig:model_bif_2d}, we show the projections of the  3D diagram on $(W^{IE},W^E)$- and $(\rho,W^E)$-plane. The curves confirm that the increasing $\rho$ stabilizes the equilibrium point, while increasing $W^E$ destabilizes it and increasing $W^{IE}$ can be stabilizing or destabilizing.

The simulations in Figures~\ref{Fig:model_2}, \ref{Fig:model_1} and for other parameter values we have explored, show that the nonlinear system exhibits oscillations when the equilibrium point is unstable. This supports the assumption that the stability boundaries are curves of Hopf bifurcation. Interestingly, in Figures~\ref{Fig:model_2}B and ~\ref{Fig:model_1}D when the spectrum has multiple pairs of eigenvalues with positive real part the system only exhibits oscillations, although more complex transient behaviour is observed.

The stability boundaries/Hopf bifurcation curves in Figure~\ref{Fig:model_bif_2d}A and associated behaviour of the nonlinear similar are very similar to those we found for small delay in our previous work ~\cite{al2021impact,al2024distributed}. This led us to wonder if the behaviour was similar for all delays. Thus we considered the case where the mean delay is an order of magnitude larger than our previous work. 
Figure \ref{Fig:small_delay}A shows the Hopf curves using the asymptotic spectrum with the true spectrum in this case. Clearly the asymptotic spectrum is not accurate, which is not surprising as $\tau_m$ is not very large. More interestingly, the shape of the true spectrum is quite different, and the simulations of the full system (Figures \ref{Fig:small_delay}B-\ref{Fig:small_delay}E) show more complex behaviour than oscillation occurs.  Thus  although the behaviour of the system for large and small delays is basically the same, this is not true for all delays.

\begin{figure} [!htb]
    \centering
    \includegraphics[width=1\linewidth]{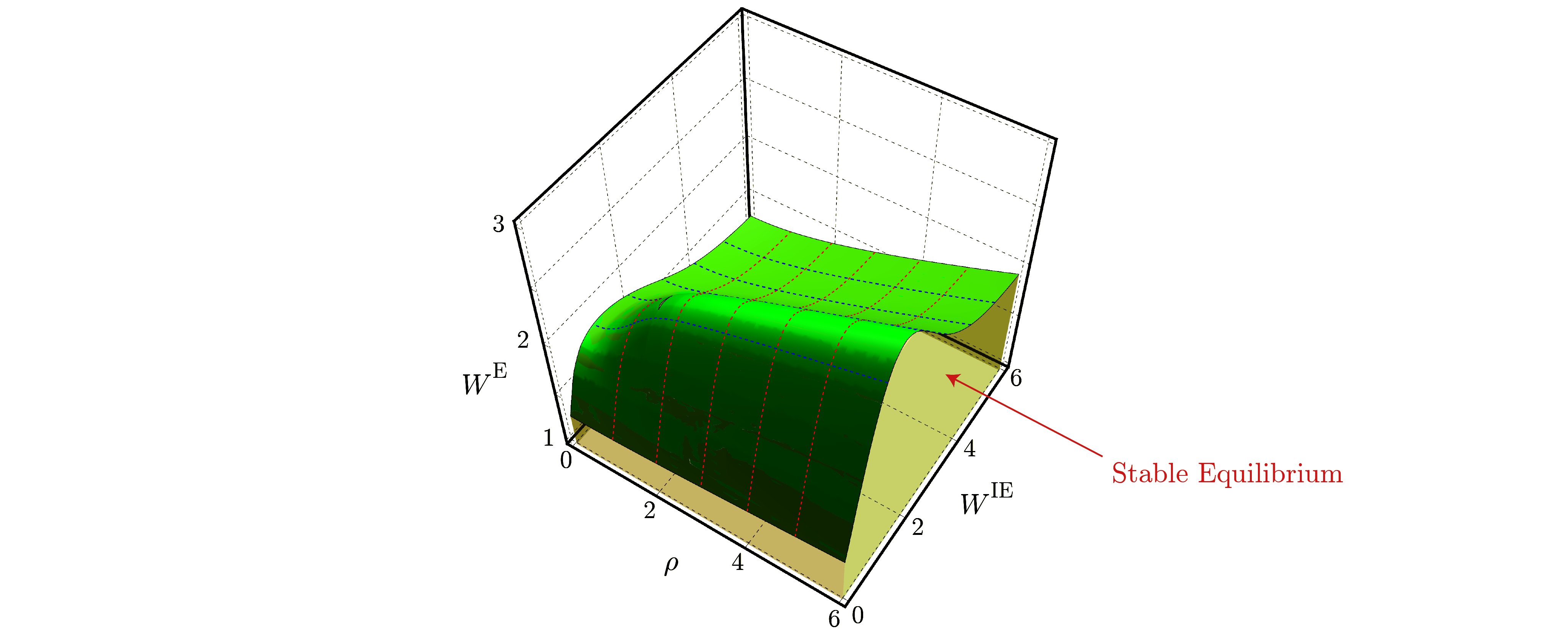}
       \caption{Three-parameter bifurcation diagram in $(\rho,W^{IE},W^E)$-space for large mean delay $\gavg\to\infty$. Below the surface the equilibrium point is asymptotically stable. Above the surface it is unstable. Points on the surface correspond to points where the equilibrium point has a pair of pure imaginary eigenvalues and are (potential) Hopf bifurcation points.}
\label{Fig:model_bif_3d}
\end{figure}

  \begin{figure} [!htb]
    \centering
    \includegraphics[width=1\linewidth]{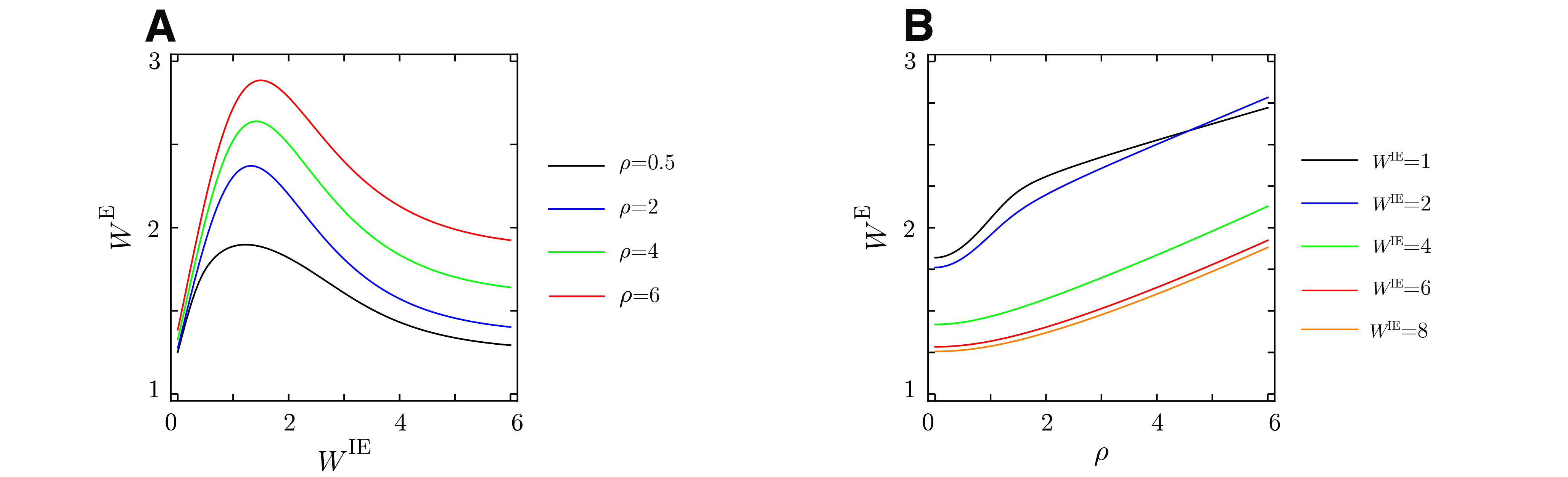}
       \caption{Bifurcation curves for large mean delay $\gavg\to\infty$.
        {\rm (A)} Hopf curves in the $(W^{IE},W^E)$ parameter space;  {\rm (B)} Hopf curves in the $(\rho,W^E)$ parameter space. The region under the curves corresponds to stability of the equilibrium, while the area above corresponds to instability.}
\label{Fig:model_bif_2d}
\end{figure}

\begin{figure} [!htb]
    \centering
    \includegraphics[width=1\linewidth]{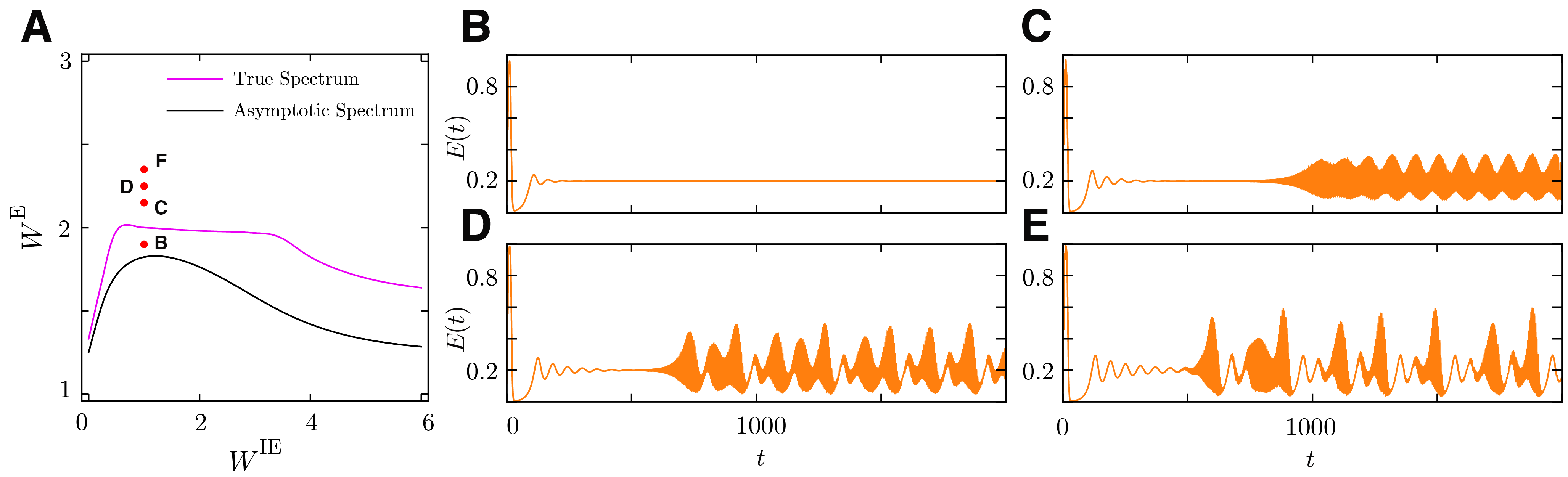}
       \caption{Comparison of the asymptotic spectrum with the true spectrum for small mean delay  $\gavg$. {\rm (A)} Hopf curves in the $(W^{IE},W^E)$ parameter space when  $\gavg=4$ and $\rho=0.1$;
       {\rm (B)-(E)} Time series for the solutions of system \eqref{model} with 
       $W^{IE}=1$ and  $W^E= 1.9,~2.15,~2.25,~2.35$, respectively.
       }
    \label{Fig:small_delay}   
\end{figure}

\section{Discussion}
\label{sec_discussion}

In this paper, we extended the results of \cite{lichtner2011spectrum} for linear systems with large fixed delay to those with a uniform distribution of delays. We showed that when the mean delay of the distribution is large enough the spectrum is a union of the following sets.
\begin{enumerate}
  \item[(i)] The strong critical spectrum $\mathcal{A}_0$, which has pure imaginary eigenvalues that do not depend on $\tau_m$.
    \item[(ii)] The strong spectrum $\mathcal{P}^\epsilon_s$,  which is a finite set of eigenvalues that approach some fixed value as  $\gavg\rightarrow\infty$. The set of limiting values of these eigenvalues is called the asymptotic strong spectrum, $\mathcal{A}_s$, which consists of two subsets: the asymptotic strong unstable spectrum, $\mathcal{A}_+$, and asymptotic strong stable spectrum, $\mathcal{A}_-$, that, respectively, contain eigenvalues with positive and negative real parts.   
 \item[(iii)] The pseudo-continuous spectrum $\mathcal{P}_c^{\epsilon}$, which has eigenvalues  with real parts that scale as $\epsilon=1/\gavg$, $0<\epsilon\ll 1$, that is, for $\lambda\in \mathcal{P}_c^{\epsilon}$, we have
\[\lambda=\epsilon \gamma+i\omega +O\left(\epsilon^2\right),\qquad \text{for}\,\gamma, \omega\in\mathbb{R},\gamma\ne 0\] 
Note that these eigenvalues all approach the imaginary axis as $\epsilon\rightarrow 0$, i.e., $\gavg\rightarrow\infty$.
\end{enumerate}

We gave explicit descriptions of the sets $\mathcal{A}_0$ and $\mathcal{A}_\pm$  and of the curves, $\gamma(\omega)$, representing the scaled real part of the eigenvalues of $\mathcal{P}_c^{\epsilon}$ in terms of the model parameters. The asymptotic strong unstable and stable spectra are identical to that for the system with a fixed delay. The critical strong spectrum may be different, in particular, $\mathcal{A}_0$ for the system with a fixed delay is a subset of  $\mathcal{A}_0$ for the same system with a uniform distributed delay.
We argued that a larger half-width of the distribution, $\rho$, may require a larger mean delay for the asymptotic spectra to be a good representation of the true spectra. Since the fixed delay corresponds to a distribution with $\rho=0$, the approximation may be better for a fixed large delay than a distribution with the same mean delay.

We applied our results to three simple linear models and to the linearization of a complicated nonlinear neural field model and found the following.
The asymptotic pseudo-continuous spectrum was considerably more complex for the system with distributed delay. In particular, the curves of the asymptotic pseudo-continuous spectrum always have a countable infinity of horizontal tangent lines, corresponding to $\omega=k\pi/\rho,\ k \in \mathbb{Z}\setminus\{0\}$, whereas the fixed-delay system has at most one horizontal tangent at $\omega=0$.

In the examples we explored, we found cases where some subset of the stability region is the same for the system with fixed and distributed delay, but more generally, the stability depends on the size of the distribution half-width. In some cases, increasing the distribution half-width can stabilize a system which is unstable with fixed delay. In the nonlinear example we studied, the asymptotic spectrum enabled investigation of potential bifurcations with respect to several parameters.

In this paper, we chose to focus on the uniform distribution. While the details of the proofs rely on this specific distribution, the main asymptotic arguments remain valid, and similar extensions should be possible for other distributions.
\section*{Acknowledgements}
IAD acknowledges the support of the Jordan University of Science and Technology through funding for a research visit to the University of Waterloo (grant number 20240164). SAC acknowledges the support of the Natural Sciences and Engineering Research Council of Canada. BR acknowledges the support of the UKH and CAMB.
\bibliographystyle{ieeetr}   
\bibliography{References.bib}
\appendix
\numberwithin{equation}{section}
\numberwithin{figure}{section}
\numberwithin{table}{section}

\setcounter{theorem}{0}
\renewcommand{\thetheorem}{A.\arabic{theorem}}
\setcounter{lemma}{0}
\renewcommand{\thelemma}{A.\arabic{lemma}}
\makeatletter
\def\@seccntformat#1{\@ifundefined{#1@cntformat}%
   {\csname the#1\endcsname\quad} 
   {\csname #1@cntformat\endcsname} 
}
\section{Appendix: Proofs of the Properties of the Asymptotic Spectrum}\label{sec:appendix}

The results in this Appendix follow those of \cite{lichtner2011spectrum} with modifications to account for the distributed delay and some rearrangements for clarity. For completeness and clarity we have reproduced the full proofs. The correspondence between the results presented in our paper and those in \cite{lichtner2011spectrum} is as follows
\begin{multicols}{2}
	\begin{itemize}
		\item Theorem 2 - Lemma 7
		\item Theorem 3 - Corollary 5
		\item Theorem 4 - Corollary 6
		\item Theorems A.1 and A.3 - Theorem 3
		\item Theorem A.2 - Theorem 4
		\item Lemma A.1 - Lemma 10
		\item Lemma A.2 - Lemma 9
		\item Lemma A.3 - Lemma 12
	\end{itemize}
\end{multicols}

\noindent Let $\epsilon=1/\gavg$. Consequently, the  characteristic equation \eqref{charact_eq_tau} can be written as 
\begin{equation}\label{charact_eq_epsilon}
	\Delta(\lambda;\epsilon):=\det\left( \lambda I_n-A-B\,e^{-\lambda/\epsilon}\,\sinhc(\lambda\rho)\right)=0
\end{equation}

\begin{remark} 
	Note that the original model \eqref{model_linear_discrete}-\eqref{unieq1} is well defined only if $\rho\le \gavg$, i.e., $\epsilon\le 1/\rho$. Thus, our results will include hypotheses to ensure this requirement is satisfied.
\end{remark}

We adopt the following definitions from \cite{lichtner2011spectrum}.  
Define the spectrum to be the set
\[\sigmaeps=\{ \lambda\in\mathbb{C} \, :   \, \Delta(\lambda;\epsilon)=0\}, \]
and the the strong spectrum to be the set
\[
\sigmaeps_s:=\sigmaeps_{+} \medcup \sigmaeps_{-},
\]
where
\[
\sigmaeps_{+}:=\sigmaeps \medcap \ball_r\left(\mathcal{A}_{+}\right), \quad \sigmaeps_{-}:=\sigmaeps\medcap \ball_r\left(\mathcal{A}_{-}\right)
\]
and 
\begin{equation}
	r:=\frac{1}{3} \min \left\{r_0, \operatorname{dist}\left(\mathcal{A}_s, i \mathbb{R}\right)\right\} 
	\qquad \text{with}\qquad  
	r_0:=\min \left\{|\lambda-\mu| \, :   \, \lambda, \mu \in \mathcal{A}_s, \lambda \neq \mu\right\}.
	\label{r_def}
\end{equation}
Here, we  denote 
\[\ball_r(M):=\bigcup_{x \in M}\{z \in \mathbb{C}\, :   \, |x-z |<r\}\] be the set of balls around the set $M\subset \mathbb{C}$. 

Finally, we define
the pseudo-continuous spectrum  
\[
\sigmaeps_c=\sigmaeps \backslash \left(\sigmaeps_s  \medcup \mathcal{A}_0\right).
\]
and the rescaled pseudo-continuous spectrum 
\[
\pieps_c:=\seps\left(\sigmaeps_c\right),
\]
where $S: \mathbb{C} \rightarrow \mathbb{C}$ is the rescaling
\begin{equation}
	\seps(a+i b):=a \frac{1}{\epsilon}+i b \quad \text { for } \quad a, b \in \mathbb{R}.
	\label{Sepsilon}
\end{equation}


The following theorem demonstrates that as $\gavg \to \infty$ (or equivalently, $\epsilon \to 0$),  the strong spectrum $\sigmaeps_s$   approaches $\mathcal{A}_s$.
\begin{theorem}\label{thm_append_1} 
	\begin{enumerate}[series=MyList,label= ({\roman*})]
		\item Let $\lambda \in \mathcal{A}_{+}$. For any $0 < \delta \leq r$, there exists $0<\epsilon_0\le 1/\rho$ so that whenever $0 < \epsilon < \epsilon_0$, the number of eigenvalues, counting multiplicity, of \eqref{model_linear_prel} within $\ball_\delta(\lambda)$ equals the multiplicity of $\lambda$ as an eigenvalue of $A$.
		
		\item  Let $\lambda \in \mathcal{A}_{-}$. For any $0 < \delta \leq r$, there exists $0<\epsilon_0\le 1/\rho$ so that whenever $0 < \epsilon < \epsilon_0$, the number of eigenvalues (counted with multiplicities) of \eqref{model_linear_prel} within $\ball_\delta(\lambda)$,  equals the multiplicity of $\lambda$ as an eigenvalue of $A_1$.
	\end{enumerate}
\end{theorem}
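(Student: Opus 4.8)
The plan is to prove both parts by a contour-counting (Rouch\'e) argument applied to the entire function $\lambda\mapsto\Delta(\lambda;\epsilon)$ for fixed small $\epsilon$. Fix $\lambda_*$ in $\mathcal{A}_{+}$ (part (i)) or in $\mathcal{A}_{-}$ (part (ii)) and let $0<\delta\le r$. By the choice of $r$ in \eqref{r_def}, the closed ball $\overline{\ball_\delta(\lambda_*)}$ contains no other point of $\mathcal{A}_s$, and it is bounded away from the imaginary axis: on the boundary circle $\partial\ball_\delta(\lambda_*)$ one has $|\Re(\lambda)-\Re(\lambda_*)|\le|\lambda-\lambda_*|=\delta$, while $|\Re(\lambda_*)|\ge\operatorname{dist}(\mathcal{A}_s,i\mathbb{R})\ge 3r\ge 3\delta$, so $\Re(\lambda)$ keeps the sign of $\Re(\lambda_*)$ and stays bounded away from $0$. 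The goal is then to compare the zeros of $\Delta(\cdot;\epsilon)$ inside this ball with those of an $\epsilon$-independent limit function.

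For part (i), $\Re(\lambda_*)>0$, so on the contour $\Re(\lambda)\ge c>0$ for some constant $c$, whence $e^{-\lambda/\epsilon}\to 0$ uniformly as $\epsilon\to 0$. Since $\sinhc(\lambda\rho)$ is bounded on the compact contour, the delayed term $B\,e^{-\lambda/\epsilon}\sinhc(\lambda\rho)\to 0$ uniformly, and because $\det$ is continuous in the matrix entries, $\Delta(\lambda;\epsilon)\to\Delta_0(\lambda):=\det(\lambda I_n-A)$ uniformly on $\partial\ball_\delta(\lambda_*)$. As $\lambda_*$ is the only zero of $\Delta_0$ in the closed ball, $|\Delta_0|\ge m>0$ on the contour; choosing $\epsilon_0$ so small that $|\Delta(\lambda;\epsilon)-\Delta_0(\lambda)|<m\le|\Delta_0(\lambda)|$ there, Rouch\'e's theorem gives that $\Delta(\cdot;\epsilon)$ and $\Delta_0$ have equally many zeros, counted with multiplicity, inside the ball. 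That number is exactly the algebraic multiplicity of $\lambda_*$ as an eigenvalue of $A$, which proves (i).

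Part (ii) is the delicate case and the main obstacle: when $\Re(\lambda_*)<0$ the factor $e^{-\lambda/\epsilon}$ blows up, so $\Delta(\cdot;\epsilon)$ has no finite limit. I would resolve this using the equivalent determinant \eqref{model_linear_A_ustable_1}, obtained from \eqref{charact_eq_epsilon} by multiplying the last $d$ rows by $e^{\lambda/\epsilon}$; this multiplies the determinant by the nonvanishing factor $e^{d\lambda/\epsilon}$ and therefore changes neither the location nor the multiplicity of zeros. On the contour $\Re(\lambda)<0$, so now $e^{\lambda/\epsilon}\to 0$ uniformly, the blocks $-A_3\,e^{\lambda/\epsilon}$ and $(\lambda I_d-A_4)\,e^{\lambda/\epsilon}$ vanish in the limit, and the rescaled determinant converges uniformly to the determinant of a block lower-triangular matrix,
\[
\tilde\Delta_0(\lambda):=\det\!\left(\lambda I_{n-d}-A_1\right)\,\det\!\left(-\widebar{B}\,\sinhc(\lambda\rho)\right).
\]
The second factor equals $(-1)^d\det(\widebar{B})\,\sinhc^d(\lambda\rho)$ and is nonzero throughout the ball: $\widebar{B}$ is invertible and $\sinhc(\lambda\rho)=0$ only when $\lambda\rho\in i\pi\mathbb{Z}$, which the ball (lying in $\Re(\lambda)<0$) avoids. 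Hence the zeros of $\tilde\Delta_0$ inside the ball are precisely those of $\det(\lambda I_{n-d}-A_1)$, i.e.\ the algebraic multiplicity of $\lambda_*$ as an eigenvalue of $A_1$, and Rouch\'e applied exactly as in part (i) yields (ii).

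Finally, in each part I would shrink $\epsilon_0$ if necessary so that $\epsilon_0\le 1/\rho$, keeping the underlying model \eqref{model_linear_prel} well defined. The only genuinely nontrivial point is the rescaling trick in (ii); once the determinant is rewritten so that the exponential decays rather than grows, the uniform convergence on the contour and the Rouch\'e comparison are routine.
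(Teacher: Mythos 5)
Your proof is correct, and while part (i) is essentially the paper's argument (the paper invokes Hurwitz's theorem where you invoke Rouch\'e; both rest on uniform convergence of $\Delta(\cdot;\epsilon)$ to $\det(\lambda I_n-A)$ on a compact set in $\Re(\lambda)>0$), part (ii) takes a genuinely different and more elementary route. The paper shifts coordinates to $\widehat{\Delta}_{\lambda}(z;\epsilon)=\Delta(z+\lambda;\epsilon)$ and applies a Schur-complement factorization $\det(P_4)\det(P_1-P_2P_4^{-1}P_3)$, which forces it to prove invertibility of $Q_1=e^{(z+\lambda)/\epsilon}\,\cschc((z+\lambda)\rho)\,P_4$ for small $\epsilon$, expand $Q_1^{-1}=-\widebar{B}^{-1}+O\bigl(\lvert e^{(z+\lambda)/\epsilon}\rvert\bigr)$, and only then assemble the rescaled identity whose zeros are counted by Hurwitz. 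You instead multiply the last $d$ rows by $e^{\lambda/\epsilon}$ --- in effect making rigorous exactly the heuristic reduction \eqref{model_linear_A_ustable_1}--\eqref{model_linear_A_ustable} from Section 2.2 --- and pass to the uniform limit of the whole determinant, which needs nothing beyond continuity of $\det$ in the matrix entries together with uniform decay of $e^{\lambda/\epsilon}$ on a compact subset of $\Re(\lambda)<0$. The price is that your limit function retains the factor $\det\bigl(-\widebar{B}\,\sinhc(\lambda\rho)\bigr)$, so you must (and correctly do) verify it is nonvanishing on the ball, using that $\sinhc$ vanishes only at points of $i\pi\mathbb{Z}\setminus\{0\}$ while the ball lies strictly in the open left half-plane; the paper handles the same point by normalizing with $\cschc^d$ after noting $\sinhc(\rho z)\neq 0$ there. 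What each approach buys: yours is shorter and avoids the block-inverse machinery entirely, whereas the paper's factorization exhibits the error term explicitly as $O\bigl(\lvert e^{(z+\lambda)/\epsilon}\rvert\bigr)$ and mirrors the structure of the fixed-delay proof in the reference it generalizes. Two minor points: your limit matrix is block \emph{upper} triangular (zero block in the lower-left), not lower triangular, though the determinant formula you use is the correct one; and your separation estimate $\lvert\Re(\lambda_*)\rvert\ge 3r\ge 3\delta$ together with $\lvert\Re(\lambda)-\Re(\lambda_*)\rvert\le\delta$ is doing double duty --- it keeps the exponential uniformly small \emph{and} guarantees that $\lambda_*$ is the only zero of the limit function in the closed ball (any other zero in the ball would have real part of the same sign as $\Re(\lambda_*)$, hence would lie in $\mathcal{A}_s$ and violate the $r_0$-separation) --- so it is worth flagging both uses explicitly.
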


\begin{proof}
	
	(i)   Let $\lambda \in \mathcal{A}_{+}$ and $ \delta \in (0, r]$. First, we note that  $\ball_\delta(\lambda)$ is a connected open set and for the eigenvalue $z\in \ball_\delta(\lambda)$, the holomorphic function $\Delta(z;\epsilon)$ defined in \eqref{charact_eq_epsilon} converges uniformly to the polynomial $\widebar{\Delta}(z)=\det\left( z I_n-A\right )$ as $\epsilon\to0$ because  $\rm{Re}(z)>0$. 
	Let 
	$z_0$  be a root of $\widebar{\Delta}(z)$ of  multiplicity $m$. Then,    Hurwitz theorem, see e.g., \cite{gamelin2003complex}, implies that there exists a small enough $\epsilon_0\le 1/\rho$ such that if $0<\epsilon<\epsilon_0<\delta$ the function $\Delta(\lambda;\epsilon)$  has exactly $m$ zeros, counting multiplicity, in $\ball_{\epsilon_0}(z_0)\subset \ball_{\delta}(z_0)$.

	(ii) Let $\lambda \in \mathcal{A}_{-}$  and $ \delta \in (0, r]$. Note that $\sinhc(\rho z)\ne 0$ for $z\in \ball_{\delta}(\lambda) $. 
	For 
	$z\in \ball_{\delta}(\lambda)$, define using  \eqref{charact_eq_epsilon}, \eqref{matrix_B}, and \eqref{matrix_A} the function
	\begin{align*}
		\widehat{\Delta}_{\lambda}(z;\epsilon):=  
		\Delta(z+\lambda;\epsilon)&=\det\left( \begin{bmatrix}
			(z+\lambda) I_{n-d} - A_1 &  -A_2 \\
			-A_3 & (z+\lambda) I_d - A_4 - \widebar{B}\,e^{-(z+\lambda)/{\epsilon}}\,\sinhc((z+\lambda)\rho)
		\end{bmatrix} \right)\\
		&:=
		\det \left( \left[\begin{array}{c|c}
			P_1 & P_2 \\\hline
			P_3 & P_4
		\end{array}\right]\right).   
	\end{align*}
	It then follows by \cite[Proposition 2.8.4]{bernstein2009matrix} that 
	\[
	\widehat{\Delta}_{\lambda}(z;\epsilon)=\det(P_4) \det(P_1-P_2P_4^{-1}P_3)=\left(e^{-(z+\lambda)/{\epsilon}}\right)^d\,\sinhc^d((z+\lambda)\rho)\, \det(Q_1) \det(Q_2)
	\]
	where
	\begin{align*}
		Q_1&:=e^{(z+\lambda)/{\epsilon}}\,\cschc((z+\lambda)\rho)\, \left((z+\lambda) I_d-A_4\right)-\widebar{B}=e^{(z+\lambda)/{\epsilon}}\,\cschc((z+\lambda)\rho)P_4,, \\
		Q_2&:=(z+\lambda) I_{n-d}-A_1-e^{(z+\lambda)/{\epsilon}} \,\cschc((z+\lambda)\rho)\, A_2 Q_1^{-1} A_3.  
	\end{align*}
	Here, $\cschc(x)=1/\sinhc(x)$. 
	Note that $Q_1$ is invertible for sufficiently small $\epsilon$ ($\equiv$ sufficiently large $\gavg$) 
	because
	\[
	\det(Q_1)=\,\det(-\widebar{B})+O\left(\left|e^{(z+\lambda)/{\epsilon}}\right|\right)
	\]
	It then follows from 
	\[
	Q_1^{-1} =-\widebar{B} ^{-1}+O\left(\left| e^{(z+\lambda)/{\epsilon}}\right|\right)
	\]
	that
	\[
	\det(Q_2)=\det\left((z+\lambda) I_{n-d}-A_1\right)+O\left(\left| e^{(z+\lambda)/{\epsilon}}\right|\right).
	\]
	Finally, we have
	\[
	\widehat{\Delta}_{\lambda}(z;\epsilon)\,\cschc^d((z+\lambda)\rho)\,\left(e^{(z+\lambda)/{\epsilon}}\right)^d=\det(-\widebar{B}) \det\left((z+\lambda) I_{n-d}-A_1\right)+O\left(\left| e^{(z+\lambda)/{\epsilon}}\right|\right)
	\]
	Recall that  $\rm{Re}(\lambda)<0$ and $\det(\widebar{B})\ne0$,
	Then,  using the same arguments as in part (i),   we have that for any $0<\delta \leq r$, there exists $0<\epsilon_0\le 1/\rho$ so that whenever $0<\epsilon<\epsilon_0$,
	$\widehat{\Delta}_{\lambda}(z;\epsilon)$ has the same number of zeros, counting multiplicity,  as $\det\left((z+\lambda) I_{n-d}-A_1\right)$ in $\ball_{\delta}(\lambda)$.
\end{proof}


In the following result, we provide some properties of the asymptotic continuous spectrum  $\mathcal{A}_c$ and the singularities of the pseudo-continuous spectrum curve. 
\begin{theorem}\label{theorem_rescaled_pseudo_continuous}
	\begin{enumerate}[series=MyList,label= ({\roman*})]
		\item  There exist $d=\operatorname{rank}(B)$ continuous functions $\gamma_1, \ldots, \gamma_d: \mathbb{R} \rightarrow$ $\mathbb{R} \medcup\{-\infty, \infty\}$ such that \[\mathcal{A}_c=\bigcup_{l=1}^d\left\{\gamma_{\ell}(\omega)+i \omega : \omega \in \mathbb{R}, \gamma_{\ell}(\omega) \notin\{-\infty, \infty\}\right\},\] which are called the spectral curves of $\mathcal{A}_c$.
		
		\item  Let $ \omega \in \mathcal{W}$, defined in \eqref{set_1}. Then,  there exist $l \in\{1, \ldots, d\}$ such that $\gamma_{\ell}(\omega)=\infty$ if and only if $i\omega \in \sigma(A)$.
		
		\item  Let $ i\omega \notin \sigma(A)$, $\rho\omega\neq k\pi\text{ for } k\in \mathbb{Z}\setminus\{0\}$, and $d<N$. Then,  there exist $l \in\{1, \ldots, d\}$ such that $\gamma_{\ell}(\omega)=$ $-\infty$ if and only if $i \omega \in \sigma\left(A_1\right)$.
	\end{enumerate}
\end{theorem}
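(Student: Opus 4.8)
The plan is to identify each spectral curve with $-\ln$ of the modulus of a root of the polynomial $p_\omega$, and to read all three statements off the behaviour of those roots as $\omega$ varies. First I would observe that the defining relation in \eqref{A_c}, namely $p_\omega(e^{-(\gamma+i\varphi)})=0$ for some $\varphi\in\mathbb{R}$, is equivalent to $e^{-\gamma}$ being the modulus of a root of $p_\omega$: as $\varphi$ ranges over $\mathbb{R}$ the point $e^{-(\gamma+i\varphi)}$ traces the full circle of radius $e^{-\gamma}$, so a root of $p_\omega$ on that circle exists precisely when $e^{-\gamma}=|x_j|$ for some root $x_j$. Hence, for $\omega\in\mathcal{W}$, where Theorem~\ref{Thm_App_1} guarantees $\deg p_\omega=d$ and thus $d$ roots $x_1(\omega),\dots,x_d(\omega)$ (counted with multiplicity), every spectral curve has the form $\gamma_\ell(\omega)=-\ln|x_\ell(\omega)|$.

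For part~(i), I would obtain the $d$ continuous functions by labeling the roots according to increasing modulus, $|x_1(\omega)|\le\cdots\le|x_d(\omega)|$. The coefficients of $p_\omega$ are entire in $\omega$, so by the continuous dependence of polynomial roots on their coefficients the unordered multiset of roots varies continuously; the sorted moduli are then continuous real-valued functions on the open set $\mathcal{W}$, and composing with $-\ln(\cdot)$ yields continuous $\gamma_\ell$. The extension to $\mathbb{R}\medcup\{-\infty,\infty\}$ at the excluded frequencies is forced by parts~(ii)--(iii), which identify those limits as exactly $\pm\infty$.

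Parts~(ii) and~(iii) are then short. A curve attains $\gamma_\ell(\omega)=\infty$ exactly when one root satisfies $x_\ell(\omega)=0$, i.e.\ when $0$ solves $p_\omega(x)=0$; since $p_\omega(0)=\det(i\omega I_n-A)$, this occurs if and only if $i\omega\in\sigma(A)$, giving~(ii). A curve attains $\gamma_\ell(\omega)=-\infty$ exactly when a root diverges, which happens iff the degree of $p_\omega$ drops below $d$, i.e.\ iff the leading coefficient $c_d$ vanishes. By the formula $c_d=\sinc^d(\rho\omega)\,\det(i\omega I_{n-d}-A_1)\prod_{j=1}^d\widebar{\lambda}_j$ established in the proof of Theorem~\ref{Thm_App_1}, together with $\sinc(\rho\omega)\ne0$ (from $\rho\omega\ne k\pi$) and $\widebar{\lambda}_j\ne0$ (from invertibility of $\widebar{B}$), we get $c_d=0$ iff $\det(i\omega I_{n-d}-A_1)=0$, i.e.\ $i\omega\in\sigma(A_1)$; the hypothesis $d<n$ ensures the $A_1$-block is nonempty. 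This proves~(iii).

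The hardest part will be making the continuity in~(i) rigorous precisely at the frequencies where branches collide or a root escapes to infinity. A global continuous labeling of individual complex roots can fail because of monodromy around branch points, so rather than tracking individual roots I would work with the sorted moduli (equivalently, with the elementary symmetric functions of the $|x_\ell|^2$), which remain continuous even when the roots permute. At a degree-drop frequency a Rouch\'e or argument-principle count confirms that exactly $d-\deg p_\omega$ of the moduli tend to $+\infty$ while the rest stay bounded, so that the corresponding $\gamma_\ell$ tend to $-\infty$ continuously in the extended-real sense.
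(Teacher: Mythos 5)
Your proposal is correct. For parts (i) and (ii) it is essentially the paper's own argument: the paper likewise realizes the spectral curves as $\gamma_{\ell}(\omega)=-\log|x_{\ell}(\omega)|$ for roots $x_{\ell}$ of $p_\omega$, and proves (ii) through the same chain $i\omega\in\sigma(A)\Leftrightarrow p_\omega(0)=0\Leftrightarrow\gamma_{\ell}(\omega)=\infty$. Where you genuinely diverge is part (iii). The paper passes to the reciprocal polynomial $q_\omega(x)=\det\left(x(i\omega I_n-A)-\sinc(\rho\omega)B\right)$, whose roots near $x=0$ correspond to roots of $p_\omega$ near infinity, and factorizes it for small $|x|$ as $q_\omega(x)=x^{n-d}\det(C_\omega(x))\,\widebar{q}_\omega(x)$ with $C_\omega(0)=-\sinc(\rho\omega)\widebar{B}$ invertible and $\widebar{q}_\omega(0)=\det(i\omega I_{n-d}-A_1)$, concluding $\gamma_{\ell}(\omega)=-\infty\Leftrightarrow\widebar{q}_\omega(0)=0\Leftrightarrow i\omega\in\sigma(A_1)$. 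You instead argue that a root escapes to infinity exactly when $\deg p_\omega$ drops, i.e.\ when the leading coefficient $c_d(\omega)$ vanishes, and read the conclusion off the formula for $c_d$ already established in the proof of Theorem \ref{Thm_App_1}. The two mechanisms are equivalent---$c_d$ is precisely the coefficient of $x^{n-d}$ in $q_\omega$, so $c_d=0$ is the same as the root $x=0$ of $q_\omega$ having multiplicity exceeding $n-d$---but your version is shorter and avoids the Schur-complement manipulation; what the paper's factorization buys is explicit control of $q_\omega$ near $x=0$ (nontriviality of $\widebar{q}_\omega$), which the paper reuses later in Case 3 of the proof of Lemma \ref{lemma_A_2}, so it is not wasted effort there. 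Two further remarks: your sorted-moduli device in (i) is a cleaner justification of continuity than the paper's bare assertion that continuous root functions $x_{\ell}$ exist (though since $\omega$ ranges over a one-dimensional parameter set, the monodromy obstruction you guard against cannot actually arise, so both are sound); and your Rouch\'e count at degree-drop frequencies supplies the link between ``degree drop'' and ``$\gamma_{\ell}\to-\infty$ in the extended-real sense,'' a limiting step the paper leaves implicit when it extends the curves continuously onto $\mathbb{R}$.
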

\begin{proof}
	(i) It follows by Theorem \ref{Thm_App_1} that there exist $d$ continuous functions $x_{\ell}: \mathbb{R} \backslash \mathcal{W} \rightarrow \mathbb{C}$
	such that $p_\omega\left(x_{\ell}(\omega)\right)=0$ for $1 \leq l \leq d$.
	Define $\gamma_{\ell}(\omega):=-\log \left|x_{\ell}(\omega)\right|$. Then, there exist $d$ spectral curves $\gamma_{\ell}(\omega) $ that  satisfy
	\[p_\omega\left(e^{-(\gamma_{\ell}(\omega)+i\varphi)}\right)=0,\qquad \text{for some} \,  \varphi\in\mathbb{R},\]
	which can be extended continuously onto $\mathbb{R}$ with values in $\mathbb{R} \medcup\{-\infty,\infty\}$.\\
	
	(ii) Note that for all $\omega\in\mathcal{W}$  we have 
	\[i\omega\in \sigma(A)\Leftrightarrow
	p_{\omega}(0)=0\Leftrightarrow
	e^{-(\gamma_{\ell}(\omega)+i\varphi)}=0
	\Leftrightarrow
	\gamma_{\ell}(\omega)=\infty
	\text{ for some } \ell\in\{1,\ldots,d\}.
	\]
	
	(iii) For $x\neq0$, define 
	\[
	q_\omega(x)= \det\left(x(i \omega I_n-A)-\,\sinc( \rho\omega)\,B\right).
	\]
	Then, $q_\omega(x)=0$ is equivalent to $p_\omega(1/x)=0$ for $x\neq0$.  For $\ell\in\{1,\ldots,d\}$, the spectral curves $\gamma_{\ell}(\omega)$ satisfy $p_\omega\left(e^{-\gamma_{\ell}(\omega)} e^{-i \varphi}\right)=0$ and $q_\omega\left(e^{\gamma_{\ell}(\omega)} e^{i \varphi}\right)=0$ for $\gamma_{\ell}(\omega) \notin\{-\infty, \infty\}$ with some $\varphi \in \mathbb{R}$. We now study roots of $q_\omega(x)$ that approach zero.

	Using  $\sinc(\rho\omega)\ne 0$, the invertibility of $\widebar{B}$, and the same arguments as in \cite[Lemma 8]{lichtner2011spectrum}, we have that for all $\omega \in \mathbb{R}\backslash \{k\pi/\rho : k\in \mathbb{Z}\setminus\{0\}\}$, there exists $0<\delta\ll 1$ such that for sufficiently small $|x|<\delta$, we can write $q_\omega$ as
	\[
	q_\omega(x)=x^{N-d}\, \det \left(C_\omega(x) \right)\,\widebar{q}_\omega(x) ,
	\]
	where
	\[
	\widebar{q}_\omega(x)=\det\left(i \omega I_{N-d}-A_1 -x A_2 C(x)^{-1} A_3\right)
	\qquad\text{and} \qquad
	C_\omega(x)=x\left(i \omega I_d-A_4\right)-\sinc( \rho\omega)\,\widebar{B}.
	\]
	By assumption, $ i\omega \notin \sigma(A)$ and $\rho\omega\neq k\pi\text{ for } k\in \mathbb{Z}\setminus\{0\}$, which implies $\omega\notin{\mathcal A}_0$ and $p_\omega\not\equiv 0$. Thus we have ${q}_\omega(x)\not\equiv 0$, and hence, $\widebar{q}_\omega(x)\not\equiv 0$. From this expression, the root $x=0$ of $q_\omega$ is of multiplicity $N-d$. Since rank$(B)=d$, we know that $\widebar{q}_\omega(x)$ is a nontrivial component  of $q_\omega(x)$  for $|x|<\delta$. Furthermore, we have
	\[\gamma_{\ell}(\omega)=-\infty\Leftrightarrow
	\widebar{q}_\omega(0)=0\Leftrightarrow
	i\omega\in \sigma(A_1),\qquad \text{for some } \ell\in\{1,\ldots,d\}.
	\]
	
\end{proof}

Before presenting the main result on the convergence of the pseudo-continuous spectrum, we establish the following three lemmas.
\begin{lemma}\label{lemma_A_1}
	Let $i \omega_0 \in \mathcal{A}_0$. Then 
	\begin{enumerate}[series=MyList,label= ({\roman*})]
		\item  There exist $l \in \mathbb{N}$ and a polynomial $\widehat{p}_y(x)$ in $x$ that is nontrivial for all $y$ belonging to an open neighborhood $U \subset \mathbb{C}$ of $i \omega_0$ which contains no zero of $\sinhc(\rho y)$, such that
		\[
		\det(y\, I_n-A- x\,\sinhc(\rho y)\,B)=\left(y-i \omega_0\right)^l\,\widehat{p}_y(x).
		\]
		In particular 
		\[
		p_\omega(x)=i^l\,\left(\omega-\omega_0\right)^l\, \widehat{p}_{i \omega}(x)
		\]
		where $\widehat{p}_{i \omega}(x)$ is a nontrivial polynomial in $x$ for $\omega$ in some open neighborhood $\Omega$ of $\omega_0$ which contains no zero of $\sinhc(\rho y)$.

		\item  There exist $m \in \mathbb{N}$ and a polynomial $\widehat{q}_z(x)$ in $x$ that is nontrivial for $z$ belonging to an open neighborhood $V \subset \mathbb{C}$ of $i \omega_0$ which contains no zero of $\sinhc(\rho y)$, such that
		\[
		\det(x\, (z I_n-A)-\sinhc(\rho y)\,B)=\left(z-i \omega_0\right)^m \widehat{q}_z(x),
		\]
		In particular 
		\[
		q_\omega(x)=i^m \,\left(\omega-\omega_0\right)^m \, \widehat{q}_{i \omega}(x)
		\]
		where $\widehat{q}_{i \omega}(x)$ is a nontrivial polynomial in $x$ for $\omega$ in some open neighborhood $\widebar{\Omega}$ of $\omega_0$  which contains no zero of $\sinhc(\rho y)$.
	\end{enumerate}
\end{lemma}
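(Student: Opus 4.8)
The plan is to view $p_\omega$ and $q_\omega$ as restrictions to the imaginary axis of two functions that are polynomial in $x$ and entire in a complex spectral variable, and then to extract the common order of vanishing of their $x$-coefficients at $i\omega_0$. Concretely, set
\[
F(y,x):=\det\!\left(y I_n-A- x\,\sinhc(\rho y)\,B\right),\quad\text{and}\quad G(z,x):=\det\!\left(x\,(z I_n-A)-\sinhc(\rho z)\,B\right).
\]
Since $\sinhc(i\omega\rho)=\sinc(\omega\rho)$, we have $F(i\omega,x)=p_\omega(x)$ and $G(i\omega,x)=q_\omega(x)$. Expanding the determinants, write $F(y,x)=\sum_{j=0}^{d}c_j(y)\,x^j$ and $G(z,x)=\sum_{j=0}^{n}\widetilde{c}_j(z)\,x^j$, where every coefficient is an entire function of the spectral variable (a polynomial in $y$, resp.\ $z$, combined with powers of the entire function $\sinhc(\rho\,\cdot)$). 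The hypothesis $i\omega_0\in\mathcal{A}_0$ means precisely that $p_{\omega_0}\equiv 0$, i.e.\ $F(i\omega_0,\cdot)\equiv 0$, so that $c_j(i\omega_0)=0$ for every $j$.

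For part (i) I would then factor out the minimal order of vanishing. The coefficient $c_0(y)=\det(yI_n-A)$ is a nonzero polynomial, so its order of vanishing $\nu_0$ at $i\omega_0$ is finite (and positive, as $i\omega_0\in\sigma(A)$). Defining
\[
l:=\min\left\{\operatorname{ord}_{i\omega_0}(c_j)\ :\ c_j\not\equiv 0\right\}\le \nu_0<\infty,
\]
each $c_j$ vanishes to order at least $l$ at $i\omega_0$ (with the convention that an identically zero coefficient vanishes to infinite order), so every quotient $c_j(y)/(y-i\omega_0)^{l}$ is holomorphic near $i\omega_0$. Setting $\widehat{p}_y(x):=\sum_j \big(c_j(y)/(y-i\omega_0)^{l}\big)x^j$ gives $F(y,x)=(y-i\omega_0)^{l}\,\widehat{p}_y(x)$, and the coefficient achieving the minimum is nonzero at $i\omega_0$, so $\widehat{p}_{i\omega_0}$ is nontrivial. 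By continuity of the holomorphic coefficients, $\widehat{p}_y$ stays nontrivial on a neighborhood of $i\omega_0$, which I intersect with the complement of the isolated zero set of $\sinhc(\rho\,\cdot)$ to obtain $U$. The ``in particular'' statement follows by putting $y=i\omega$, using $(i\omega-i\omega_0)^{l}=i^{l}(\omega-\omega_0)^{l}$, and taking $\Omega=\{\omega\in\mathbb{R}:i\omega\in U\}$.

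Part (ii) is identical in structure, using instead the leading coefficient $\widetilde{c}_n(z)=\det(zI_n-A)\not\equiv 0$ to guarantee the minimal order $m$ is finite. The only new point is that $i\omega_0\in\mathcal{A}_0$ must force $q_{\omega_0}\equiv 0$: this follows from the identity $q_\omega(x)=x^{n}p_\omega(1/x)$ for $x\neq 0$, so $p_{\omega_0}\equiv 0$ yields $q_{\omega_0}(x)=0$ at infinitely many points and hence $q_{\omega_0}\equiv 0$. Thus $\widetilde{c}_j(i\omega_0)=0$ for all $j$, and the same extraction and continuity argument produces $m$, $\widehat{q}_z$, $V$, and the neighborhood $\overline{\Omega}$ of $\omega_0$.

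The only step requiring care --- and the one I would flag as the crux --- is verifying that the common factor $(y-i\omega_0)^{l}$ can be pulled out while leaving a genuinely nontrivial remainder. This rests entirely on identifying one $x$-coefficient that is not identically zero in the spectral variable, namely $c_0=\det(yI_n-A)$ in part (i) and the leading coefficient $\widetilde{c}_n=\det(zI_n-A)$ in part (ii); these bound $l$ (resp.\ $m$) from above and thereby exclude the pathological case in which every coefficient vanishes to infinite order. Once this is in place, the holomorphy of the coefficients, inherited from $\sinhc(\rho\,\cdot)$ being entire, makes the factorization and the neighborhood argument routine.
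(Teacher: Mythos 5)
Your proof is correct, but it takes a genuinely different route from the paper's. The paper proves the lemma by a case split on how membership in $\mathcal{A}_0$ arises: when $\sinc(\rho\omega_0)\neq 0$ (the decoupled case, $i\omega_0\in\sigma(A_1)$) it factors the determinant as $\det(y I_{n-d}-A_1)\,\det(y I_d-A_4-x\,\sinhc(\rho y)\widebar{B})$, pulls $(y-i\omega_0)^l$ out of the first factor, and gets nontriviality of the second factor from its leading $x^d$-coefficient $(-\sinhc(\rho y))^d\det(\widebar{B})$; when $\sinc(\rho\omega_0)=0$ it instead invokes the Weierstrass preparation theorem. You avoid the case split entirely: you expand the determinant in powers of $x$, note that $i\omega_0\in\mathcal{A}_0$ is equivalent (by the paper's alternate characterization of $\mathcal{A}_0$) to $p_{\omega_0}\equiv 0$, so every entire coefficient $c_j$ vanishes at $i\omega_0$, and you factor out the minimal vanishing order, which is finite because $c_0(y)=\det(yI_n-A)$ (respectively the leading coefficient $\det(zI_n-A)$ in part (ii)) is a nonzero polynomial; your reduction $q_\omega(x)=x^n p_\omega(1/x)$ likewise replaces the paper's separate computation for $q_\omega$. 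Your argument is more elementary and, in one respect, more solid: the paper's appeal to Weierstrass preparation asserts the stronger parametric conclusion $G(x,i\omega_0)\neq 0$ for \emph{every} $x$, which is not what that theorem directly gives and is more than the lemma needs, whereas your minimal-order coefficient argument delivers exactly the required nontriviality. What the paper's case analysis buys in exchange is an explicit identification of $l$ (the multiplicity of $i\omega_0$ as a root of $\det(yI_{n-d}-A_1)$ in the decoupled case) and an explicit formula for $\widehat{p}_y$. One shared caveat: when $\omega_0=k\pi/\rho$ the point $i\omega_0$ is itself a zero of $\sinhc(\rho\,\cdot)$, so no neighborhood of $i\omega_0$ can literally avoid all zeros of $\sinhc(\rho y)$; your final step of intersecting with the complement of that zero set then no longer yields a neighborhood, exactly as the lemma's own wording becomes vacuous there (and as the paper's Case 2 also ignores). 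Since your nontriviality conclusion never uses this avoidance, the fix is simply to drop the intersection and take $U$ to be the neighborhood on which the minimal-order coefficient remains nonzero.
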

\begin{proof}
	Define
	\[F(x,y):=\det(y\,I_n-A- x\,\sinhc(\rho y)\,B).\]
	\noindent  (i) If $i\omega_0\in\mathcal{A}_0$ and $\sinc(\rho\omega_0)\ne 0$, then either $A_2=0$ or $A_3=0$ and $i\omega_0\in\sigma(A_1)$. Consequently, we have
	\[
	F(x,y)=\det(y\,I_{n-d}-A_1)\det(y\,I_d - A_4 - x\,\sinhc(\rho y)\,\widebar{B})
	\]
	Hence,  $i\omega_0$ is a root of the first factor and the maximal power of $y-i\omega_0$ can be factored out:
	\[
	\det(y\,I_{n-d}-A_1)= (y-i\omega_0)^l p_1(y)
	\]
	where $p(y)$ is a polynomial in $y$ which is nontrivial near $i\omega_0$. The second factor is a polynomial in $y$ and $x\,\sinhc(\rho y)$: 
	\[
	\det(y\,I_d - A_4 - x\,\sinhc(\rho y)\,\widebar{B})=p_2(x\,\sinhc(\rho y),y)
	\]
	The leading term is $x^d$ which has coefficient $(-\sinhc(\rho y))^d\det{(\widebar{B})}$. Since $\widebar{B}$ is invertible, this is nonzero for $y$ away from zeros of $\sinhc(\rho y)$.
	Thus, we have
	\[
	F(x,y)=(y-i\omega)^l\,\widehat{p}_y(x)
	\]
	where $\widehat{p}_y(x)=p_1(y)p_2(x\,\sinhc(\rho y),y)$ is a polynomial in $x$ where the coefficients are products of powers of $y$ and $\sinhc(\rho y).$
	
	If $i\omega_0\in\mathcal{A}_0$ and $\sinc(\rho\omega_0)= 0$, then $i\omega_0\in\sigma(A)$. Hence, $i\omega_0$  is a root of the equation $F(x,y)=0$. Since
	\[F(x,i\omega_0)=\det(i\omega_0\,I_n-A- x\,\sinhc(\rho i\omega_0)\,B)=\det(i\omega_0\,I_n-A)=0\]
	and $F$ is holomorphic in $y$ for every $x$, then Weierstrass preparation theorem \cite{fuks1963theory}  implies that there exist an integer $l> 0$ and a holomorphic function $G(x, y)$ with $G(x, i \omega) \neq 0$ for every $x$ such that $F(x, y)=(y-i \omega)^l G(x, y)
	$. Consequently, we define $\hat{p}_y(x)=G(x, y)$.

	\noindent   (ii) Let $i\omega_0\in\mathcal{A}_0$. Following the same arguments as in (i) when    $\sinc(\rho\omega_0)\ne 0$, we have
	\begin{align*}
		\det(x\, (z I_n-A)-\sinhc(\rho z)\,B)&=\det(x(z\,I_{n-d}-A_1))\,\det(x(z\,I_d - A_4)- \,\sinhc(\rho z)\,\widebar{B})\\
		&= x^d\,\det(z\,I_{n-d}-A_1)\,\det(x(z\,I_d - A_4)- \,\sinhc(\rho z)\,\widebar{B})\\
		&= (z-i\omega_0)^m\, x^d\, q_1(z)\, q_2(\sinhc(\rho z),x\,z):=(z-i\omega_0)^m\,\widehat{q}_z(x).
	\end{align*}
	When    $\sinc(\rho\omega_0)= 0$, then $\det(x\, (i\omega_0 I_n-A))=0$ for every $x$; hence,   there exists a holomorphic function $H(x, y)$ with $H(x, i \omega) \neq 0$ for every $x$ such that 
	\begin{align*}
		\det(x\, (z I_n-A)-\sinhc(\rho z)\,B) 
		&= (z-i\omega_0)^m\, H(x,y):=(z-i\omega_0)^m\,\widehat{q}_z(x).
	\end{align*}
\end{proof}

Recall the asymptotic continuous spectrum, $\mathcal{A}_c$, defined in equation~\eqref{eqq_A0_new} and the scaling $S_\epsilon$ defined in equation~\eqref{Sepsilon}. 
\begin{lemma}\label{lemma_A_2}
	Let $\left\{\lambda_m\right\}_{m \in \mathbb{N}}$ be a sequence of complex numbers converging to $i \omega_0 \in \mathbb{C}$, where $\omega_0 \in \mathbb{R}$, and let $\left\{\epsilon_m\right\}_{m \in \mathbb{N}}$, with $0<\epsilon_m<1/\rho$
	and $\lim\limits_{m \rightarrow \infty} \epsilon_m=0$, be such that $\Delta\left(\lambda_m; \epsilon_m\right)=0$. Then, there exists a subsequence $\left\{\lambda_{m_k}\right\}_{k \in \mathbb{N}}\subset \left\{\lambda_m\right\}_{m \in \mathbb{N}}$ such that one of the following holds:\\
	\begin{enumerate}[series=MyList,label= ({\roman*})]
		\item $\lim\limits_{k \rightarrow \infty} S_{\epsilon_{m_k}}\left(\lambda_{m_k}\right) \in \mathcal{A}_c$;
		\item  $\lim\limits_{k \rightarrow \infty} S_{\epsilon_{m_k}}\left(\lambda_{m_k}\right)=\infty$ and there exists a spectral curve $\gamma_l$ with $\gamma_l\left(\omega_0\right)=\infty$, $l \in\{1, \ldots, d\}$;
		\item $\lim\limits_{k \rightarrow \infty} S_{\epsilon_{m_k}}\left(\lambda_{m_k}\right)=-\infty$ and there exists a spectral curve $\gamma_l$ with $\gamma_l\left(\omega_0\right)=-\infty$, $l \in\{1, \ldots, d\}$;
		\item  $\lambda_{m_k}=\lambda_{m_{k+1}}=i \omega_0 \in \mathcal{A}_0$ for all $k \in \mathbb{N}$.
	\end{enumerate}   
\end{lemma}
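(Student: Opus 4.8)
The plan is to reparametrize the problem so that the rescaled real part becomes the unknown, pass to a subsequence on which everything converges, and identify the limit by letting the characteristic equation degenerate into a root condition for the limiting polynomial $p_{\omega_0}$. Write $\lambda_m = \epsilon_m\gamma_m + ib_m$ with $\gamma_m := \Re(\lambda_m)/\epsilon_m$ and $\varphi_m := b_m/\epsilon_m$, so that $S_{\epsilon_m}(\lambda_m) = \gamma_m + ib_m$ and $e^{-\lambda_m/\epsilon_m} = e^{-\gamma_m}e^{-i\varphi_m}$. Because $b_m = \Im(\lambda_m)\to\omega_0$, the limiting behaviour of $S_{\epsilon_m}(\lambda_m)$ is governed entirely by $\gamma_m$. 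Since $[-\infty,\infty]$ is compact and $e^{-i\varphi_m}$ lies on the unit circle, I would pass to a subsequence (still written $m_k$) along which $\gamma_{m_k}\to\gamma^*\in[-\infty,\infty]$ and $e^{-i\varphi_{m_k}}\to e^{-i\varphi^*}$ for some $\varphi^*\in\mathbb{R}$; the proof then splits on $\gamma^*$ and on whether $i\omega_0\in\mathcal{A}_0$.

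Assume first $i\omega_0\notin\mathcal{A}_0$, so $p_{\omega_0}\not\equiv0$. With $x_{m_k} := e^{-\gamma_{m_k}}e^{-i\varphi_{m_k}}$ and using $\sinhc(\rho\lambda_{m_k})\to\sinc(\rho\omega_0)$, continuity of the determinant in \eqref{charact_eq_epsilon} gives
\[
0=\lim_{k\to\infty}\Delta(\lambda_{m_k};\epsilon_{m_k})=\det\!\left(i\omega_0 I_n-A-x^*\sinc(\rho\omega_0)B\right)=p_{\omega_0}(x^*),\qquad x^*:=e^{-(\gamma^*+i\varphi^*)}.
\]
If $\gamma^*$ is finite then $0<|x^*|<\infty$, so $x^*$ is a root of $p_{\omega_0}$ of the form $e^{-(\gamma^*+i\varphi^*)}$ and hence $\gamma^*=\gamma_\ell(\omega_0)$ for some spectral curve; by \eqref{A_c} this yields $\lim_k S_{\epsilon_{m_k}}(\lambda_{m_k})=\gamma^*+i\omega_0\in\mathcal{A}_c$, which is alternative (i). If $\gamma^*=+\infty$ then $x^*=0$ and $p_{\omega_0}(0)=\det(i\omega_0 I_n-A)=0$, i.e.\ $i\omega_0\in\sigma(A)$, whence Theorem~\ref{theorem_rescaled_pseudo_continuous}(ii) supplies a spectral curve with $\gamma_\ell(\omega_0)=\infty$: alternative (ii).

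The case $\gamma^*=-\infty$ cannot be read off from $p_{\omega_0}$ directly, since $|x^*|=\infty$, so I would pass to the reciprocal polynomial. For $x\neq0$ the characteristic equation is equivalent to $Q_{\lambda_{m_k}}(w_{m_k})=0$, where $Q_\lambda(w):=\det\!\left(w(\lambda I_n-A)-\sinhc(\rho\lambda)B\right)$ reduces to $q_\omega$ at $\lambda=i\omega$ and $w_{m_k}:=1/x_{m_k}=e^{\gamma_{m_k}}e^{i\varphi_{m_k}}\to0$. Invoking the local factorization established in the proof of Theorem~\ref{theorem_rescaled_pseudo_continuous}(iii), in which the trivial root $w=0$ of multiplicity $n-d$ is separated off and the accompanying factor $\det(C_{\omega_0}(w))$ is nonzero for small $|w|$, I would divide these out (legitimate because $w_{m_k}\neq0$) and pass to the limit in the remaining nontrivial factor $\widebar{q}_{\omega_0}$. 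This forces $\widebar{q}_{\omega_0}(0)=0$, i.e.\ $i\omega_0\in\sigma(A_1)$, and Theorem~\ref{theorem_rescaled_pseudo_continuous}(iii) then provides a spectral curve with $\gamma_\ell(\omega_0)=-\infty$: alternative (iii).

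The main obstacle is the degenerate regime $i\omega_0\in\mathcal{A}_0$, where $p_{\omega_0}\equiv0$ and the limits above collapse to $0=0$. I would first test whether $\lambda_{m_k}=i\omega_0$ for infinitely many $k$; passing to that sub-subsequence gives alternative (iv) at once, since then $\lambda_{m_k}=i\omega_0\in\mathcal{A}_0$ for all $k$. Otherwise $\lambda_{m_k}\neq i\omega_0$ for all large $k$, and I would apply Lemma~\ref{lemma_A_1} to write $\Delta(\lambda_{m_k};\epsilon_{m_k})=(\lambda_{m_k}-i\omega_0)^{l}\,\widehat{p}_{\lambda_{m_k}}(x_{m_k})$ with $\widehat{p}$ nontrivial near $i\omega_0$; dividing by the nonzero factor $(\lambda_{m_k}-i\omega_0)^{l}$ reduces the equation to $\widehat{p}_{\lambda_{m_k}}(x_{m_k})=0$, and repeating the finite\,/\,$+\infty$\,/\,$-\infty$ trichotomy with $\widehat{p}$ and its reciprocal $\widehat{q}$ from Lemma~\ref{lemma_A_1}(ii) in place of $p_{\omega_0},q_{\omega_0}$ lands the limit in alternatives (i)--(iii). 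Here the limit must be placed in $\mathcal{A}_c$ through the closure definition \eqref{eqq_A0_new}, approaching $\gamma^*+i\omega_0$ along nearby frequencies $\omega$ with $i\omega\notin\mathcal{A}_0$ and using the continuity of the spectral curves from Theorem~\ref{theorem_rescaled_pseudo_continuous}(i); carrying out this closure argument, together with the multiplicity bookkeeping in the reciprocal step and the handling of the exceptional frequencies $\omega_0=k\pi/\rho$ (where $\sinc(\rho\omega_0)=0$), is where the real work lies.
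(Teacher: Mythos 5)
Your proposal is correct and follows essentially the same route as the paper's own proof: the same decomposition of $\lambda_m$ into a rescaled real part $\gamma_m$ and a phase on the unit circle, passage to a subsequence with $\gamma_{m_k}\to\gamma^*\in[-\infty,\infty]$, the trichotomy on $\gamma^*$ resolved through $p_{\omega_0}$, the reciprocal polynomial for the $\gamma^*=-\infty$ case, and the factorization of Lemma \ref{lemma_A_1} for the degenerate case $i\omega_0\in\mathcal{A}_0$, with alternative (iv) extracted when $\lambda_{m_k}=i\omega_0$ infinitely often. The only real difference is one of emphasis: in the $\gamma^*=-\infty$ case you explicitly divide out the trivial root of multiplicity $n-d$ and the invertible Schur factor before passing to the limit in $\overline{q}_{\omega_0}$, which is slightly more careful than the paper's terse appeal to the nontriviality of $q_{\omega_0}$ near zero via Theorem \ref{theorem_rescaled_pseudo_continuous}, while the details you defer (the closure argument and the exceptional frequencies $\rho\omega_0=k\pi$) are treated no more explicitly in the paper itself.
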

\begin{proof}
	Write $\lambda_m=\epsilon_m \gamma_m+i \epsilon_m \varphi_m+i 2 \pi\,\epsilon_m  q_m$ where $\varphi_m \in[0,2 \pi)$ and $q_m \in \mathbb{Z}$. By assumption we have $\lim\limits _{n \rightarrow \infty}\left(\epsilon_m \gamma_m\right)=\lim\limits _{n \rightarrow \infty}\left(\epsilon_m \varphi_m\right)=0$ and  $\lim\limits _{m \rightarrow \infty}\left(i 2 \pi\,\epsilon_m  q_m\right)=i \omega_0$.
	Since $\left\{\varphi_m\right\}_{m \in \mathbb{N}}$ is a bounded sequence in $\mathbb{R}$, it has a convergent subsequence. 
	To simplify notation, we may pass to that subsequence and assume that $\lim\limits _{m \rightarrow \infty} \varphi_m=$ $\varphi_0 \in \mathbb{R}$. Define
	\[
	\kappa_m(y):=\det\left((\epsilon_m y+i \epsilon_m 2 \pi q_m) I_n-A-\sinhc(\rho (\epsilon_m y+i \epsilon_m 2 \pi q_m))e^{-y}\,B \right), \quad y \in \mathbb{C}.
	\]
	The sequence $\kappa_m(y)$ of holomorphic functions converges uniformly to $p_{\omega_0}\left(e^{-y}\right)$  on bounded sets of $\mathbb{C}$. Moreover, it follows from $e^{-i(\varphi_m+2\pi q_m)}=e^{-i\varphi_m}$ that 
	\begin{equation}\label{eqq_99}
		\Delta(\lambda_m;\epsilon_m)=\kappa_m\left(\gamma_m+i \varphi_m\right)
		=p_{\epsilon_m2\pi q_m}
		\left(e^{-(\gamma_m+i\varphi_m)}\right)+O\left(\epsilon_m(\gamma_m+i\varphi_m)\right)
		=0.
	\end{equation}
	Suppose $\gamma_m$ is bounded. Then, there exists a subsequence $\left\{\gamma_{m_k}\right\}_{k \in \mathbb{N}}$ converging to some $\gamma_0 \in \mathbb{R}$  and $p_{\omega_0}\left(e^{-\gamma_0-i \varphi_0}\right)=0$ as $k \rightarrow \infty$  in \eqref{eqq_99}. 
	Suppose $i \omega_0 \notin \mathcal{A}_0$. It follows by the definition of $\mathcal{A}_c$ in \eqref{eqq_A0_new} that 
	\[\lim\limits_{k \rightarrow \infty} S_{\epsilon_{m_k}}\left(\lambda_{m_k}\right)=\lim\limits_{k \rightarrow \infty} \left(\gamma_{m_k}+i \epsilon_{m_k} \varphi_{m_k}+i 2 \pi\,\epsilon_{m_k}  q_{m_k}\right)=\gamma_0+i \omega_0 \in \mathcal{A}_c\] 
	Thus (i) holds in this case. 
	Assume $i \omega_0 \in \mathcal{A}_0$. Then, 
	Lemma \ref{lemma_A_1} implies that in an open neighborhood $U \subset \mathbb{C}$ of $i \omega_0$ we have, for each $k\in\mathbb{N}$,
	\[\det (\lambda_{m_k} I_n - A - e^{-\gamma_{m_k}-i\varphi_{m_k}}\sinhc(\rho\,\lambda_{m_k})B)
	=\left(\lambda_{m_k}-i \omega_0\right)^{l}\,\widehat{p}_{\lambda_{m_k}}\left(e^{-\gamma_{m_k}-i\varphi_{m_k}}\right)=0\]
	for 
	a nontrivial polynomial $\widehat{p}_{\lambda_{m_k}}(x)$  in $U $.

	Hence,  
	\begin{equation}\label{eeqq_11}
		\lambda_{m_k}=i \omega_0 \quad \text { or } \quad \widehat{p}_{\lambda_{m_k}}\left(e^{-\gamma_{m_k}} e^{-i \varphi_{m_k}}\right)=0.   
	\end{equation}
	Note that (iv) holds whenever  the set $\left\{k \in \mathbb{N} \,:\, \lambda_{m_k}=i \omega_0\right\}$ is infinite.
	Otherwise,  $\lambda_{m_k} \neq i \omega_0$ for all $k$ sufficiently large, and we obtain from \eqref{eeqq_11} that
	\[
	\widehat{p}_{ i \omega_0}\left(e^{-\gamma_0-i \varphi_0}\right)=\lim\limits _{k \rightarrow \infty} \widehat{p}_{\lambda_{m_k}}\left(e^{-\gamma_{m_k}} e^{-i \varphi_{m_k}}\right)=0
	\]
	Since $\widehat{p}_{\lambda_{m k}}(x)$ is nontrivial for values of $\omega$ near $\omega_0$, it follows that $\widehat{p}_{i \omega_0}(x)$ is likewise nontrivial. Hence, $p_\omega(x)$ has a root $x_l(\omega)$ which converges to $e^{-\gamma_0-i \varphi_0}$ for $\omega \rightarrow \omega_0$. 
	Consequently, the corresponding spectral curve $\gamma_l(\omega)=-\log \left|x_l(\omega)\right|$ satisfies $\gamma_l\left(\omega_0\right)=\gamma_0$. Thus,  $\gamma_0+i \omega_0 \in \mathcal{A}_c$ and (i) holds.

	\noindent Now, suppose  $\gamma_m$ is unbounded. We consider four cases:
	\begin{enumerate}[series=MyList, label=Case~\arabic*., leftmargin=2cm]
		\item $\lim\limits_{k \rightarrow \infty} \gamma_{m_k}= \infty$ and $i \omega_0 \notin \mathcal{A}_0$: 
		Taking $k \rightarrow \infty$ in equation \eqref{eqq_99} implies that $p_{\omega_0}(0)=0$. Since $i \omega_0 \notin \mathcal{A}_0$, there exists $l \in\{1, \ldots, d\}$ such that $\gamma_l\left(\omega_0\right)=\infty$; and hence, (ii) holds.

		\item $\lim\limits_{k \rightarrow \infty} \gamma_{m_k}= \infty$ and $i \omega_0 \in \mathcal{A}_0$: 
		Using \eqref{eeqq_11}, we have that either (iv) holds, or 
		we assume $\lambda_{m_k} \neq i \omega_0$ and $\widehat{p}_{\lambda_{m_k}}\left(e^{-\gamma_{m_k}} e^{-i \varphi_{m_k}}\right)=0$ for all $k\in\mathbb{N}$.
		In the latter case, $\widehat{p}_{i\omega_0}(0)=0$, which implies (ii).

		\item $\lim\limits_{k \rightarrow \infty} \gamma_{m_k}= -\infty$ and $i \omega_0 \notin \mathcal{A}_0$: Define
		\[
		\varrho_m(y):=\operatorname{det}\left(e^y\left((\epsilon_m y+i \epsilon_m 2 \pi q_m) I_n-A\right)-\sinhc(\rho (\epsilon_m y+i \epsilon_m 2 \pi q_m))\,B\right), \quad y \in \mathbb{C} .
		\]
		The sequence $\varrho_m(y)$ of holomorphic functions converges uniformly to $q_{\omega_0}\left(e^y\right)$ on bounded sets of $\mathbb{C}$. Moreover, we have
		\[
		\varrho_m\left(\gamma_m+i \varphi_m\right)=\left(e^{\gamma_m+i \varphi_m}\right)^n \Delta\left(\lambda_m; \epsilon_m\right)=0
		\]
		Letting $k \rightarrow \infty$, we have $q_{\omega_0}(0)=0$. Since $i \omega_0 \notin \mathcal{A}_0$, the function $q_{\omega_0}(x)$ is nontrivial  for sufficiently small $|x|$, see Theorem \ref{theorem_rescaled_pseudo_continuous}. Then, there exists $l \in\{1, \ldots, d\}$ such that $\gamma_l\left(\omega_0\right)=-\infty$; and hence, (iii) holds.

		\item $\lim\limits_{k \rightarrow \infty} \gamma_{m_k}= -\infty$ and $i \omega_0 \in \mathcal{A}_0$: Applying Lemma \ref{lemma_A_1} leads to
		\[
		\lambda_{m_k}=i \omega_0 \quad \text { or } \quad \widehat{q}_{\lambda_{m_k}}\left(e^{\gamma_{m_k}} e^{i \varphi_{m_k}}\right)=0,
		\]
		which implies that either (iv) or (iii) holds using the same arguments as above.
	\end{enumerate}
\end{proof}

\begin{lemma}\label{lemma_A_3}
	For $m \in \mathbb{N}$, let $0<\epsilon_m\le1/\rho$ be such that 
	$\lim\limits_{m \rightarrow \infty} \epsilon_m=0$. 
	Let $R>0$ and consider the sequence $\left\{\lambda_{m}\right\}_{m \in \mathbb{N}} \subset \mathcal{P}_c^{\epsilon_m}$ with $\left|{\rm Im} (\lambda_m)\right| \leq R$. 
	Then, 
	\begin{enumerate}[series=MyList,label= ({\roman*})]
		\item $\left\{\lambda_{m}\right\}_{m \in \mathbb{N}}$ is bounded
		\item  For any convergent subsequence $\left\{\lambda_{m_k}\right\}_{k \in \mathbb{N}}\subset \left\{\lambda_{m}\right\}_{m \in \mathbb{N}}$, we have $\lim\limits_{k \rightarrow \infty} {\rm Re}\left\{\lambda_{m_k}\right\}_{k \in \mathbb{N}}=0$.
	\end{enumerate} 
\end{lemma}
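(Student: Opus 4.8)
The plan is to let the delay enter \eqref{charact_eq_epsilon} through the single scalar $\mu=\mu(\lambda;\epsilon):=e^{-\lambda/\epsilon}\,\sinhc(\lambda\rho)$ and to write $\Delta(\lambda;\epsilon)=P(\lambda,\mu)$, where $P(\lambda,\mu):=\det(\lambda I_n-A-\mu B)=\sum_{j=0}^{d}c_j(\lambda)\,\mu^{j}$. As in the proof of Theorem~\ref{Thm_App_1} (see also \cite[Lemma~7]{lichtner2011spectrum}), the block structure \eqref{matrix_B}--\eqref{matrix_A} gives $c_d(\lambda)=\det(-\widebar{B})\,\det(\lambda I_{n-d}-A_1)$, a polynomial of degree $n-d$ with nonzero leading coefficient, while each $c_j$ satisfies $\deg_\lambda c_j\le n-j$. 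The whole argument then reduces to controlling the size of $\mu$ in the two half-planes $\Re(\lambda)>0$ and $\Re(\lambda)<0$, which behave very differently because $\gavg=1/\epsilon\ge\rho$ forces the delay factor to decay on the right and to grow on the left.

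For the boundedness in (i), since $|\Im(\lambda_m)|\le R$ it suffices to bound $\Re(\lambda_m)$. On $\Re(\lambda)>0$ the factor $\mu$ is just the Laplace transform of the kernel, so $|\mu|\le\int_0^\infty g(s)\,e^{-\Re(\lambda)s}\,ds\le1$ (the support lies in $s\ge\gavg-\rho\ge0$); hence $\lambda_m\in\sigma(A+\mu B)$ with $\|\mu B\|\le\|B\|$, forcing $|\lambda_m|\le\|A\|+\|B\|$. On $\Re(\lambda)<0$ the delay factor instead blows up: using $1/\epsilon\ge\rho$ and the dominant exponential in $\sinhc$ one obtains the uniform lower bound $|\mu|\ge e^{-2\rho\Re(\lambda)}/(4\rho|\lambda|)$, which grows exponentially as $\Re(\lambda)\to-\infty$. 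Dividing $P(\lambda,\mu)=0$ by $\mu^{d}$ and using that $|c_d(\lambda)|$ is bounded below by a constant multiple of $|\lambda|^{n-d}$ while $|c_j(\lambda)|$ is bounded above by a constant multiple of $|\lambda|^{n-j}$, each ratio $|c_j(\lambda)/c_d(\lambda)|\,|\mu|^{j-d}$ is dominated by a fixed power of $|\lambda|\le|\Re(\lambda)|+R$ times $e^{-2\rho(d-j)|\Re(\lambda)|}$, which tends to $0$ as $\Re(\lambda)\to-\infty$ uniformly in $m$. Thus there is an $M>0$, independent of $m$, so that \eqref{charact_eq_epsilon} has no root with $\Re(\lambda)\le-M$ and $|\Im(\lambda)|\le R$, and (i) follows.

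For (ii), let $\lambda_{m_k}\to\lambda_*=a_*+ib_*$; such subsequences exist by (i) and $|b_*|\le R$. I would show $a_*=0$ by excluding the other two signs. If $a_*>0$, then $|\mu|\le e^{-a_*/(2\epsilon_{m_k})}\,|\sinhc(\lambda_{m_k}\rho)|\to0$ because $1/\epsilon_{m_k}\to\infty$, so passing to the limit in $\det(\lambda_{m_k}I_n-A-\mu B)=0$ gives $\det(\lambda_* I_n-A)=0$, i.e. $\lambda_*\in\mathcal{A}_{+}$. If $a_*<0$, then $\lambda_*$ is not purely imaginary, so $\sinhc(\lambda_*\rho)\neq0$ and $|\mu|=e^{-\Re(\lambda_{m_k})/\epsilon_{m_k}}|\sinhc(\lambda_{m_k}\rho)|\to\infty$; dividing $P(\lambda_{m_k},\mu)=0$ by $\mu^{d}$ and letting $k\to\infty$ yields $c_d(\lambda_*)=0$, hence $\det(\lambda_* I_{n-d}-A_1)=0$ and $\lambda_*\in\mathcal{A}_{-}$. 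In either case $\lambda_{m_k}$ eventually lies in $\ball_r(\mathcal{A}_{+})$ or $\ball_r(\mathcal{A}_{-})$, hence in $\sigmaeps_s$, contradicting that $\lambda_{m_k}\in\mathcal{P}_c^{\epsilon_{m_k}}$ lies outside the strong spectrum. Therefore $a_*=0$ and $\lim_{k\to\infty}\Re(\lambda_{m_k})=0$.

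The step I expect to be the main obstacle is the $\Re(\lambda)<0$ regime, where (unlike the right half-plane, or the whole fixed-delay setting) the delay factor $\mu$ is unbounded. The work there is to make the dominant-balance estimate $|c_d\mu^{d}|\gg\sum_{j<d}|c_j\mu^{j}|$ quantitative and, crucially, uniform in $\epsilon_m$, which I secure through the $\epsilon$-independent lower bound on $|\mu|$ coming from $1/\epsilon\ge\rho$; and to recognize that the borderline possibility $c_d(\lambda_*)=0$ is precisely what places the limit in $\mathcal{A}_{-}$, thereby contradicting membership in the pseudo-continuous spectrum rather than breaking the estimate.
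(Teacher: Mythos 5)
Your proposal is correct, and its overall skeleton matches the paper's: the right half-plane is controlled by the boundedness of the delay factor (your Laplace-transform bound $|\mu|\le 1$ is the explicit version of the paper's ``bounded spectral radius'' contradiction), and part (ii) proceeds by the same trichotomy on the sign of the limit, ending with the same contradiction that a tail of the subsequence would land in $\ball_r(\mathcal{A}_{\pm})$ and hence in the strong spectrum rather than in $\mathcal{P}_c^{\epsilon}$. Where you genuinely diverge is the treatment of $\Re(\lambda)<0$. The paper handles this case in (ii) with a linear-algebra argument: it takes a unit null vector $v^{(k)}$ of the block matrix in \eqref{eq_444}, multiplies through by $e^{\lambda_{m_k}/\epsilon_{m_k}}\cschc(\lambda_{m_k}\rho)$, and passes to the limit to force $\widebar{B}\,\widebar{v}_d=0$, hence $\widebar{v}_d=0$ and $(\lambda_0 I_{n-d}-A_1)\widebar{v}_{n-d}=0$, placing $\lambda_0\in\mathcal{A}_{-}$; in (i) it only asserts, via \cite[Lemma 7]{lichtner2011spectrum}, that the leading term $\left(e^{-\lambda/\epsilon}\right)^d\sinhc^d(\rho\lambda)\det(\lambda I_{n-d}-A_1)\prod_j\widebar{\lambda}_j$ blows up. You instead run both left-half-plane arguments through a single device: writing $\Delta(\lambda;\epsilon)=\sum_{j=0}^d c_j(\lambda)\mu^j$ with $\mu=e^{-\lambda/\epsilon}\sinhc(\lambda\rho)$, proving the $\epsilon$-uniform lower bound $|\mu|\ge e^{-2\rho\Re(\lambda)}/(4\rho|\lambda|)$ from $1/\epsilon\ge\rho$, and doing a dominant-balance estimate against $c_d(\lambda)=\det(-\widebar{B})\det(\lambda I_{n-d}-A_1)$. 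This buys two things the paper leaves implicit: it makes the Case~2 blow-up argument in part (i) quantitative and manifestly uniform in $m$ (which is the actual crux, since the excluded region must not depend on $\epsilon_m$), and it unifies (i) and (ii) so that the borderline $c_d(\lambda_*)=0$ is exactly the identification $\lambda_*\in\sigma(A_1)$, i.e.\ $\lambda_*\in\mathcal{A}_{-}$. What the paper's eigenvector route buys in exchange is that it avoids any coefficient-degree estimates on the $c_j$ and directly exhibits the eigenvector of $A_1$, mirroring the structure of the proof of Theorem~\ref{thm_append_1}(ii). Both arguments are sound; yours is arguably the tighter write-up of part (i), the paper's the more economical write-up of part (ii).
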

\begin{proof}
	(i) Let $\left\{\lambda_{m}\right\}_{m \in \mathbb{N}} \subset \mathcal{P}_c^{\epsilon_m}$ with $\left|{\rm Im} (\lambda_m)\right| \leq R$. 
	By contradiction, suppose that $\left\{\lambda_{m}\right\}_{m \in \mathbb{N}}$ is unbounded. Then, there exists a subsequence $\left\{\lambda_{q_k}\right\}_{k \in \mathbb{N}}\subset \left\{\lambda_{m}\right\}_{m \in \mathbb{N}}$ such that either
	\begin{enumerate}[series=MyList, label=Case~\arabic*., leftmargin=2cm]
		\item $\lim\limits_{k \rightarrow \infty} {\rm Re}\left\{\lambda_{q_k}\right\}_{k \in \mathbb{N}}=\infty$, or
		\item $\lim\limits_{k \rightarrow \infty} {\rm Re}\left\{\lambda_{q_k}\right\}_{k \in \mathbb{N}}=-\infty$.
	\end{enumerate}
	Case 1 contradicts the boundedness of the spectral radius of
	\[A+B\,e^{-\lambda_{q_k}/\epsilon_{q_k}}\,\sinhc(\lambda_{q_k}\rho)
	= A+B \frac{e^{-\lambda_{q_k}/\epsilon_{q_k}\,(1-\epsilon_{q_k}\rho)}-e^{-\lambda_{q_k}/\epsilon_{q_k}\,(1+\epsilon_{q_k}\rho)}}{2\lambda_{q_k}\rho}.
	\]
	For case 2, similar to Theorem \ref{Thm_App_1}, we use \cite[Lemma 7]{lichtner2011spectrum} and obtain that the leading term of 
	\[
	\Delta(\lambda_{q_k};\epsilon_{q_k})=\det\left(\lambda_{q_k} I_n-A-B\,e^{-\lambda_{q_k}/\epsilon_{q_k}}\,\sinhc(\lambda_{q_k}\rho)\right)
	\]
	is
	\[
	\Lambda_m:=  \left( e^{-\lambda_{q_k}/\epsilon_{q_k}} \right)^d \sinhc^d(\rho\lambda_{q_k})\,\det\left(\lambda_{q_k} I_{n-d}-A_1\right) \prod_{j=1}^d \widebar{\lambda}_j
	\]
	where  $\widebar{\lambda}_j\neq0$ for all $j=1,\ldots,d$ are the eigenvalues of the invertible matrix $\widebar{B}$. Hence, $\Lambda_m\to \pm \infty$ depending on  the value of $n-d$. This implies that $\lambda_m \notin \mathcal{P}_c^{\epsilon_m}$, a contradiction.

	(ii)  Let $\left\{\lambda_{m_k}\right\}_{k \in \mathbb{N}}\subset \mathcal{P}_c^{\epsilon_{m_k}}$ be a subsequence converging to $\lambda_0$.
	If ${\rm Re}(\lambda_0)>0$, we obtain from  $\Delta(\lambda_{m_k};\epsilon_{m_k})=0$ that $\det(\lambda_0 I_n-A)=0$ as $k\to \infty$. Thus, $\lambda_0\in \mathcal{A}_{+}$, and hence,  a tail of the subsequence  $\left\{\lambda_{m_k}\right\}_{k \in \mathbb{N}}$ is in $\mathcal{P}_+^{\epsilon_m}$, which contradicts that fact that $\left\{\lambda_{m_k}\right\}_{k \in \mathbb{N}}\subset \mathcal{P}_c^{\epsilon_m}$.

	Suppose ${\rm Re}(\lambda_0)<0$. Then there exists a vector $v^{(k)}=[v^{(k)}_{n-d},v^{(k)}_{n}]^T \in \mathbb{C}^n$ such that $\left\|v^{(k)}\right\|=1$ and
	\begin{equation}\label{eq_444}
		\begin{bmatrix}
			\lambda_{m_k}  I_{n-d} - A_1 &  -A_2 \\
			-A_3 & \lambda_{m_k}  I_d - A_4 - \widebar{B}\,e^{-\lambda_{m_k}/\epsilon_{m_k}}\,\sinhc(\lambda_{m_k}\rho)
		\end{bmatrix}
		\,
		\begin{bmatrix}
			v^{(k)}_{n-d} \\
			v^{(k)}_{d}
		\end{bmatrix}=\begin{bmatrix}
			0_{n-d} \\
			0_{d}
		\end{bmatrix}  
	\end{equation}
	Multiplying both sides of equation \eqref{eq_444} with $e^{\lambda_{m_k}/\epsilon_{m_k}} \cschc(\lambda_{m_k}\rho)$, where $\cschc(x)=1/\sinhc(x)$, gives
	\[
	\begin{bmatrix}
		(\lambda_{m_k}  I_{n-d} - A_1) e^{\lambda_{m_k}/\epsilon_{m_k}} \cschc(\lambda_{m_k}\rho) &  -A_2 e^{\lambda_{m_k}/\epsilon_{m_k}} \cschc(\lambda_{m_k}\rho)\\
		-A_3 e^{\lambda_{m_k}/\epsilon_{m_k}} \cschc(\lambda_{m_k}\rho) & e^{\lambda_{m_k}/\epsilon_{m_k}}\cschc(\lambda_{m_k}\rho) (\lambda_{m_k}  I_d - A_4) - \widebar{B}
	\end{bmatrix}
	\,
	\begin{bmatrix}
		v^{(k)}_{n-d} \\
		v^{(k)}_{n}
	\end{bmatrix}=\begin{bmatrix}
		0_{n-d} \\
		0_{d}
	\end{bmatrix}
	\]
	As $k\to \infty$ and due to the fact that ${\rm Re}(\lambda_0)<0$, we have 
	\[
	\begin{bmatrix}
		0 &  0 \\
		0 & - \widebar{B}
	\end{bmatrix}
	\,
	\begin{bmatrix}
		\widebar{v}_{n-d} \\
		\widebar{v}_{n}
	\end{bmatrix}=\begin{bmatrix}
		0_{n-d} \\
		0_{d}
	\end{bmatrix}
	\]
	where $\widebar{v}_{s}=\lim\limits_{k \rightarrow \infty} v^{(k)}_{s}$, $s\in\{n-d,d\}$ with $\widebar{v}=\lim\limits_{k \rightarrow \infty} v^{(k)}$
	Thus, $\widebar{B} \widebar{v}_{d}=0_{d}$, and hence, $\widebar{v}_{d}=0_{d}$ due to the invertibility of $\widebar{B}$. 
	Thus we must have $\widebar{v}_{n-d}\neq 0_{n-d}$. Otherwise this contradicts  $\left\|\widebar{v}\right\|=1$. 
	Then, equation \eqref{eq_444} implies that $(\lambda_{0}  I_{n-d} - A_1)\widebar{v}_{n-d}=0_{n-d}$ with $\widebar{v}_{n-d}\neq 0_{n-d}$. Therefore,  $\lambda_0\in \mathcal{A}_{-}$, and hence,  a tail of the subsequence  $\left\{\lambda_{m_k}\right\}_{k \in \mathbb{N}}$ is in $\mathcal{P}_-^{\epsilon_m}$, which contradicts the fact that $\left\{\lambda_{m_k}\right\}_{k \in \mathbb{N}}\subset \mathcal{P}_c^{\epsilon_m}$.
\end{proof}

Now, we establish that as $\epsilon \to 0$, the rescaled pseudo-continuous spectrum $\pieps_c$ approaches the set of curves defined by the asymptotic continuous spectrum $\mathcal{A}_c$.

\begin{theorem}\label{thm_append_2}
	
	\begin{enumerate}[series=MyList,label= ({\roman*})]
		\item For any $\mu \in \mathcal{A}_c$ and $\delta > 0$, there exists $0<\epsilon_0\le 1/\rho$  so that whenever $0 < \epsilon < \epsilon_0$, there exists $\lambda \in \sigmaeps_c$ satisfying $\left|\seps(\lambda) - \mu\right| < \delta$.

		\item Let $R > 0$ and $r$ be defined by \eqref{r_def}. For any $0 < \delta \leq r$, there exists $0<\epsilon_0\le 1/\rho$ so that whenever $0 < \epsilon < \epsilon_0$ and $\lambda \in \sigmaeps_c$ with $|{\rm Im}(\lambda)| < R$, we have $|{\rm Re}(\lambda)| < \delta$ and there exists $\mu \in \mathcal{A}_c$ such that $\left|\seps(\lambda) - \mu\right| < \delta$.
	\end{enumerate}   
\end{theorem}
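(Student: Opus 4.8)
The plan is to prove the two inclusions separately, exploiting the convergence dichotomy of Lemma~\ref{lemma_A_2} and the compactness statement of Lemma~\ref{lemma_A_3}. Part~(i) is a density (surjectivity) statement that I would establish constructively via Hurwitz's theorem, while part~(ii) is a clustering/closedness statement that I would prove by contradiction using a compactness argument. Throughout I would reuse the holomorphic auxiliary families built in the proof of Lemma~\ref{lemma_A_2}.

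For part~(i), fix $\mu=\gamma_0+i\omega_0\in\mathcal{A}_c$. By the definition of $\mathcal{A}_c$ there is $\varphi_0\in\mathbb{R}$ with $p_{\omega_0}\!\left(e^{-(\gamma_0+i\varphi_0)}\right)=0$, so $y_0:=\gamma_0+i\varphi_0$ is an isolated zero of the entire function $y\mapsto p_{\omega_0}(e^{-y})$, which is not identically zero because $i\omega_0\notin\mathcal{A}_0$. For each small $\epsilon$ I would choose $q_\epsilon\in\mathbb{Z}$ with $2\pi\epsilon\,q_\epsilon\to\omega_0$ and use the family
\[\kappa_\epsilon(y)=\det\!\left((\epsilon y+i2\pi\epsilon q_\epsilon)I_n-A-\sinhc\!\big(\rho(\epsilon y+i2\pi\epsilon q_\epsilon)\big)e^{-y}B\right),\]
which converges uniformly on compact sets to $p_{\omega_0}(e^{-y})$. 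Hurwitz's theorem then yields a zero $y_\epsilon=\gamma_\epsilon+i\varphi_\epsilon\to y_0$ of $\kappa_\epsilon$, and the identity $\Delta(\lambda_\epsilon;\epsilon)=\kappa_\epsilon(\gamma_\epsilon+i\varphi_\epsilon)$ shows that $\lambda_\epsilon:=\epsilon y_\epsilon+i2\pi\epsilon q_\epsilon\in\sigmaeps$ with $\seps(\lambda_\epsilon)=\gamma_\epsilon+i(\epsilon\varphi_\epsilon+2\pi\epsilon q_\epsilon)\to\mu$. Finally I would verify $\lambda_\epsilon\in\sigmaeps_c$: since $\operatorname{Re}(\lambda_\epsilon)=\epsilon\gamma_\epsilon\to0$, the point lies off the strong spectrum for small $\epsilon$ because $\mathcal{A}_s$ is a distance $3r$ from $i\mathbb{R}$ by \eqref{r_def}, and $\lambda_\epsilon\notin\mathcal{A}_0$ since $i\omega_0\notin\mathcal{A}_0$; the closure points of $\mathcal{A}_c$ in \eqref{eqq_A0_new} are then reached by a diagonal limit over interior points.

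For part~(ii) I would argue by contradiction. If the conclusion failed there would exist $\epsilon_m\to0$ and $\lambda_m\in\mathcal{P}^{\epsilon_m}_c$ with $|\operatorname{Im}(\lambda_m)|<R$ such that, for every $m$, either $|\operatorname{Re}(\lambda_m)|\ge\delta$ or $\operatorname{dist}(S_{\epsilon_m}(\lambda_m),\mathcal{A}_c)\ge\delta$. By Lemma~\ref{lemma_A_3}(i) the sequence is bounded, so after passing to a subsequence $\lambda_{m_k}\to i\omega_0$, and Lemma~\ref{lemma_A_3}(ii) gives $\operatorname{Re}(\lambda_{m_k})\to0$; this rules out the first alternative and simultaneously proves the claim $|\operatorname{Re}(\lambda)|<\delta$. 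For the second alternative I would feed $\lambda_{m_k}\to i\omega_0$ into Lemma~\ref{lemma_A_2}. Its alternative~(i) gives $\seps(\lambda_{m_k})\to\nu\in\mathcal{A}_c$, so eventually $\operatorname{dist}(S_{\epsilon_{m_k}}(\lambda_{m_k}),\mathcal{A}_c)<\delta$, a contradiction, while alternative~(iv) forces $\lambda_{m_k}=i\omega_0\in\mathcal{A}_0$, contradicting $\lambda_{m_k}\in\mathcal{P}^{\epsilon_m}_c$.

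The main obstacle will be the remaining alternatives~(ii) and~(iii) of Lemma~\ref{lemma_A_2}, where $\seps(\lambda_{m_k})\to\pm\infty$ and $i\omega_0\in\sigma(A)$ (resp.\ $i\omega_0\in\sigma(A_1)$), i.e.\ where a spectral curve $\gamma_\ell$ of Theorem~\ref{theorem_rescaled_pseudo_continuous} has a vertical asymptote at $\omega_0$. Here the rescaled eigenvalue leaves every bounded set, so I cannot simply invoke convergence to a finite point of $\mathcal{A}_c$; instead I must show that $S_{\epsilon_{m_k}}(\lambda_{m_k})$ still tracks the blowing-up branch $\gamma_\ell(\omega)+i\omega$ of $\mathcal{A}_c$ to within $\delta$. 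The plan is to work locally near $\omega_0$ using the factorizations of Lemma~\ref{lemma_A_1} together with the uniform convergence $\kappa_{m_k}\to p_{\omega_0}(e^{-\cdot})$ (resp.\ $\varrho_{m_k}\to q_{\omega_0}(e^{\cdot})$): matching the zero of $\kappa_{m_k}$ that produced $\lambda_{m_k}$ with the root of $p_\omega$ defining $\gamma_\ell$ should give $\gamma_{m_k}=\gamma_\ell(\operatorname{Im}\lambda_{m_k})+o(1)$, so the rescaled point remains within $\delta$ of the curve. Establishing this matching uniformly as $\omega\to\omega_0$ is the delicate step, and it is precisely where the distributed-delay factor $\sinc(\rho\omega)$ must be kept away from its zeros $\omega=k\pi/\rho$, the points already excluded in the definition of $\mathcal{W}$.
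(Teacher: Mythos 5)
Your part (i) and the skeleton of part (ii) follow the paper's own proof essentially step for step: the Hurwitz-theorem construction with the frequency shift $2\pi\epsilon q_\epsilon\to\omega_0$ is exactly the paper's auxiliary family $\Psi_\epsilon$, and your explicit verification that $\lambda_\epsilon$ lies in $\sigmaeps_c$ (small real part versus $\operatorname{dist}(\mathcal{A}_s,i\mathbb{R})\ge 3r$ from \eqref{r_def}, together with $i\omega_0\notin\mathcal{A}_0$) is, if anything, more careful than the paper's, which simply asserts membership in the pseudo-continuous spectrum. Likewise, your contradiction setup in (ii), the use of Lemma \ref{lemma_A_3} for boundedness and vanishing real parts, and the disposal of alternatives (i) and (iv) of Lemma \ref{lemma_A_2} coincide with the paper's argument.

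The genuine gap is the one you flag yourself: alternatives (ii) and (iii) of Lemma \ref{lemma_A_2} are left open, and the route you sketch for closing them --- matching zeros of $\kappa_{m_k}$ to roots of $p_\omega$ so as to prove $\gamma_{m_k}=\gamma_\ell(\operatorname{Im}\lambda_{m_k})+o(1)$ --- is both stronger than what the theorem requires and genuinely delicate near a singularity of $\gamma_\ell$, where no uniform pairing of a zero of $\kappa_{m_k}$ with a particular branch is available. What you actually need is only that $\operatorname{dist}\bigl(S_{\epsilon_{m_k}}(\lambda_{m_k}),\mathcal{A}_c\bigr)\to 0$, and this follows from ingredients you already have, with no zero-matching at all. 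Write $S_{\epsilon_{m_k}}(\lambda_{m_k})=\gamma_{m_k}+i\omega_{m_k}$ with $\omega_{m_k}\to\omega_0$ and, say, $\gamma_{m_k}\to+\infty$. Lemma \ref{lemma_A_2}(ii) also provides a spectral curve with $\gamma_\ell(\omega_0)=+\infty$, and by Theorem \ref{theorem_rescaled_pseudo_continuous}(i) the function $\gamma_\ell$ is continuous into $\mathbb{R}\cup\{-\infty,\infty\}$ and is real-valued on a small punctured neighborhood of $\omega_0$ (the set where it is infinite is discrete). By the intermediate value theorem, for every prescribed $\eta'>0$ and all $k$ large enough there exists $\omega_k^*$ with $|\omega_k^*-\omega_0|\le\eta'$ and $\gamma_\ell(\omega_k^*)=\gamma_{m_k}$; the point $\gamma_\ell(\omega_k^*)+i\omega_k^*$ belongs to $\mathcal{A}_c$ (its real part is finite and $i\omega_k^*\notin\mathcal{A}_0$ since $\mathcal{A}_0$ is discrete), and its distance to $S_{\epsilon_{m_k}}(\lambda_{m_k})$ is $|\omega_{m_k}-\omega_k^*|\le|\omega_{m_k}-\omega_0|+\eta'$, which can be made smaller than $\delta$, contradicting the standing assumption $\operatorname{dist}(S_{\epsilon_m}(\lambda_m),\mathcal{A}_c)\ge\delta$. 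This is precisely the content of the paper's terse sentence ``this behavior implies convergence in $\mathcal{A}_c$''; with it inserted, your argument closes.
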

\begin{proof}
	(i) Let $\mu=\gamma_0+i \omega_0 \in \mathcal{A}_c$. Without loss of generality we assume that $i \omega_0 \notin \mathcal{A}_0$. Let $0<\epsilon\ll 1$ and define
	\[
	\Psi_\epsilon(z):=\det\left(\left(\epsilon z+i  2 \epsilon \pi k(\epsilon)\right)\,I_n-A-B\,\sinhc(\left(\epsilon z+i  2 \epsilon \pi k(\epsilon)\right)\rho)\,e^{-z}\right)
	\]
	where $k(\epsilon):=\left[\frac{\omega_0}{ 2 \epsilon \pi}\right]$ is the largest integer smaller than $\frac{\omega_0}{ 2 \epsilon \pi}$. 
	As $\epsilon \to 0$, we obtain
	\[
	\Psi_\epsilon(z)\rightarrow\Psi_0(z)=\det\left(i\omega_0 \,I_n-A-B\,\sinc(\omega_0\rho)\,e^{-z}\right)
	\]
	locally uniformly on $\mathbb{C}$. 
	Since $i \omega_0 \notin \mathcal{A}_0$, the function $\Psi_0$ is nontrivial. By assumption there exists $\varphi_0 \in \mathbb{R}$ such that $\Psi_0(z_0)=0$ where $z_0=\gamma_0+i \varphi_0$. 
	Choose $0<\eta\ll 1$ such that 
	\[
	\widehat{\ball}_{\eta}(z_0)=\left\{ z\,:\, |z-z_0|\le \eta\,\text{and}\, \Psi_0(z)=0  \right\}=\{z_0\}.
	\]
	Note that $\widehat{\ball}_{\eta}(z_0)$ contains only the root $z_0$ of $\Psi_0(z)=0$. Using Hurwitz's theorem \cite{gamelin2003complex} and the same arguments as in Theorem \ref{thm_append_1}, there exists a sufficiently small $0<\epsilon<\epsilon_0\le 1/\rho$ such that $\Psi_0$ and $\Psi_\epsilon$ have the same number of zeros, counting multiplicity, in $\ball_{\eta}(z_0)$.
	Suppose  $z_\epsilon=\gamma_\epsilon+i\varphi_\epsilon$ is such a zero. By Hurwitz's Theorem, $z_\epsilon\rightarrow z_0$. Then, $\lambda_\epsilon=\epsilon z_\epsilon+i  2 \epsilon \pi k(\epsilon) \in \mathcal{P}_c^\epsilon$ because  
	$e^{-z_{\epsilon}}=e^{-\lambda_{\epsilon}/\epsilon}$ and $\lambda_\epsilon\rightarrow i\omega_0$ as $\epsilon\rightarrow 0$. 
	Note that $S_\epsilon\left(\lambda_\epsilon\right)=\gamma_\epsilon+i\epsilon \varphi_\epsilon+i  2 \epsilon \pi k(\epsilon)$. 
	Let $\delta>0$. Then we can choose $\eta>0$ and $0<\epsilon<\epsilon_0\le 1/\rho$ sufficiently small such that $\left|S_\epsilon\left(\lambda_\epsilon\right)-\mu\right|<\delta$.

	(ii) Assume for contradiction that there exist $R_0 > 0$ and $\delta_0>0$ such that for every $0<\epsilon<\epsilon_0\le 1/\rho$, there exists $\epsilon \in\left(0, \epsilon_0\right)$ and $\lambda \in \mathcal{P}_c^{\epsilon}$ with $|{\rm Im}(\lambda)|<R_0$ for which either
	$|{\rm Re}(\lambda)| \geq \delta_0$ or $\left|S_\epsilon(\lambda)-\mu\right|\geq\delta_0$  for all $\mu \in A_c$. 
	Consequently, we construct a positive sequence $\epsilon_m \in\left(0, \frac{1}{m}\right)$ for $m \in \mathbb{N}$ such that $\lambda_m \in \mathcal{P}_c^{\epsilon_m}$  with $|{\rm Im}(\lambda_m)|<R_0$  for which either 
	\begin{enumerate}[series=MyList, label=Case~\arabic*., leftmargin=2cm]
		\item $|{\rm Re}(\lambda_m)| \geq \delta_0$  for all  $m \in \mathbb{N}$, or
		\item $\left|S_{\epsilon_m}(\lambda_m)-\mu\right|\geq\delta_0$  for all $\mu \in \mathcal{A}_c$ and $m \in \mathbb{N}$.
	\end{enumerate}
	Note that $\lim\limits_{m\to\infty}\epsilon_m=0$. Case 1 contradicts Lemma \ref{lemma_A_3} that the sequence $\lambda_m$ has a convergent subsequence $\left\{\lambda_{q_k}\right\}_{k \in \mathbb{N}}$ such that $\lim\limits_{k \rightarrow \infty} {\rm Re}\left\{\lambda_{q_k}\right\}_{k \in \mathbb{N}}=0$.
	For case 2, note that the boundedness of the imaginary parts $\left|{\rm Im} (\lambda_m)\right| < R_0$ and Lemma \ref{lemma_A_3} imply that there exists a subsequence $\left\{\lambda_{m_k}\right\}_{k \in \mathbb{N}}$ converging to some $i \omega_0 \in \mathbb{C}$. Consequently, Lemma \ref{lemma_A_2} implies four cases: Lemma \ref{lemma_A_2}(i) provides that the rescaled subsequence $\left\{S_{\epsilon_{m_k}}\left(\lambda_{m_k}\right)\right\}_{k \in \mathbb{N}}$ is converging to $\mathcal{A}_c$; therefore, there exist $\mu\in $ and  $K\in\mathbb{N}$ such that  $\left|S_{\epsilon_{m_k}}(\lambda_{m_k})-\mu\right|<\delta_0$ for all $k\ge K$, which contradicts the assumption that  $\left|S_{\epsilon_m}(\lambda_m)-\mu\right|\geq\delta_0$; 
	Lemma \ref{lemma_A_2}(ii) and (iii) imply that the rescaled subsequence diverges to $\pm\infty$. Moreover, there exists a spectral curve $\gamma_l$, with $l \in \{1, \ldots, d\}$, in $\mathcal{A}_c$ that has a singularity at $i\omega_0$ in the sense that $\gamma_l(\omega_0)=\pm\infty$. This behavior implies convergence in $\mathcal{A}_c$, which contradicts the assumption that
	$\left|S_{\epsilon_m}(\lambda_m)-\mu\right|\geq\delta_0$  holds for all $\mu \in \mathcal{A}_c$;
	and 
	Lemma \ref{lemma_A_2}-(iv) provides that the subsequence stabilizes at $i \omega_0 \in \mathcal{A}_0$, that is  $\lambda_{m_k}=\lambda_{m_{k+1}}=i \omega_0$  for all $k \in \mathbb{N}$, which contradicts 
	$\lambda_m\in \sigmaeps_c=\sigmaeps \backslash \left(\sigmaeps_s  \medcup \mathcal{A}_0\right)$. 
\end{proof}

\begin{remark}\label{rem_rhodep}
	Theorem~\ref{thm_append_1} shows that for finite $\gavg$, large enough, each eigenvalue in the spectrum, $\mathcal{P}_s^\epsilon$, is well approximated by an eigenvalue of the asymptotic spectrum, ${\mathcal A}_s$. Similarly, each eigenvalue in the rescaled pseudo-continuous spectrum, $\mathcal{R}^\epsilon_c$, is well approximated by an eigenvalue of the asymptotic continuous spectrum, ${\mathcal A}_c$, by Theorem~\ref{thm_append_2}.
	In the application of these theorems to specific models, we will explore the effect of the half-width of the distribution, $\rho$, on the eigenvalue spectrum and the validity of these approximation. Clearly, how large $\gavg$ needs to be for these theorems to hold will depend on $\rho$. For example, the proof of Theorem~\ref{thm_append_1} requires that for fixed $\rho$, $\epsilon$ must be small enough that $\cschc((z+\lambda)\rho)=O(1)$ with respect to $\epsilon$.
\end{remark}

\end{document}